\newtheorem{theorem}{Theorem}[section]
\newtheorem{lemma}[theorem]{Lemma}
\newtheorem{proposition}[theorem]{Proposition}
\newtheorem{corollary}[theorem]{Corollary}
\newtheorem{letterthm}{Theorem}
\theoremstyle{definition}
\newtheorem{definition}[theorem]{Definition}
\theoremstyle{remark}
\newtheorem{remark}[theorem]{Remark}
\newcommand{\G}{\mathcal{G}}
\newcommand{\Q}{\mathbb{Q}}
\newcommand{\R}{\mathbb{R}}
\newcommand{\Z}{\mathbb{Z}}
\newcommand{\wh}{\widehat}
\newcommand{\tr}{\mathrm{tr}}
\newcommand{\curlyH}{\mathcal{H}}
\newcommand{\dcup} {\mathinner{\cup \mkern -8.7mu \rlap{\raise 0.6ex\hbox{.}}\mkern 8.7mu}}
\title{Distinguishing geometries using finite quotients}
\author{Henry Wilton and Pavel Zalesskii}
\begin{document}

\maketitle

\begin{abstract}
We prove that the profinite completion of the fundamental group of a compact 3-manifold $M$ satisfies a Tits alternative: if a closed subgroup $H$ does not contain a free pro-$p$ subgroup for any $p$, then $H$ is virtually soluble, and furthermore of a very particular form.  In particular, the profinite completion of the fundamental group of a closed, hyperbolic 3-manifold does not contain a subgroup isomorphic to $\widehat{\Z}^2$.   This gives a profinite characterization of hyperbolicity among irreducible 3-manifolds.  We also characterize Seifert fibred 3-manifolds as precisely those for which the profinite completion of the fundamental group has a non-trivial procyclic normal subgroup.  Our techniques also apply to hyperbolic, virtually special groups, in the sense of Haglund and Wise.  Finally, we prove that every finitely generated pro-$p$ subgroup of the profinite completion of a torsion-free, hyperbolic, virtually special group is free pro-$p$.
\end{abstract}

In a heuristic commonly used to distinguish 3-manifolds $M$ and $N$, one computes all covers $M_1,\ldots,M_m$ of $M$ and $N_1,\ldots, N_n$ of $N$ up to some small degree, and then compares the resulting finite lists $(M_i)$ and $(N_j)$. If they can be distinguished, then one has a proof that $M$ and $N$ were not homeomorphic.

It is natural to ask whether this method always works.  A more precise question was formulated by Calegari--Freedman--Walker in \cite{calegari_positivity_2010}, who asked whether the fundamental group of a 3-manifold is determined by its finite quotients, or equivalently, by its profinite completion.  (A standard argument shows that two finitely generated groups have the same set of finite quotients if and only if their profinite completions are isomorphic.) Bridson, Conder and Reid have answered the corresponding question for Fuchsian groups positively \cite{bridson_determining_2014}, while  Long and Reid have given a positive answer to a related question \cite{long_grothendieck_2011}.  We refer the reader to Section 8 of \cite{reid_profinite_2013} for a discussion of this and related problems.

Funar \cite{funar_torus_2013} used work of Stebe \cite{stebe_conjugacy_1972b} to exhibit Sol-manifolds that answer the Calegari--Freedman--Walker question in the negative, and more recently Hempel has exhibited Seifert fibred examples \cite{hempel_some_2014}.  Nevertheless, because of the effectiveness of the above heuristic, it remains natural to ask how much information about a given 3-manifold is contained in the profinite completion of its fundamental group.

Our first theorem shows that the hyperbolicity of a closed 3-manifold $M$ is determined by this profinite completion.  Recall that the Sphere theorem and the Kneser--Milnor decomposition imply that a closed, orientable 3-manifold $M$ is aspherical if and only if $M$ is irreducible and its fundamental group is infinite.

\begin{letterthm}\label{thm: Profinite hyperbolization}
Let $M$ be a closed, orientable, aspherical 3-manifold. Then $M$ is hyperbolic if and only if the profinite completion $\widehat{\pi_1M}$ does not contain a subgroup isomorphic to $\wh{\Z}^2$.
\end{letterthm}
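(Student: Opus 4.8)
The plan is to combine the profinite Tits alternative advertised in the abstract with the geometric input provided by Geometrization. Recall the topological translation first: for a closed, orientable, aspherical 3-manifold, the work of Perelman and Thurston shows that $M$ is hyperbolic if and only if $\pi_1 M$ is word-hyperbolic, and that this in turn is equivalent to $\pi_1 M$ containing no copy of $\Z^2$. The theorem is therefore a profinite upgrade of this dichotomy: I must convert the presence of $\Z^2 \le \pi_1 M$ into the presence of $\wh{\Z}^2 \le \wh{\pi_1 M}$, and, far more delicately, convert its absence into the absence of $\wh{\Z}^2$.

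For the direction ``$M$ not hyperbolic $\Rightarrow$ $\wh{\Z}^2 \le \wh{\pi_1 M}$'' I would proceed as follows. If $M$ is not hyperbolic then its geometric decomposition contains either an incompressible torus or a Seifert fibred piece; in every case one exhibits an explicit $\Z^2 \le \pi_1 M$ carried by an embedded torus (the fibre together with a cyclic subgroup of the base orbifold group, or a JSJ torus). It then remains to check that the closure of this $\Z^2$ in $\wh{\pi_1 M}$ is genuinely all of $\wh{\Z}^2$, i.e.\ that the natural map $\wh{\Z}^2 \to \wh{\pi_1 M}$ is injective. This is where I would invoke the efficiency of the geometric decomposition at the profinite level: the splitting of $\pi_1 M$ as a graph of groups induces a graph of profinite groups whose edge groups $\wh{\Z}^2$ embed, so the torus subgroup is fully separable and contributes an honest $\wh{\Z}^2$.

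The substantive direction is ``$M$ hyperbolic $\Rightarrow$ $\wh{\Z}^2 \not\le \wh{\pi_1 M}$'', which is precisely the ``in particular'' clause of the abstract. Suppose for contradiction that $H \cong \wh{\Z}^2$ is a closed subgroup of $\wh{\pi_1 M}$. Being abelian, $H$ contains no nonabelian free pro-$p$ subgroup for any prime $p$, so the Tits alternative applies and forces $H$ to be virtually soluble of the special form identified by the structure theorem. The crux is that, since $M$ is hyperbolic, $\pi_1 M$ is word-hyperbolic and hence contains no $\Z^2$; correspondingly the soluble subgroups that the structure theorem permits must collapse to the virtually procyclic ones. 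But $\wh{\Z}^2$ is manifestly \emph{not} virtually procyclic, since no open subgroup of $\wh{\Z}^2$ is procyclic, and this yields the desired contradiction.

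I expect the main obstacle to be exactly this last step: excluding the $\wh{\Z}^2$ case among the ``very particular'' soluble subgroups produced by the Tits alternative. Topologically, $\Z^2$ subgroups are detected by the flat (Euclidean) pieces of the geometric decomposition, and hyperbolicity eliminates these; what I need is the profinite analogue, namely that any closed virtually soluble subgroup of $\wh{\pi_1 M}$ localizes inside the closure of a single vertex group of the profinite decomposition, and that a hyperbolic vertex group admits only virtually procyclic closed soluble subgroups. Securing this control — that a soluble subgroup cannot spread across the edges of the profinite decomposition and must sit in one hyperbolic piece — is the technical heart of the argument, and it is precisely what the structure theorem underlying the Tits alternative is built to supply.
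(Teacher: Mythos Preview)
Your approach to the substantive direction (hyperbolic $\Rightarrow$ no $\wh{\Z}^2$) is essentially the paper's: invoke the structure theorem for closed subgroups without free non-abelian pro-$p$ subgroups (Theorem~D, the specialization of the Tits alternative to hyperbolic virtually special groups, applicable via Agol's theorem), and observe that $\wh{\Z}^2$ cannot have the form $\Z_\pi\rtimes\Z_\rho$ with $\pi\cap\rho=\varnothing$.  Since $\wh{\Z}^2$ is abelian, such an isomorphism would give $\wh{\Z}^2\cong\Z_\pi\times\Z_\rho=\Z_{\pi\cup\rho}$, which is procyclic---the contradiction you reach.  Your phrase ``collapse to the virtually procyclic ones'' slightly overstates the conclusion of Theorem~D for general soluble $H$, but is correct for abelian $H$, which is all that is needed here.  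One point of clarification: the ``profinite decomposition'' you describe in your final paragraph is not the JSJ---a closed hyperbolic $M$ has trivial JSJ---but the malnormal quasiconvex hierarchy coming from Agol and Wise, and the localization argument you sketch is the content of the \emph{proof} of Theorem~D, not an additional step in deducing Theorem~A from it.

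For the converse direction the paper takes a shorter route than you propose.  Rather than arguing via efficiency of the geometric decomposition, it simply cites Hamilton's theorem that abelian subgroups of 3-manifold groups are separable, so that any $\Z^2\le\pi_1 M$ (guaranteed by the Hyperbolization theorem when $M$ is not hyperbolic) closes up to a genuine $\wh{\Z}^2\le\wh{\pi_1 M}$.  Your efficiency argument is fine when the $\Z^2$ is carried by a JSJ torus, but as written it does not cover closed Seifert fibred or Sol manifolds, where the JSJ is trivial and the $\Z^2$ lives inside the single piece; you would need a separate separability argument there, which is exactly what Hamilton's result supplies uniformly.
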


This can be thought of as a profinite analogue of the Hyperbolization theorem, which asserts that $M$ is hyperbolic if and only if $\pi_1M$ does not contain a subgroup isomorphic to $\mathbb{Z}^2$.  This is one case of the Geometrization theorem, proved by Perelman \cite{perelman_entropy_2002,perelman_ricci_2003,perelman_finite_2003} (see also \cite{kleiner_notes_2008,morgan_ricci_2007,morgan_geometrization_2014}). We also have the following profinite characterization of Seifert fibred 3-manifolds.

\begin{letterthm}\label{thm: Profinite Seifert conjecture}
Let $M$ be a closed, orientable, aspherical 3-manifold. Then $M$ is Seifert fibred if and only if the profinite completion $\wh{\pi_1M}$ has a non-trivial procyclic normal subgroup.
\end{letterthm}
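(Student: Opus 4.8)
The plan is to prove both implications of Theorem~B by translating the geometric/group-theoretic structure of Seifert fibred manifolds into profinite terms, using the known fact that Seifert fibred fundamental groups sit in a central (or nearly central) extension with a cyclic kernel coming from the fibre.

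\bigskip

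\noindent\textbf{The forward direction (Seifert fibred $\Rightarrow$ procyclic normal subgroup).}
Suppose $M$ is closed, orientable, aspherical and Seifert fibred. Then $\pi_1 M$ fits into a short exact sequence
\[
1 \to \Z \to \pi_1 M \to \pi_1^{\mathrm{orb}}(\mathcal{O}) \to 1,
\]
where the kernel $\Z$ is generated by a regular fibre and $\mathcal{O}$ is the base orbifold. Since $M$ is aspherical the base orbifold is hyperbolic or Euclidean, and in all cases the fibre subgroup is central of finite index in its normalizer — in fact, because $M$ is closed and aspherical, the Euler number or the orbifold data forces the fibre class to be genuinely central in a finite-index subgroup, and one checks the centralizer is the whole group up to finite index. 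I would first record that $\widehat{\pi_1 M}$ inherits this structure: profinite completion is right exact and, because the fibre subgroup is central and the quotient is good in the sense of Serre (being Fuchsian, hence good by Grothendieck rigidity / the work of Bridson--Conder--Reid type arguments), the sequence stays exact after completion, giving
\[
1 \to \wh{\Z} \to \wh{\pi_1 M} \to \wh{\pi_1^{\mathrm{orb}}(\mathcal{O})} \to 1
\]
with $\wh{\Z}$ central, hence normal and procyclic. The main point to verify here is exactness on the left, i.e.\ that the fibre $\Z$ injects into the completion with closure $\wh{\Z}$; this follows from the fact that $\pi_1 M$ is residually finite and the fibre is a nontrivial normal cyclic subgroup whose intersection with finite-index subgroups is detected, so its closure is procyclic and normal (indeed central) in $\wh{\pi_1 M}$.

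\bigskip

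\noindent\textbf{The converse (procyclic normal subgroup $\Rightarrow$ Seifert fibred).}
This is the harder and more interesting direction, and it is where I expect the main obstacle to lie. Suppose $\wh{\pi_1 M}$ contains a nontrivial procyclic normal subgroup $C$. By Geometrization, the aspherical closed orientable $M$ is either hyperbolic, Seifert fibred, Sol, or has a nontrivial JSJ/geometric decomposition along tori. I would argue by elimination. First, by Theorem~A, if $M$ were hyperbolic then $\wh{\pi_1 M}$ would contain no copy of $\wh{\Z}^2$; more strongly, one uses the Tits alternative for $\wh{\pi_1 M}$ (the main structural theorem of the paper) to see that a hyperbolic group's profinite completion cannot possess a nontrivial procyclic normal subgroup, since its centralizer would force a soluble-by-finite normal structure incompatible with the acylindrical/relatively-hyperbolic behaviour of the completion. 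The same Tits-alternative machinery rules out the genuinely mixed case: if $M$ has a nontrivial graph-of-groups decomposition with hyperbolic pieces, then $\wh{\pi_1 M}$ acts on the profinite Bass--Serre tree of the induced decomposition, and a normal procyclic subgroup must fix the tree or a point, forcing the decomposition to be degenerate (one vertex, Seifert or Sol type). Thus the existence of $C$ forces $M$ to be geometric and of Seifert or Sol type.

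\bigskip

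\noindent\textbf{Disposing of Sol and closing the argument.}
It remains to separate genuine Seifert manifolds from Sol-manifolds, since a Sol-manifold also has a nontrivial procyclic (indeed $\wh{\Z}$) normal structure coming from the fibre of its torus-bundle. Here the point is that for a Sol-manifold the base $\Z$ acts on the fibre $\Z^2$ by an Anosov (hyperbolic) automorphism, so no power of any element of the fibre is central, and in the completion the corresponding $\wh{\Z}$, while normal, has the property that conjugation acts with no nontrivial invariants — I would show that this prevents $\wh{\pi_1 M}$ from having a \emph{procyclic} normal subgroup of the required central type, or alternatively observe that the unique candidate normal $\wh{\Z}$ in a Sol-group is not normalized with procyclic quotient in the way Seifert geometry demands. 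Concretely, I would compute the normal procyclic subgroups of $\wh{\pi_1 M}$ directly from the Sol group presentation $\wh{\Z}^2 \rtimes_A \wh{\Z}$ and show that the hyperbolicity of $A$ obstructs existence of a nontrivial procyclic normal subgroup (its centralizer in $\wh{\Z}^2$ is trivial). This is the most delicate profinite computation, because one must rule out exotic closed subgroups that have no analogue in the discrete group; controlling these requires the goodness of $\pi_1 M$ and the injectivity of $\pi_1 M \to \wh{\pi_1 M}$ on the relevant subgroups. Once Sol is excluded, the only remaining geometry carrying a nontrivial procyclic normal subgroup is Seifert fibred geometry, completing the converse.
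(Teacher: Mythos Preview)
Your overall architecture matches the paper's: the forward direction uses the fibre subgroup, and the converse proceeds by Geometrization, ruling out the hyperbolic and non-geometric cases via the acylindrical action on a profinite tree, and disposing of Sol by a direct computation in $\wh{\Z}^2\rtimes_A\wh{\Z}$. Two points, however, need correction.

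For the forward direction you overcomplicate matters. Goodness of the base orbifold and exactness of the completed sequence are not needed: it suffices that the cyclic fibre subgroup $Z$ is separable in $\pi_1 M$ (Scott showed every finitely generated subgroup of a Seifert fibred manifold group is separable), so its closure $\overline{Z}$ in $\wh{\pi_1 M}$ is already a nontrivial procyclic normal subgroup. In the hyperbolic and non-geometric cases your description is correct in outline but vague; the precise mechanism is that a closed normal subgroup with no non-abelian free pro-$p$ subgroup must lie in a vertex stabilizer of the profinite Bass--Serre tree, and a normal subgroup contained in a single vertex stabilizer then acts trivially on the whole tree, contradicting acylindricity unless it is trivial.

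The genuine gap is the Sol case. The statement needed is that $\wh{\Z}^2\rtimes_A\wh{\Z}$ has \emph{no} nontrivial procyclic normal subgroup --- not merely no central one, so your remarks about ``central type'' miss the target. You correctly isolate one ingredient: the Anosov matrix $A$ has no nonzero fixed vector in $\wh{\Z}^2$ (since $\det(A-I)\neq 0$ over every $\Q_p$). But this only handles procyclic normal subgroups meeting $\wh{\Z}^2$; if a procyclic normal $C$ has $C\cap\wh{\Z}^2=0$, then $C$ injects into the base $\wh{\Z}$, and normality forces its image to lie in the kernel of the action map $\wh{\Z}\to GL_2(\wh{\Z})$. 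To conclude you must show this kernel is trivial, i.e.\ that the closure of $\langle A\rangle$ in $GL_2(\wh{\Z})$ is all of $\wh{\Z}$. The paper does this by observing that each $GL_2(\Z_p)$ is virtually pro-$p$, so the closure of the infinite cyclic $\langle A\rangle$ in $GL_2(\Z_p)$ has infinite pro-$p$ Sylow for every prime $p$; hence the procyclic closure in $GL_2(\wh{\Z})$ has every Sylow subgroup infinite and is therefore $\wh{\Z}$. Your invocation of goodness here is a red herring --- the argument is a direct computation in $GL_2$ over the profinite integers.
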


The Seifert conjecture, which follows from the Geometrization theorem but was proved earlier by Casson--Jungreis \cite{casson_convergence_1994} and Gabai \cite{gabai_convergence_1992}, building on work of Mess \cite{mess_seifert_1987} and Tukia \cite{tukia_homeomorphic_1988}, asserts that if $\pi_1M$ is infinite then $M$ is Seifert fibred if and only if $\pi_1M$ has an infinite cyclic normal subgroup.

In fact, the Geometrization theorem is equivalent to the
Hyperbolization theorem and the Seifert conjecture, together with
the Elliptization theorem, which asserts that $M$ is spherical if
and only if $\pi_1M$ is finite \cite{scott_geometries_1983}. Since
3-manifold groups are residually finite
\cite{hempel_residual_1987}, $\pi_1M$ is finite if and only if its
profinite completion is, so there is no distinct profinite
analogue of the Elliptization theorem.  Theorems \ref{thm:
Profinite hyperbolization} and \ref{thm: Profinite Seifert
conjecture} can therefore be thought of as providing a complete
profinite analogue of the Geometrization theorem (although one
should note, of course, that our proofs rely essentially on
Geometrization). In Theorem \ref{thm: Geometry}, we proceed to
show that the profinite completion of the fundamental group
detects the geometry of a closed, orientable, irreducible
3-manifold.

Alternatively, Theorem \ref{thm: Profinite hyperbolization} can be thought of as a classification result for the abelian subgroups of profinite completions of fundamental groups of hyperbolic manifolds.  In fact, we prove a much more general `Tits alternative' for profinite completions of 3-manifold groups.  We use the notation $\Z_\pi$ to denote $\prod_{p\in \pi} \Z_p$. 

\begin{letterthm}\label{thm: 3M Tits alternative}
If $M$ is any compact 3-manifold and $H$ is a closed subgroup of $\wh{\pi_1M}$ that does not contain a free non-abelian pro-$p$ subgroup for any prime $p$ then $H$ is on the following list:
\begin{enumerate}
\item $H$ is conjugate to the completion of a virtually soluble
subgroup of $\pi_1M$; or \item $H$ is isomorphic to
$((\Z_\sigma\times \Z_\pi)\rtimes C)$, where $\pi$ and $\sigma$
are (possibly empty) sets of primes with $\pi\cap
\sigma=\varnothing$ and $C$ is procyclic (possibly
finite).
\end{enumerate}
\end{letterthm}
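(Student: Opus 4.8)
The plan is to reduce to the geometric pieces of $M$ through its two canonical decompositions and then to study how $H$ acts on the resulting profinite Bass--Serre trees. First I would make two harmless reductions: passing to the orientable double cover replaces $\pi_1M$ by an index-$2$ subgroup without affecting the conclusion (both alternatives being stable under the mild commensurability bookkeeping permitted by allowing $C$ to be finite), and the non-aspherical prime summands contribute only a free profinite factor and completions of spherical pieces, which are handled directly. One is then left to analyse a closed subgroup of a free profinite product whose factors are completions of closed or bounded \emph{irreducible aspherical} pieces. Here $\widehat{\pi_1M}$ acts on a profinite tree with trivial edge stabilisers and vertex stabilisers the conjugates of the factors, and the structure theory of closed subgroups of free profinite products forces a subgroup with no free nonabelian pro-$p$ subgroup to be either procyclic---already case (2) with $\pi=\sigma=\varnothing$---or conjugate into a single factor. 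This reduces everything to the case that $M$ is prime, irreducible and aspherical.

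For such $M$ I would invoke the JSJ decomposition, whose profinite analogue is available because the decomposition is \emph{efficient}: $\widehat{\pi_1M}$ is the fundamental group of a finite profinite graph of groups with edge groups $\widehat{\Z}^2$ (the JSJ tori) and vertex groups the completions of the Seifert fibred and the atoroidal---hence, by Geometrization, finite-volume hyperbolic---pieces. The crux of the argument, and the step I expect to be the main obstacle, is a fixed-point dichotomy for the induced action of $H$ on the profinite Bass--Serre tree: a closed subgroup containing no free nonabelian pro-$p$ subgroup must either fix a vertex or stabilise a line, that is, a minimal invariant subtree on which it acts through a procyclic quotient. Because profinite trees are not inverse limits of simplicial trees in the naive sense, the classical Bass--Serre fixed-point arguments have to be replaced by their profinite counterparts, and one must simultaneously control the possible failure of exactness when completing edge and vertex groups. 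In the line-stabilising case the stabiliser is an extension of a procyclic group by a closed subgroup of an edge group $\widehat{\Z}^2$; a prime-by-prime eigenspace analysis of the procyclic action---exactly as for the monodromy underlying a Sol- or Nil-structure---then yields the shape $(\Z_\sigma\times\Z_\pi)\rtimes C$ of case (2), the disjointness $\pi\cap\sigma=\varnothing$ reflecting that the invariant submodule is at most one-dimensional at each prime.

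It remains to classify the closed subgroups of a single vertex group. For a Seifert fibred piece $N$ the completion sits in a central extension $1\to\widehat{\Z}\to\widehat{\pi_1N}\to\widehat{\pi_1^{\mathrm{orb}}\Sigma}\to 1$ of the completion of a Fuchsian orbifold group, whose closed subgroups with no free pro-$p$ subgroup are procyclic; hence such an $H$ is procyclic-by-procyclic and again lies in case (2). For a hyperbolic piece $N$ the group $\pi_1N$ is torsion-free, hyperbolic relative to its cusps, and virtually special, so by the theorem that its finitely generated pro-$p$ subgroups are free pro-$p$ any closed subgroup avoiding free nonabelian pro-$p$ subgroups is elementary, hence procyclic or conjugate into a peripheral $\widehat{\Z}^2$; both are completions of virtually soluble subgroups of $\pi_1N$ and so lie in case (1). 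Reassembling the vertex and line-stabiliser cases along the tree action yields the stated list.
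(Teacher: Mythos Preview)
Your overall architecture---Kneser--Milnor, then JSJ, then vertex-by-vertex---matches the paper's.  But two of the technical steps you rely on do not work as stated.

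First, your ``fixed-point dichotomy'' on the profinite JSJ tree is both misstated and under-powered.  The general structure theorem for a closed subgroup $H$ with no free non-abelian pro-$p$ subgroup acting on a profinite tree (Theorem~\ref{profinite}) does \emph{not} say that $H$ acts on its minimal subtree through a procyclic quotient: the quotient $L=H/K$ can be $\Z_\pi\rtimes\Z_\rho$, profinite dihedral, or profinite Frobenius.  More importantly, you never control the kernel $K$.  The paper does this by invoking the \emph{acylindricity} of the profinite JSJ action (established in \cite{wilton_profinite_2010,hamilton_separability_2013}): once the action is $k$-acylindrical, Remark~\ref{cases} gives $K=1$ and $H\cong L$ directly.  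Your alternative route---observing that $K$ lies in an edge group $\widehat{\Z}^2$ and then doing an eigenspace analysis of a procyclic action---founders because $L$ need not be procyclic, so the extension $1\to K\to H\to L\to 1$ is not of the shape you assume.

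Second, for a hyperbolic vertex group $N$ your appeal to Theorem~\ref{thm: pro-p virtually special} is a non-sequitur.  That theorem concerns \emph{finitely generated pro-$p$} subgroups of $\widehat{\pi_1N}$; your $H$ is an arbitrary closed subgroup, and there is no reason its Sylow pro-$p$ subgroups are finitely generated.  (Indeed, Theorem~\ref{thm: pro-p virtually special} is proved in the paper \emph{after} Theorem~\ref{thm: 3M Tits alternative}, using additional machinery.)  The paper instead handles the hyperbolic pieces by producing a hierarchy whose corresponding profinite tree action is $1$-acylindrical: for closed hyperbolic pieces this follows from the profinite malnormality theorem (Theorem~\ref{thm: Malnormal closure}), and for cusped pieces it requires the relative version (Theorem~\ref{thm: Profinite relative malnormality}) proved via Dehn filling.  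With acylindricity in hand, one again applies Theorem~\ref{profinite} and Remark~\ref{cases} inductively down the hierarchy.

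In short: the missing idea is acylindricity.  It is what converts the soft dichotomy of Theorem~\ref{profinite} into the precise structural conclusion you want, and establishing it for the relevant profinite trees (via profinite malnormality of edge groups) is the real content of the argument.
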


Note that virtually soluble 3-manifold groups are classified, so the first possibility is well understood \cite[Theorem 1.11.1]{aschenbrenner_manifold_2015}.  For the second possibility, there are in fact additional constraints on the structure of $H$, depending on the geometry of $M$.  We refer the reader to Theorem \ref{profinite} and Proposition \ref{prop: SF Tits alternative} for details.

The proof of Theorem \ref{thm: Profinite hyperbolization} uses the dramatic recent developments of Agol \cite{agol_virtual_2013}, Kahn--Markovic \cite{kahn_immersing_2012} and Wise \cite{wise_structure_2012}, whose work implies that the fundamental groups of closed hyperbolic 3-manifolds are also fundamental groups of compact, virtually special cube complexes (see \cite{aschenbrenner_manifold_2015} for a summary of these developments).  Indeed, we prove the following theorem.

\begin{letterthm}\label{thm: Hyp special}
Let $G$ be hyperbolic, virtually special group. If $H$ is a closed
subgroup of $\wh{G}$ that does not contain a free non-abelian
pro-$p$ subgroup for any $p$ then $H$ is virtually isomorphic to
$\Z_\pi\rtimes\Z_\rho$, where $\pi$ and $\rho$ are disjoint
(possibly empty) sets of primes. If $G$ is torsion free, then
`virtually' can be omitted.
\end{letterthm}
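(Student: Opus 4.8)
The plan is to pass to a group with a hierarchy and induct on its length, studying closed subgroups through their action on the associated profinite trees. Since the conclusion is only required up to finite index, I first replace $G$ by a finite-index special subgroup $G_0$; this is harmless because $H\cap\wh{G_0}$ has finite index in $H$. As $G_0$ is hyperbolic and special, the combined work of Agol and Wise supplies a quasiconvex hierarchy: a finite sequence of splittings of $G_0$, and of the successive vertex and edge groups, as finite graphs of groups whose edge groups are quasiconvex, hence hyperbolic and virtually special of strictly smaller complexity, terminating in elementary groups. Because quasiconvex subgroups of hyperbolic virtually special groups are separable, each such splitting is efficient, so by profinite Bass--Serre theory (Ribes--Zalesskii) the completion $\wh{G_0}$ acts on the profinite tree $\wh{T}$ of the splitting, with vertex and edge stabilizers the completions of the abstract vertex and edge groups, up to conjugacy.

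Now let $H\le\wh{G_0}$ be closed with no free non-abelian pro-$p$ subgroup for any $p$, and consider its action on $\wh{T}$. The first key input is a profinite Tits alternative for tree actions: such an $H$ must either fix a vertex of $\wh{T}$ or stabilize a line. If $H$ fixes a vertex it lies, after conjugation, in the completion of a vertex group of strictly smaller hierarchy complexity, and I conclude by the inductive hypothesis. The base case is $G_0$ elementary, where $\wh{G_0}$ is virtually $\wh{\Z}$ and every closed subgroup is virtually $\Z_\pi=\Z_\pi\rtimes\Z_\varnothing$, the disjointness being vacuous.

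The remaining case, in which $H$ stabilizes a line $\ell\subseteq\wh{T}$, is the crux and the step I expect to be the main obstacle. Here a loxodromic element of $H$ translates $\ell$, and $H$ normalizes the pointwise stabilizer $K$ of $\ell$, which is contained in an edge stabilizer. The mechanism is that, by hyperbolicity, the translating element lies in a maximal virtually cyclic subgroup $E$ of the relevant abstract edge-or-vertex group, and $H$ should be virtually contained in the closure of $E$ in $\wh{G_0}$. The hard part is to transport the virtually-cyclic-centralizer statement from $G_0$ to its completion, that is, to show that this closure is virtually $\wh{\Z}\cong\prod_p\Z_p$ and that $H$ is virtually a closed subgroup of it; this is a profinite centralizer computation, and it is precisely here that the cohomological goodness and separability afforded by the virtually special structure are indispensable. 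Granting this, $H$ is virtually procyclic, the pointwise part $K$ and the translation image are procyclic, $\cong\Z_\pi$ and $\cong\Z_\rho$, and the extension is $\Z_\pi\rtimes\Z_\rho$. Since $H$ is virtually $\wh{\Z}$, every pro-$p$ Sylow subgroup is procyclic of rank one; a prime lying in both $\pi$ and $\rho$ would produce a rank-two pro-$p$ subgroup, a contradiction, which forces $\pi\cap\rho=\varnothing$ (and the possible nontrivial, finite-order inversion accounts for the semidirect rather than direct product).

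Finally, when $G$ is torsion free I upgrade the virtual isomorphism to an isomorphism on the nose. The hierarchy may then be chosen with torsion-free, hence infinite cyclic, elementary pieces, so every edge- and vertex-stabilizer completion occurring above is torsion free; since an extension of torsion-free profinite groups is torsion free, both $K\cong\Z_\pi$ and the translation quotient $\cong\Z_\rho$, and therefore $H\cong\Z_\pi\rtimes\Z_\rho$, contain no nontrivial finite subgroup, so the word ``virtually'' may be deleted.
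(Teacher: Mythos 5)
Your proposal follows the paper's overall template (pass to a finite-index subgroup with a hierarchy, induct on its length, and apply the profinite Tits alternative to the action on the standard profinite tree), but it has a genuine gap exactly at the step you flag as the crux, and the mechanism you sketch for closing it would not work. The paper's key move is to use not merely a quasiconvex hierarchy but a \emph{malnormal} quasiconvex hierarchy (Theorem \ref{thm: Hierarchies}), and then to prove that malnormality of quasiconvex subgroups persists in the profinite completion (Theorem \ref{thm: Malnormal closure}, proved via Haglund--Wise's canonical completion and retraction). This makes the action of the completion on the standard profinite tree 1-acylindrical (Lemma \ref{lem: Acylindrical action}), so in Zalesskii's Theorem \ref{profinite} the kernel $K$ of the action on the minimal invariant subtree is trivial (Remark \ref{cases}); combined with torsion-freeness of the completion (goodness, Proposition \ref{prop: VS good}), the dihedral and Frobenius cases are excluded and $H_0\cong\Z_\pi\rtimes\Z_\rho$ follows at once. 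Without malnormality and the resulting acylindricity, Theorem \ref{profinite} only constrains $H/K$, and $K$ --- contained in edge stabilizers, which are completions of arbitrary quasiconvex subgroups --- can be enormous; your proposal has no means of controlling it.

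Moreover, the mechanism you propose for the ``line'' case is incoherent in the profinite setting: an element of $\wh{G}_0$ acting loxodromically on the profinite tree need not be (or be conjugate to, or otherwise related to) any element of the abstract group $G_0$, so the assertion that ``by hyperbolicity the translating element lies in a maximal virtually cyclic subgroup $E$ of the relevant abstract edge-or-vertex group'' has no meaning; and the conclusion you would extract --- that $H$ is virtually procyclic --- is strictly stronger than the theorem and false in general, since $\Z_\pi\rtimes\Z_\rho$ with $\pi$ infinite and nontrivial action need not be virtually abelian, and such subgroups genuinely occur. Two further, smaller gaps: (i) torsion-freeness of $\wh{G}_0$ and of the vertex and edge completions does not follow from torsion-freeness of the abstract groups (profinite completions of torsion-free groups can contain torsion); the paper needs cohomological goodness (Proposition \ref{prop: VS good}). (ii) Removing ``virtually'' is not just the observation that extensions of torsion-free groups are torsion-free: one needs that a torsion-free profinite group containing $\Z_\pi\rtimes\Z_\rho$ as an \emph{open} subgroup is itself of this form (Remark \ref{cases}), which rests on Serre's theorem on cohomological dimension and the structure of soluble projective profinite groups.
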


As well as hyperbolic 3-manifold groups, many other classes of hyperbolic groups are now known to be virtually special: word-hyperbolic Coxeter groups \cite{haglund_coxeter_2010}; $C'(1/6)$ small-cancellation groups \cite{wise_cubulating_2004,agol_virtual_2013}); random groups at density less than 1/6 \cite{ollivier_cubulating_2011,agol_virtual_2013}); and one-relator groups with torsion \cite{wise_structure_2012}.  Most of these examples are torsion-free, with the exception of Coxeter groups and one-relator groups. In the one-relator case,  we can improve our classification.

\begin{letterthm}\label{thm: 1-relator}
Let $G$ be a one-relator group with torsion and $H$ a closed
subgroup of $\wh{G}$ that does not contain a free non-abelian
pro-$p$ subgroup for any $p$. Then $H$ is virtually soluble and
has one of the following forms:
\begin{enumerate} \item
$H\cong\Z_\pi\rtimes\Z_\rho$ where $\pi$ and $\rho$ are disjoint
sets of primes; \item $H$ is a profinite dihedral group
$\Z_\pi\rtimes C_2$; \item $H$ is a profinite Frobenius group
$\Z_\pi\rtimes C_n$, i.e.\ the order of every prime divisor $p$ of
$n$ divides $q-1$ for some $q\in\pi$ and the centralizers of
nonidentity elements of $C_n$ coincide with $C_n$.
\end{enumerate}
\end{letterthm}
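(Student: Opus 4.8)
The plan is to leverage Theorem \ref{thm: Hyp special} as the starting point, since a one-relator group with torsion is hyperbolic and virtually special, so any closed subgroup $H$ of $\wh{G}$ not containing a free non-abelian pro-$p$ subgroup is already known to be virtually isomorphic to $\Z_\pi\rtimes\Z_\rho$ for disjoint sets of primes $\pi,\rho$. The task is therefore to refine this structural conclusion using the specific features of one-relator groups with torsion. The crucial extra input is the classical theorem (due to Newman, the ``spelling theorem'' circle of results) that a one-relator group with torsion has the relator a proper power $w^n$ with $n\geq 2$, and such groups are \emph{virtually torsion-free with very restricted finite subgroups}: every finite subgroup is cyclic and conjugate into the cyclic group $\langle w\rangle$ of order $n$. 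I would first record this, together with the fact that such $G$ is hyperbolic, to pin down the torsion.

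\medskip

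First I would analyse the abstract structure of a profinite group $H$ that is virtually $\Z_\pi\rtimes\Z_\rho$ and is virtually soluble (indeed virtually abelian of rank at most two in the relevant sense). The second possibility in Theorem \ref{thm: Hyp special} with $\rho$ nonempty gives the semidirect product $\Z_\pi\rtimes\Z_\rho$ directly, accounting for case (1). The remaining work is to understand the finite ``virtually'' overhead: I would pass to the normal procyclic subgroup $A\cong\Z_\pi$ (or a finite-index analogue) and study the finite quotient $H/A$ acting on $A$. Because every finite subgroup of $G$ is cyclic and every finite subgroup of $\wh{G}$ lies in a conjugate of the closure of such a finite cyclic group (finite subgroups of the profinite completion of a virtually special hyperbolic group are controlled by the finite subgroups of $G$ itself, via the fact that $G$ is good and its finite subgroups inject), the complement $C$ acting on $\Z_\pi$ must itself be cyclic, say $C_n$. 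This forces $H\cong \Z_\pi\rtimes C_n$, and the entire classification reduces to determining which faithful-or-not actions of a finite cyclic group $C_n$ on $\Z_\pi$ can occur.

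\medskip

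Next I would split into cases according to the action of $C_n$ on $\Z_\pi=\prod_{p\in\pi}\Z_p$. If the action is trivial, $H$ is abelian and falls under case (1) with $\rho$ empty. If $n=2$ and $C_2$ acts by inversion $x\mapsto x^{-1}$, one obtains the profinite infinite dihedral group $\Z_\pi\rtimes C_2$, which is case (2). In the remaining genuinely nonabelian cases with $n\geq 2$, I would argue that the action must be fixed-point-free on each $\Z_p$ factor in the appropriate sense: a nonidentity element of $C_n$ whose centralizer in $H$ strictly contained $C_n$ would produce a nontrivial invariant procyclic subgroup on which $C_n$ acts trivially, contradicting the structure forced by the geometry (specifically by the fact that maximal virtually cyclic subgroups of $G$ give rise to these Frobenius actions when the torsion element normalizes an infinite cyclic subgroup). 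The condition that every prime divisor $p$ of $n$ divides $q-1$ for some $q\in\pi$ is precisely the arithmetic requirement for $C_n$ to embed in $\mathrm{Aut}(\Z_q)=\Z_q^\times$ acting fixed-point-freely, yielding the profinite Frobenius group of case (3).

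\medskip

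The main obstacle I anticipate is the transition from the ``virtually'' statement of Theorem \ref{thm: Hyp special} to the precise semidirect product decomposition, in particular showing that the finite overhead can always be absorbed into a cyclic complement $C_n$ acting on a normal $\Z_\pi$, rather than producing more complicated extensions. This requires careful control of the finite subgroups of $\wh{G}$ and their normalizers, using that one-relator groups with torsion have all finite subgroups cyclic and that this property is inherited by the profinite completion through goodness and cohomological-dimension arguments. The delicate point is ruling out the possibility that the extension $1\to\Z_\pi\to H\to C_n\to 1$ fails to split, or that $C_n$ acts with partial fixed points giving a group outside the three listed forms; the Frobenius condition in case (3) is exactly what survives, and verifying that no other actions are realizable inside $\wh{G}$ is where the geometry of one-relator groups with torsion (via their action on the Bass--Serre tree of the associated cyclic-vertex splitting, or via Newman's structural results) does the essential work.
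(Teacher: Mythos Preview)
Your approach is genuinely different from the paper's, and the step you flag as ``the main obstacle'' is a real gap that you do not close.

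The paper does \emph{not} start from Theorem~\ref{thm: Hyp special} and then try to refine the ``virtually $\Z_\pi\rtimes\Z_\rho$'' conclusion via an analysis of finite subgroups and extensions. Instead, it works with the Magnus--Moldavanskii hierarchy for $G$ directly (embedding $G$ into $G'=G*\Z$, which is an HNN extension of a shorter one-relator group, terminating at $\Z/n * F$). The point is that this hierarchy is quasiconvex, so by Theorem~\ref{thm: GMRS} and Corollary~\ref{cor: Finite width closure} the edge group has finite width in $\wh{G'}$; hence the stabilizer in $\wh{G'}$ of any infinite subtree of the associated profinite tree is trivial. In particular the kernel $K$ in Theorem~\ref{profinite} vanishes, and that theorem applied to $H$ itself (not to a finite-index subgroup) yields the three listed forms immediately, with induction handling the case that $H$ fixes a vertex.

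Your route, by contrast, first passes to the torsion-free finite-index subgroup implicit in Theorem~\ref{thm: Hyp special}, obtaining only that some open $H_0\le H$ is $\Z_\pi\rtimes\Z_\rho$, and then attempts to reconstruct the precise three-case trichotomy by analysing the extension $1\to H_0\to H\to H/H_0\to 1$. Knowing that finite subgroups of $G$ (and hence of $\wh G$) are cyclic does not by itself force $H$ to have a normal copy of $\Z_\pi$ with a \emph{cyclic} complement, nor does it show the extension splits, nor does it pin down the action as trivial, inversion, or Frobenius. You acknowledge exactly these points as the obstacle but do not supply the argument. The paper sidesteps all of this: by arranging an action of $H$ itself on a profinite tree with trivial kernel, the trichotomy is the raw output of Theorem~\ref{profinite}, and no extension analysis is needed.
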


We then turn to analyse the pro-$p$ subgroups of $\widehat{G}$, whose structure turns out to be amazingly restricted: they are all free whenever $G$ is torsion-free, hyperbolic and virtually special (such as the fundamental group of a hyperbolic 3-manifold).

\begin{letterthm}\label{thm: pro-p virtually special}
Let $G$ be a torsion-free, hyperbolic, virtually special group (such as the fundamental group of a compact hyperbolic 3-manifold).  Any finitely generated pro-$p$ subgroup $H$ of $\widehat G$ is free pro-$p$.
\end{letterthm}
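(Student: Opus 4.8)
The plan is to prove the equivalent cohomological statement that $\mathrm{cd}_p(H)\le 1$, where $\mathrm{cd}_p$ denotes the mod-$p$ cohomological dimension; by the pro-$p$ analogue of the Stallings--Swan theorem, a pro-$p$ group has $\mathrm{cd}_p\le 1$ if and only if it is free pro-$p$. The first thing I would record is a consequence of Theorem \ref{thm: Hyp special} that drives the whole argument. Since $G$ is torsion free, any closed soluble pro-$p$ subgroup $S\le\wh G$ contains no free non-abelian pro-$p$ subgroup, so Theorem \ref{thm: Hyp special} gives $S\cong\Z_\pi\rtimes\Z_\rho$ with $\pi\cap\rho=\varnothing$. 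As $S$ is pro-$p$ we have $\pi,\rho\subseteq\{p\}$, and disjointness forces $S\cong\Z_p$ or $S=1$. In particular $\wh G$ contains no copy of $\Z_p^2$ and no non-abelian $\Z_p\rtimes\Z_p$, since both would require $\pi=\rho=\{p\}$. This is the profinite incarnation of the fact that a hyperbolic group contains no Baumslag--Solitar or $\Z^2$ subgroups, and it is exactly what excludes the two-dimensional pro-$p$ Poincar\'e-duality subgroups that obstruct freeness.

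Next I would reduce to the case that $G$ is special. Let $G_0\le G$ be a finite-index special subgroup; then $\wh{G_0}$ is open in $\wh G$ and $H_0:=H\cap\wh{G_0}$ is open, hence of finite index, in the pro-$p$ group $H$. Because torsion-free hyperbolic virtually special groups are good and of finite cohomological dimension, $\wh G$ is torsion free, so $H$ is a torsion-free pro-$p$ group. By Serre's theorem $\mathrm{cd}_p(H)=\mathrm{cd}_p(H_0)$, so it suffices to bound $\mathrm{cd}_p(H_0)$; thus we may assume that $G$ is special, and in particular a quasiconvex, hence hyperbolic, subgroup of a right-angled Artin group.

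The core of the proof is an induction along a quasiconvex hierarchy of $G$ in the sense of Wise \cite{wise_structure_2012, agol_virtual_2013}: a finite sequence of splittings of (finite-index subgroups of) $G$ as finite graphs of groups over quasiconvex, almost malnormal edge groups, terminating in free groups $F$, for which $\mathrm{cd}_p(\wh F)\le 1$. After checking that these splittings are efficient, $\wh G$ acts on a profinite tree with vertex and edge stabilizers the closures of the $G_v$ and $G_e$; restricting the action to $H$ and applying profinite Bass--Serre theory expresses $H$ as the pro-$p$ fundamental group of a finite graph of pro-$p$ groups whose vertex and edge groups are the intersections $H\cap\wh{G_v}^{\,g}$ and $H\cap\wh{G_e}^{\,g}$. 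These are pro-$p$ subgroups of completions of strictly lower complexity in the hierarchy, hence free pro-$p$ by the inductive hypothesis, so the pro-$p$ Mayer--Vietoris sequence yields $\mathrm{cd}_p(H)\le 1$ as soon as all the edge groups are trivial.

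The main obstacle is therefore to show that the edge groups $H\cap\wh{G_e}^{\,g}$ vanish. A nontrivial such group is free pro-$p$ and soluble, hence procyclic $\cong\Z_p$, and gluing two free pro-$p$ vertex groups (or a single one to itself) along it threatens to build a non-free quotient such as a profinite Baumslag--Solitar group $\Z_p\rtimes\Z_p$. I would exclude this using almost malnormality of the discrete edge groups $G_e$: translated to the profinite tree it should yield an acylindricity of the $H$-action which, in the presence of a nontrivial procyclic edge stabilizer, forces inside $H$ a copy of $\Z_p^2$ or of a non-abelian $\Z_p\rtimes\Z_p$. Either one contradicts the first step, so every edge stabilizer is trivial, $H$ is a free pro-$p$ product of free pro-$p$ groups, and hence free pro-$p$. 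I expect the hard part to be precisely this last implication --- passing correctly from almost malnormality of the edge groups in the discrete group to triviality of the pro-$p$ edge stabilizers --- together with the bookkeeping needed to keep the hierarchy splittings efficient at each stage and to ensure that the relevant stabilizers are finitely generated, so that the graph of pro-$p$ groups is genuinely finite.
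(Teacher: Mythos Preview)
Your overall architecture---reduce to a finite-index subgroup with a malnormal quasiconvex hierarchy, show the action on the associated profinite tree is $1$-acylindrical, then induct---matches the paper. However, there is a genuine gap at the step you present as routine, and the step you flag as hard is misdiagnosed.

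The sentence ``restricting the action to $H$ and applying profinite Bass--Serre theory expresses $H$ as the pro-$p$ fundamental group of a finite graph of pro-$p$ groups'' is the real problem. No such structure theorem is available: a closed subgroup of a profinite fundamental group acting on the standard profinite tree need not decompose as the fundamental group of a (finite) graph of groups in the way discrete Bass--Serre theory guarantees. Obtaining such a decomposition for a finitely generated pro-$p$ group acting $1$-acylindrically is precisely the content of the paper's Theorem~\ref{pro-p acting on profinite}, and its proof is not a citation but a genuine argument: one first shows (Lemma~\ref{prosoluble}) that if an edge stabilizer is \emph{properly} contained in both adjacent vertex stabilizers, then the subgroup they generate surjects onto a free prosoluble amalgam and hence cannot be pro-$p$; combined with $1$-acylindricity this forces the subgraph of points with nontrivial stabilizer to consist of stars, and a limiting argument through Frattini quotients $G/\widetilde{\Phi^n(G)}$ (each virtually free pro-$p$, so amenable to known decomposition results) then yields the free pro-$p$ product structure.

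Your proposed mechanism for killing edge stabilizers---that a nontrivial $\Z_p$ edge stabilizer together with acylindricity forces a copy of $\Z_p^2$ or a non-abelian $\Z_p\rtimes\Z_p$ inside $H$---does not work as stated. Already $H=\Z_p$ fixing a single edge is a counterexample, and more substantially, the paper's argument shows edge stabilizers are \emph{not} in general trivial: a nontrivial $H_e$ must coincide with one of the adjacent vertex stabilizers, and it is only the \emph{other} edges at that vertex that are forced to have trivial stabilizer. So the obstruction you isolate (absence of $\Z_p^2$ in $\wh G$, which is indeed a consequence of Theorem~\ref{thm: Hyp special}) is a correct observation but is not what drives the proof; the operative input is the prosoluble-amalgam lemma, not the classification of soluble pro-$p$ subgroups.
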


Also, if $G$ has torsion, then a finitely generated pro-$p$ subgroup $H$ of $\widehat G$ is virtually free pro-$p$ and therefore, by the main result of \cite{herfort_virtually_2013}, admits a a pro-$p$ analog of Karras--Pietrowsky--Solitar's description, i.e.\  as the pro-$p$ fundamental group of a finite graph of finite $p$-groups
(see Corollary \ref{cor:pro-p virtually special}).

\medskip

We conclude this introduction by outlining the structures of the proofs in this paper. Since Theorem \ref{thm: Profinite hyperbolization} follows immediately from Theorem \ref{thm: Hyp special}, we will first illustrate the proof of Theorem \ref{thm: Hyp special} by sketching the proof that the profinite completion of a hyperbolic virtually special group cannot contain a subgroup isomorphic to $\wh{\Z}^2$. 

Any hyperbolic virtually special group $G$ has a subgroup $G_0$ of finite index that admits a malnormal quasiconvex hierarchy (see Definition \ref{defn: MQH}).  There is a well known geometric proof that hyperbolic groups cannot contain subgroups isomorphic to $\mathbb{Z}^2$,  but one can give an alternative proof of this fact for hyperbolic virtually special groups using the malnormal hierarchy, as follows: since the edge groups in the hierarchy are malnormal, the corresponding action of $G_0$ on the Bass--Serre tree is 1-acylindrical (see Definition \ref{def: Acylindrical}); but $\Z^2$ does not admit an acylindrical action on a tree, so any $\Z^2$ subgroup of $G_0$ is conjugate into a lower level of the hierarchy, and we conclude by induction.  Note that, in this proof, only the malnormality of the hierarchy was used; this makes it suitable for translation to the profinite setting, where malnormality makes sense.

The proof of Theorem \ref{thm: Hyp special} is a profinite analogue of this argument.  Classical Bass--Serre theory is replaced by the theory of groups acting on profinite trees (see Sections \ref{sec: Profinite trees} and \ref{sec: Profinite gogs}).  As in the discrete setting, the key fact needed is that the edge stabilizers should be malnormal.    This is provided by the following theorem, which is of independent interest.  (See Theorem \ref{thm: Malnormal closure} for a more comprehensive statement.)

\begin{letterthm}\label{thm: Profinite malnormality intro}
Let $G$ be a hyperbolic virtually special group and let $H$ be a quasiconvex, almost malnormal subgroup.  Then $\wh{H}$ is also almost malnormal in $\wh{G}$.
\end{letterthm}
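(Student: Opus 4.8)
The plan is to leverage the Haglund--Wise canonical retraction, which exhibits a quasiconvex subgroup of a special group as a retract of a finite-index subgroup, and to convert almost malnormality of $\wh{H}$ into a statement about finiteness of centralizers inside $\wh{H}$. First I would reduce to the case where $G$ itself is special: the hypotheses on $H$ and the conclusion are insensitive to passing to a finite-index subgroup $G_1\leq G$ (replacing $H$ by $H\cap G_1$), since $\wh{G_1}$ is open in $\wh{G}$ and almost malnormality is invariant under this kind of commensurability; this bookkeeping is routine. Once $G$ is special, the canonical retraction supplies a finite-index subgroup $K$ with $H\leq K$ together with a continuous homomorphic retraction $\rho\colon K\to H$, so that $K=\ker\rho\rtimes H$. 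Completing, I obtain $\wh{K}=N\rtimes\wh{H}$, where $N=\ker\wh{\rho}$ is the closure of $\ker\rho$, with $\wh{H}\cap N=\{1\}$ and $\wh{H}\leq\wh{K}$.

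The core computation is then the following. Taking the normal core of $K$, it suffices to check almost malnormality for $g\in\wh{K}$ (the finitely many remaining double cosets are dealt with uniformly). For such $g$ write $g=\wh{\rho}(g)\,c$ with $\wh{\rho}(g)\in\wh{H}$ and $c\in N$; then $\wh{H}^{\,g}=\wh{H}^{\,c}$, and $g\in\wh{H}$ if and only if $c=1$. Using $N\trianglelefteq\wh{K}$ and $\wh{H}\cap N=\{1\}$, a short calculation shows that $x\in\wh{H}$ lies in $\wh{H}^{\,c}$ precisely when $x$ commutes with $c$, so that $\wh{H}\cap\wh{H}^{\,g}=C_{\wh{H}}(c)$, the stabilizer of $c$ for the conjugation action of $\wh{H}$ on $N$. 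Thus the whole theorem reduces to the assertion that $C_{\wh{H}}(c)$ is finite whenever $1\neq c\in N$ (and trivial when $G$ is torsion-free).

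The discrete shadow of this assertion is exactly the almost malnormality of $H$ in $G$, hence in $K$: for $1\neq n\in\ker\rho$ one has $C_H(n)=H\cap H^{\,n}$ finite, so the conjugation action of $H$ on $\ker\rho$ has finite (indeed uniformly bounded) point stabilizers. The main obstacle is upgrading this to the profinite action, i.e. ruling out that the profinite stabilizer $C_{\wh{H}}(c)$ is strictly larger than the closure of the discrete stabilizers and becomes infinite. I expect to resolve this using separability: the finite subgroups $H\cap H^{\,n}$, and more generally products and conjugates of the quasiconvex subgroup $H$, are separable by the double-coset separability of quasiconvex subgroups of virtually special groups (Haglund--Wise). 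This should force the profinite stabilizer of $c$ to be detected at finite levels by the discrete stabilizers, whence $C_{\wh{H}}(c)$ is finite. Controlling the profinite stabilizer in this way — showing that no infinite obstruction can appear in the limit that is invisible in $G$ — is the crux, and is precisely where the finer properties of the canonical retraction established in the combination theorem of Haglund and Wise will be needed.
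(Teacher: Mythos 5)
Your reduction is correct as far as it goes: with a retraction $\rho\colon K\to H$ one does get $\wh{K}=N\rtimes\wh{H}$ with $N=\ker\wh{\rho}$, and the computation $\wh{H}\cap\wh{H}^{\,g}=C_{\wh{H}}(c)$ (using $\wh{H}\cap N=1$ and normality of $N$) is right. But this is a reformulation of the theorem, not a reduction of its difficulty, and the step you defer --- finiteness of $C_{\wh{H}}(c)$ for $1\neq c\in N$ --- is exactly the whole content. The mechanism you propose for it cannot work. Separability of the subgroups $H\cap H^{\,n}$ is vacuous (finite subgroups of a residually finite group are automatically closed), and double-coset separability is a statement about the discrete group: it gives no control over $C_{\wh{H}}(c)$ when $c\in N\smallsetminus\ker\rho$, where there is no discrete stabilizer to approximate, and even for $c=n\in\ker\rho$ the inclusion $\overline{C_{H}(n)}\subseteq C_{\wh{H}}(n)$ can a priori be strict --- density of discrete centralizers in profinite centralizers is itself a hard theorem in the situations where it is known (cf.\ the use of \cite{minasyan_one-relator_2013,minasyan_hereditary_2012} in the last section of the paper) and is not a consequence of separability. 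The structural reason a single fixed retraction cannot succeed is that it does not see the conjugator: if $\hat{\delta}=\hat{\epsilon}^{\hat{\gamma}}$ with $\hat{\delta},\hat{\epsilon}\in\wh{H}$, applying $\wh{\rho}$ only yields $\hat{\delta}\in\wh{H}\cap\wh{H}^{\wh{\rho}(\hat{\gamma})}$, which is no contradiction at all, since $\wh{\rho}(\hat{\gamma})\in\wh{H}$.

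What the paper uses instead is a family of retractions tailored to the conjugator. Given a finite quotient $q\colon G\to Q$ with $\hat{q}(\hat{\gamma})\notin q(H)$ and $G_1=\ker q$, Theorem \ref{thm: Canonical completion} (whose proof rests on the wall-projection triviality results of \cite{haglund_combination_2012}, i.e.\ Theorem \ref{thm: Trivial wall projections}, and not merely on the existence of the canonical retraction from \cite{haglund_special_2008}) produces a finite-index subgroup $G_0\subseteq G_1$ and a retraction $\rho\colon G_0\to H\cap G_1$ with the key extra property $\rho(H^{\gamma}\cap G_0)=1$ whenever $\gamma\notin HG_1$. Since $\overline{HG_1}$ is clopen and misses $\hat{\gamma}$, continuity gives $\hat{\rho}(\hat{\delta}^{\,n})=1$ for a suitable power $\hat\delta^{\,n}\in\wh{G}_0$, while $\hat{\rho}$ fixes $\hat{\delta}^{\,n}$; hence $\hat{\delta}$ is torsion, and torsion-freeness of $\wh{G}$ (goodness, Proposition \ref{prop: VS good}) finishes. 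This ``killing of conjugates'' is the ingredient missing from your argument; once you import it, your centralizer reformulation becomes unnecessary. A secondary point: your reduction to conjugators $g\in\wh{K}$ via the normal core is not uniform bookkeeping --- handling the remaining cosets requires proving malnormality simultaneously for the whole \emph{family} of conjugates of $H$ intersected with the core, which is why the paper states Theorem \ref{thm: Malnormal closure} for malnormal families and needs item 2 of Theorem \ref{thm: Canonical completion}.
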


Theorem \ref{thm: Profinite malnormality intro} is proved using detailed properties of the \emph{canonical completion} for immersions of special cube complexes, developed by Haglund and Wise in \cite{haglund_special_2008} and \cite{haglund_combination_2012} (see Section \ref{sec: Canonical completion} for details).  This completes our sketch of the proof of Theorem \ref{thm: Hyp special} (and hence Theorem \ref{thm: Profinite hyperbolization}).

The above argument implies in particular that, after passing to a finite-index subgroup, the profinite completion of the fundamental group of a closed hyperbolic 3-manifold admits an acylindrical action on a profinite tree (see Definition \ref{def: Profinite acylindrical}).  In fact, the same holds true for every closed 3-manifold $M$ which is neither Seifert fibred nor a Sol-manifold: if $M$ is reducible then the profinite tree is induced by the Kneser--Milnor decomposition, and if $M$ is irreducible and not geometric then the profinite tree is induced by the JSJ decomposition.   The bulk of the proof of Theorem \ref{thm: Profinite Seifert conjecture} consists of these observations, together with a result (Proposition \ref{prop:normal}) which implies that a profinite group acting acylindrically on a profinite tree cannot have a non-trivial procyclic normal subgroup.  Again, this is the profinite analogue of a well known lemma from classical Bass--Serre theory. A separate argument is needed to show that profinite completions of Sol-manifolds do not admit non-trivial procyclic normal subgroups.  With Theorem \ref{thm: Profinite Seifert conjecture} in hand, we go on to show that the profinite completion of the fundamental group of a geometric 3-manifold determines its geometry (see Theorem \ref{thm: Geometry}).

As in the case of Theorem \ref{thm: Hyp special}, the proof of Theorem \ref{thm: 3M Tits alternative} is by induction on a suitable hierarchy.  By applying first the (profinite) Kneser--Milnor decomposition and then the (profinite) JSJ decomposition, the theorem is reduced to the cases of Seifert-fibred and hyperbolic manifolds, possibly with cusps.  The main difficulty at this point is provided by hyperbolic manifolds with cusps; as in the closed case, one needs to know that (after passing to a finite-sheeted cover) there is a suitable hierarchy in which the corresponding actions on profinite trees are profinitely acylindrical.  We resolve this difficulty with a combinatorial Dehn filling argument (see Subsection \ref{subsec: Cusp}), which reduces the problem to the setting of hyperbolic virtually special groups, where Theorem \ref{thm: Profinite malnormality intro} applies.

The proof of Theorem \ref{thm: 1-relator} amounts to a careful analysis of the hierarchy used by Wise to show that one-relator groups with torsion are virtually special.  Finally, the main additional ingredient of Theorem \ref{thm: pro-p virtually special} is Theorem \ref{pro-p acting on profinite}, of interest in its own right, which asserts that a pro-$p$ group acting 1-acylindrically on a profinite tree actually splits as a free (pro-$p$) product.

\subsection*{Acknowledgements}

The authors would like to thank Martin Bridson, Daniel Groves, Jason Manning, and especially Alan Reid, who suggested Theorems \ref{thm: Profinite hyperbolization} and \ref{thm: Profinite Seifert conjecture}.  {The authors would also like to thank Gareth Wilkes for pointing out the error in the proof of \cite[Lemma 4.7]{hamilton_separability_2013}, and Emily Hamilton for assistance with the corrected proof.  (See Lemma \ref{lem: Peripheral profinite malnormality} below.)}  The first author is supported by the EPSRC and the second author by CNPq and CAPES.

\section{Hierarchies for virtually special groups}

In \cite{haglund_special_2008}, Haglund and Wise defined a non-positively curved cube complex to be \emph{special} if its hyperplanes do not exhibit a certain finite list of pathologies (they are not one-sided, and do not self-intersect, directly self-osculate, or inter-osculate). The reader is referred to that paper for details, but we will not need them here.

We will call a group $G$ \emph{virtually special} if some finite-index subgroup is the fundamental group of a compact, special cube complex.  (Note that, in some other contexts, the compactness hypothesis is omitted.  This leads to a different class of groups.)

Since the hyperplanes in a special cube complex are embedded and two-sided, one can cut along them to obtain a hierarchical decomposition of the fundamental group.  This can be thought of as a more geometric version of the hierarchy admitted by a Haken 3-manifold.  Indeed, in the word-hyperbolic case, results of Haglund and Wise characterize virtually special groups as precisely those that admit a sufficiently well behaved hierarchy.

In order to state their results, we first need to describe the sorts of hierarchies that we are interested in.

\begin{definition}
The class of word-hyperbolic groups with a \emph{quasiconvex hierarchy} is the smallest class of groups, closed under isomorphism, that contains the trivial group, and such that, if
\begin{enumerate}
\item $G=A*_CB$ and $A$, $B$ each have a quasiconvex hierarchy, or
\item $G=A*_C$ and $A$ has a quasiconvex hierarchy,
\end{enumerate}
and $C$ is quasiconvex in $G$, then $G$ also has a quasiconvex hierarchy.
\end{definition}

The \emph{quasiconvex} subgroups referred to above are the `geometrically well behaved' subgroups of a word-hyperbolic group.  The following, algebraic, notion of `good behaviour' will also concern us.

\begin{definition}
A subgroup $H\subseteq G$ is called \emph{malnormal} if
\[
H^\gamma\cap H=1
\]
whenever $\gamma\notin H$. In groups $G$ with torsion, it is often more appropriate to consider \emph{almost malnormal} subgroups: a subgroup $H$ is almost malnormal if $H^\gamma\cap H$ is finite whenever $\gamma\notin H$; in particular, if $G$ is torsion-free then almost malnormal subgroups are malnormal.
\end{definition}

It is frequently also useful to generalize this definition to families of subgroups: a family $\mathcal{H}=\{H,\ldots,H_n\}$ of subgroups of a group $G$ is called \emph{malnormal} if
\[
H_i\cap H_j^\gamma\neq 1\Rightarrow i=j\mathrm{~and~}\gamma\in H_i
\]
for any $\gamma\in G$ and any indices $i,j$.   As above, one may also relax the definition of a malnormal family of subgroups to obtain a notion of an \emph{almost malnormal} family of subgroups.

We can use this definition to define a more restrictive notion of hierarchy.

\begin{definition}\label{defn: MQH}
The class of word-hyperbolic groups with a \emph{malnormal quasiconvex hierarchy} is the smallest class of groups, closed under isomorphism, that contains the trivial group, and such that, if
\begin{enumerate}
\item $G=A*_CB$ and $A$, $B$ each have a malnormal  quasiconvex hierarchy, or
\item $G=A*_C$ and $A$ has a malnormal quasiconvex hierarchy,
\end{enumerate}
and $C$ is malnormal and quasiconvex in $G$, then $G$ also has a malnormal quasiconvex hierarchy.
\end{definition}

With these definitions in hand, we can summarize some deep results of Haglund, Hsu and Wise \cite{haglund_combination_2012,hsu_cubulating_2015,wise_structure_2012} in the following theorem.

\begin{theorem}[Haglund, Hsu and Wise]\label{thm: Hierarchies}
Let $G$ be a word-hyperbolic group.  The following are equivalent:
\begin{enumerate}
\item $G$ is virtually special;
\item $G$ has a subgroup $G_0$ of finite index with a malnormal quasiconvex hierarchy;
\item $G$ has a subgroup $G_1$ of finite index with a quasiconvex hierarchy.
\end{enumerate}
\end{theorem}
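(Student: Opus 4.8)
The plan is to verify the three implications $(2)\Rightarrow(3)\Rightarrow(1)\Rightarrow(2)$, since two of them are essentially formal and the genuine content is concentrated in a single deep step. The implication $(2)\Rightarrow(3)$ is immediate: a malnormal quasiconvex hierarchy is in particular a quasiconvex hierarchy, so one simply forgets the malnormality condition and takes $G_1=G_0$.

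The implication $(3)\Rightarrow(1)$ is the Quasiconvex Hierarchy Theorem of Wise \cite{wise_structure_2012}, which I would invoke directly. Its proof proceeds by induction along the hierarchy of $G_1$: at each amalgamation $A*_CB$ or HNN extension $A*_C$ one already knows the vertex group(s) to be virtually special, and the task is to promote specialness across the splitting. The essential tools are Wise's Malnormal Special Quotient Theorem, which produces finite-index subgroups in which the edge group $C$ becomes malnormal and its peripheral structure can be filled, together with the combination theorem for special cube complexes of Haglund--Wise \cite{haglund_combination_2012}, which guarantees that the resulting amalgam of special pieces is again virtually special. Since passing to a finite-index overgroup preserves virtual specialness, it follows that $G$ itself is virtually special.

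For $(1)\Rightarrow(2)$ I would argue as follows. Let $S\le G$ be a finite-index subgroup realized as the fundamental group of a compact special cube complex $X$. The hyperplanes of $X$ are embedded, two-sided and convex, so each hyperplane carves out a splitting of $S$ as an amalgam or HNN extension whose edge group is the (quasiconvex) hyperplane subgroup; iterating along the hyperplanes yields a quasiconvex hierarchy, which already gives $(3)$. To obtain the malnormal version the edge groups must be made almost malnormal, which they need not be a priori. Here I would exploit that quasiconvex subgroups of a hyperbolic virtually special group are separable, together with the canonical completion and retraction of Haglund--Wise \cite{haglund_special_2008,haglund_combination_2012}: after passing to a further finite-index subgroup one arranges each edge group appearing in the hierarchy to be malnormal, while the cubulation results of Hsu--Wise \cite{hsu_cubulating_2015} ensure that the resulting malnormal amalgams remain within the cubical world so that the induction can be carried through.

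The main obstacle is precisely the construction in $(1)\Rightarrow(2)$: arranging that \emph{every} edge group in the hierarchy becomes malnormal \emph{simultaneously}, rather than one splitting at a time, without destroying either the quasiconvexity or the finite-index control established at the lower levels. This is exactly the point at which the separability of quasiconvex subgroups and the combinatorics of the canonical completion become indispensable, and it is also where the deepest input from \cite{wise_structure_2012} and \cite{hsu_cubulating_2015} enters.
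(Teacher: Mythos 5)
Your cyclic organization $(2)\Rightarrow(3)\Rightarrow(1)\Rightarrow(2)$ is sound, and the two implications you delegate to the literature are cited correctly: $(2)\Rightarrow(3)$ is indeed formal, and $(3)\Rightarrow(1)$ is Wise's quasiconvex hierarchy theorem \cite{wise_structure_2012}. This is essentially the same appeal to the literature that the paper makes, except that the paper obtains the equivalence of (1) and (2) directly from Hsu--Wise \cite{hsu_cubulating_2015} together with the Haglund--Wise combination theorem \cite{haglund_combination_2012}, rather than routing $(2)\Rightarrow(1)$ through the much deeper quasiconvex hierarchy theorem as your cycle does.

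The genuine gap is in your $(1)\Rightarrow(2)$. You claim that separability of the hyperplane subgroups (via the canonical completion of \cite{haglund_special_2008,haglund_combination_2012}) allows one to pass to a finite-index subgroup in which the edge groups become malnormal. Separability alone cannot do this: to malnormalize $C$ inside a finite-index subgroup one must exclude every $\gamma$ with $C^\gamma\cap C$ infinite, and a priori these $\gamma$ occupy infinitely many double cosets $C\gamma C$, which no finite quotient can avoid simultaneously (for instance, $F_2\times\{1\}$ is separable in $F_2\times\Z$ but is malnormal in no finite-index subgroup). The missing ingredient is the finite-width theorem of Gitik--Mitra--Rips--Sageev \cite{gitik_widths_1998} (Theorem \ref{thm: GMRS} in the paper): quasiconvexity in a hyperbolic group forces the offending double cosets to be finite in number, and only then does separability yield a finite-index subgroup in which $C$ is malnormal --- this is exactly Lemma \ref{lem: Virtually malnormal}, and it is exactly the pair of ingredients the paper itself identifies, in the parenthetical remark opening Section 7, as what this direction really needs. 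Two further corrections: Hsu--Wise \cite{hsu_cubulating_2015} is not an input to $(1)\Rightarrow(2)$ at all (cubulating malnormal amalgams is what drives the opposite direction $(2)\Rightarrow(1)$); and the ``main obstacle'' you flag but leave unresolved --- arranging malnormality of all edge groups compatibly with a terminating induction --- is handled by cutting, in the cover supplied by the family version of Lemma \ref{lem: Virtually malnormal}, along \emph{all} elevations of the chosen hyperplane at once (these are pairwise disjoint because the complex is special), so that every hyperplane of the resulting pieces lies over one of the remaining hyperplanes of the base complex and the induction on the number of hyperplanes of the base goes through.
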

\begin{proof}
The equivalence of 1 and 2 follows from theorems of Hsu--Wise \cite{hsu_cubulating_2015} and Haglund--Wise \cite{haglund_combination_2012}. Wise used these results in his proof of the equivalence of 1 and 3 \cite{wise_structure_2012}.
\end{proof}

Although quasiconvex subgroups of word-hyperbolic groups may not be (almost) malnormal, they enjoy a weaker algebraic property that is almost as useful, first studied by Gitik, Mitra, Rips and Sageev \cite{gitik_widths_1998}.

\begin{definition}
Let $H$ be a subgroup of a group $G$.  The \emph{width} of $H$ is the maximal cardinality of a set of distinct right-cosets
\[
\{H\gamma_i\}\subseteq H\backslash G
\]
such that $|H^{\gamma_i}\cap H^{\gamma_j}|=\infty$ for all $i,j$.
\end{definition}

In particular, note that the width of $H$ is 0 if and only if $H$ is finite, and the width of $H$ is 1 if and only if $H$ is infinite and almost malnormal.

\begin{theorem}[Gitik--Mitra--Rips--Sageev \cite{gitik_widths_1998}]\label{thm: GMRS}
If $G$ is hyperbolic and $H$ is quasiconvex in $G$ then the width of $H$ is finite.
\end{theorem}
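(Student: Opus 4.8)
The plan is to translate the statement into the geometry of the Cayley graph and reduce the finiteness of the width to a ball-counting estimate. Fix a finite generating set $S$ and let $X=\mathrm{Cay}(G,S)$, which is $\delta$-hyperbolic for some $\delta$; let $H$ be $\kappa$-quasiconvex, so that every geodesic between two elements of $H$ stays in the neighbourhood $N_\kappa(H)$. For a coset $H\gamma$ the natural invariant set of the conjugate $H^\gamma=\gamma^{-1}H\gamma$ is the left coset $Y_\gamma:=\gamma^{-1}H$, which is again $\kappa$-quasiconvex and satisfies $H^\gamma\cdot Y_\gamma=Y_\gamma$; moreover $H\gamma_i=H\gamma_j$ if and only if $Y_{\gamma_i}=Y_{\gamma_j}$. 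Thus a family of cosets realizing the width becomes a family of distinct quasiconvex sets $Y_{\gamma_1},\dots,Y_{\gamma_n}$ whose stabilizers intersect pairwise in infinite groups.

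First I would record the consequence of an infinite pairwise intersection. Since $G$ is hyperbolic it contains no infinite torsion subgroup, so each infinite group $H^{\gamma_i}\cap H^{\gamma_j}$ contains a loxodromic element $g_{ij}$. This element stabilizes both $Y_{\gamma_i}$ and $Y_{\gamma_j}$ and translates along each of them, so both sets lie within a uniform distance $D=D(\delta,\kappa)$ of (a quasigeodesic representing) the axis of $g_{ij}$; in particular $Y_{\gamma_i}$ and $Y_{\gamma_j}$ contain points within $2D$ of one another, and they fellow-travel along this common direction. Equivalently, in the boundary $\partial G$ the limit sets $\Lambda(H^{\gamma_i})$ and $\Lambda(H^{\gamma_j})$ share the pair of fixed points $g_{ij}^{\pm}$.

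The heart of the argument---and the step I expect to be the main obstacle---is to upgrade these \emph{pairwise} fellow-travelling statements into a single \emph{simultaneous} configuration: I would show that there is a constant $D'=D'(\delta,\kappa)$ and a point $x\in X$ (which may be pushed arbitrarily far out along the shared directions) such that every $Y_{\gamma_i}$ meets the ball $B(x,D')$. The difficulty is that a priori each pair could fellow-travel in a different region of the graph, so one cannot simply intersect the thin regions; this is exactly where the delicate combinatorics of thin triangles enters, controlling how finitely many quasiconvex sets with pairwise infinite overlaps must cluster. I would carry this out by fixing the loxodromic direction of one distinguished pair and using $\delta$-thinness to force the remaining sets into a bounded neighbourhood of it far along the axis, enlarging the constant at each comparison.

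Granting the simultaneous configuration, the conclusion is a counting argument. For each $i$ choose $h_i\in H$ with $\gamma_i^{-1}h_i\in B(x,D')$. If $\gamma_i^{-1}h_i=\gamma_j^{-1}h_j$ then $\gamma_j\gamma_i^{-1}=h_j h_i^{-1}\in H$, forcing $H\gamma_i=H\gamma_j$; since the cosets are distinct, the elements $\gamma_i^{-1}h_i$ are distinct. Hence $n$ is at most the number of vertices of $X$ in a ball of radius $D'$, which is bounded by $|B(1,D')|\le |S|^{\,O(D')}$, a constant depending only on $G$ and $H$. This bounds the width, completing the proof.
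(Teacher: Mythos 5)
First, a point of comparison: the paper does not prove this theorem at all — it is imported as a black box with the citation \cite{gitik_widths_1998} — so the only meaningful comparison is with the original Gitik--Mitra--Rips--Sageev argument, and your proposal should be judged on whether it stands on its own.

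It does not: there is a genuine gap, and it sits exactly where you yourself flag ``the main obstacle''. The outer layers are fine. Infinite subgroups of hyperbolic groups do contain loxodromic elements; the axis of a loxodromic in $H^{\gamma_i}\cap H^{\gamma_j}$ does lie in a uniform neighbourhood of both $Y_{\gamma_i}$ and $Y_{\gamma_j}$ (note the direction: the axis is close to each set, not — as you wrote — the sets close to the axis, which is false since $Y_{\gamma_i}$ is typically far larger than a line); and, \emph{granting} your simultaneous configuration, the ball-counting in your last step correctly bounds the width. But the simultaneous configuration — a single point $x$ such that every $Y_{\gamma_i}$ meets $B(x,D')$ with $D'=D'(\delta,\kappa)$ \emph{independent of $n$} — is not a lemma one can extract from thin triangles by iterated comparison; it is essentially the whole theorem, a uniform coarse Helly/bounded-packing statement for the cosets. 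Your proposed mechanism, ``enlarging the constant at each comparison'', is self-defeating: if the constant grows with each of the $n$ comparisons, you end with $D'=D'(\delta,\kappa,n)$, and your count then reads $n\le |B(1,D'(n))|$, which bounds nothing. What thin triangles genuinely give is the case of \emph{three} pairwise-close quasiconvex sets (take approximate closest-approach points $x_{AB}\in A$, $x_{AC}\in A$, $x_{BC}\in B$ and the centre of the triangle they span); one can then iterate by replacing two sets with their coarse intersection, but each iteration degrades both the quasiconvexity constant and the closeness constant, so this route inherently produces an $n$-dependent bound. Producing a constant uniform in $n$ is precisely the delicate global content of \cite{gitik_widths_1998} (or, alternatively, of Hruska--Wise's bounded packing theorem, from which finite width also follows), and nothing in your outline supplies that step.
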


Recall that a subgroup $H$ is \emph{separable} if it is closed in the profinite topology on $G$.  The following lemma is often useful when combined with Theorem \ref{thm: GMRS}.

\begin{lemma}\label{lem: Virtually malnormal}
If a subgroup $H$ of a torsion-free group $G$ is both separable
and has finite width, then there is a subgroup $G_0$ of finite
index in $G$ that contains $H$ and such that $H$ is malnormal in
$G_0$.
\end{lemma}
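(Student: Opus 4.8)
The plan is to reformulate malnormality as a statement about a single ``bad set'' and then to excise that set using separability. Since $G$ is torsion free, every nontrivial subgroup of $G$ is infinite, so for $\gamma\in G$ the intersection $H^\gamma\cap H$ is either trivial or infinite. Set $B=\{\gamma\in G: H^\gamma\cap H\neq 1\}$. A direct computation shows that $B$ is symmetric ($B=B^{-1}$) and is a union of double cosets $H\gamma H$, since conjugating $H^\gamma\cap H$ by an element of $H$ only replaces it by an $H$-conjugate. With this notation, a finite-index subgroup $G_0\supseteq H$ has $H$ malnormal in $G_0$ if and only if $G_0\cap B\subseteq H$; equivalently $G_0$ must avoid every double coset contained in $B\setminus H$.

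First I would record the separability endgame, which is routine. Suppose we have already produced finitely many double cosets $H\gamma_1H,\dots,H\gamma_mH$, with representatives $\gamma_i\notin H$, that cover $B\setminus H$. Because $H$ is separable and $\gamma_i\notin H$, for each $i$ there is a finite-index subgroup $G_i\supseteq H$ with $\gamma_i\notin G_i$. Since $H\subseteq G_i$, the whole double coset avoids $G_i$: if $h_1\gamma_ih_2\in G_i$ with $h_1,h_2\in H$, then $\gamma_i\in G_i$, a contradiction. Taking $G_0=\bigcap_{i=1}^m G_i$ gives a finite-index subgroup containing $H$ with $G_0\cap(B\setminus H)=\varnothing$, and hence with $H$ malnormal in $G_0$.

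The crux, and the main obstacle, is to use the finite width hypothesis to reduce $B\setminus H$ to finitely many double cosets. Finite width bounds the clique number of the graph on $H\backslash G$ in which two cosets are joined when the corresponding conjugates of $H$ meet in an infinite group; this controls how many conjugates can pairwise intersect infinitely, but it does not by itself bound the number of distinct bad double cosets, so the separability hypothesis must be fed in as well. I would therefore argue by induction on the width $n$ of $H$. If $n\le 1$, then $H$ is almost malnormal, hence malnormal as $G$ is torsion free, and $G_0=G$ works. If $n\ge 2$, I would choose a family of $n$ distinct cosets realising the width, and use separability to pass to a finite-index subgroup $G'\supseteq H$ that separates $H$ from a coset representative completing this maximal cluster; the aim is that the width of $H$ measured inside $G'$ drops below $n$. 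Since $H$ remains torsion free, separable, and of finite width in $G'$, the inductive hypothesis then applies to $H\le G'$ and produces the desired $G_0\le G'$. Forcing the width to strictly decrease after a single finite-index step—so that only finitely many separations are ever required—is exactly the delicate point, and it is here that the interaction between the width bound and separability does the real work.
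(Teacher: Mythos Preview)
The paper states this lemma without proof, treating it as standard. Your overall framework and your ``separability endgame'' are both correct: once one knows that $B\setminus H$ decomposes into finitely many double cosets $H\gamma_1H,\dots,H\gamma_mH$, separating each $\gamma_i$ from $H$ and intersecting the resulting finite-index overgroups finishes the argument exactly as you describe.

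The gap is precisely where you locate it, but it is genuine rather than merely delicate. Passing to a finite-index overgroup $G'\supseteq H$ that excludes one representative $\gamma_n$ from one maximal clique need not lower the width of $H$ in $G'$: there may be a completely different $n$-clique of cosets lying entirely inside $G'$, and nothing in your argument excludes this. Since there is no a priori bound on the number of maximal $n$-cliques, your induction cannot get started. The underlying issue is that finite width bounds the \emph{clique number} of the intersection graph on $H\backslash G$, but bounded clique number says nothing about the \emph{degree} of the vertex $H$ in that graph --- a star with centre $H$ and infinitely many leaves has clique number $2$, yet infinitely many bad cosets.

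What the standard proof actually uses is the stronger fact that $B\setminus H$ is a finite union of double cosets. This is not a formal consequence of the width definition alone; rather, in the intended application (quasiconvex subgroups of hyperbolic groups), it is part of the same GMRS package that delivers finite width --- their argument in fact bounds the number of double cosets $H\gamma H$ with $H^\gamma\cap H$ infinite. With that finiteness in hand, your endgame \emph{is} the whole proof, and no induction on width is needed. So the fix is not to repair the induction but to replace it: invoke the finiteness of bad double cosets directly, and then run your second paragraph.
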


Therefore, as long as we know that $H$ is separable (as is the case for quasiconvex subgroups of virtually special groups, for instance), we may promote
finite width to malnormality, by passing to a finite-index subgroup.

The connection to 3-manifolds arises from Agol's solution to the Virtual Haken Conjecture \cite{agol_virtual_2013} which, in addition to the above theorems of Haglund, Hsu and Wise, makes essential use of the work of Kahn--Markovic \cite{kahn_immersing_2012}.

\begin{theorem}[Agol]\label{thm: VH conjecture}
If $M$ is a closed hyperbolic 3-manifold then $\pi_1M$ is hyperbolic and virtually special.
\end{theorem}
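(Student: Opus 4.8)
The plan is to treat the two conclusions separately, as word-hyperbolicity is classical while virtual specialness is the deep content. For hyperbolicity: since $M$ is a closed hyperbolic $3$-manifold, its universal cover is $\mathbb{H}^3$ and $\pi_1M$ acts on it properly discontinuously, cocompactly, and by isometries. As $\mathbb{H}^3$ is a proper geodesic $\delta$-hyperbolic space, the \v{S}varc--Milnor lemma gives a quasi-isometry between $\pi_1M$ and $\mathbb{H}^3$, and Gromov-hyperbolicity is a quasi-isometry invariant; hence $\pi_1M$ is word-hyperbolic. This requires nothing beyond standard geometric group theory.

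The substantive content is virtual specialness, and my strategy would be to first exhibit $\pi_1M$ acting geometrically on a CAT(0) cube complex, and then invoke the structure theory of special cube complexes. The cubulation step proceeds as follows. By the Surface Subgroup Theorem of Kahn--Markovic \cite{kahn_immersing_2012}, $\pi_1M$ contains an abundant supply of quasi-Fuchsian (hence quasiconvex) closed surface subgroups; each such surface determines a codimension-one subgroup. Following Bergeron and Wise, one checks that this family is rich enough to separate every pair of distinct points of $\partial\mathbb{H}^3$, so that the associated system of walls is essential and $\pi_1M$-cocompact. Feeding this wall structure into Sageev's construction yields a CAT(0) cube complex on which $\pi_1M$ acts, and the separation together with quasiconvexity guarantees that the action is both proper and cocompact. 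At this point $\pi_1M$ is a word-hyperbolic group acting geometrically on a CAT(0) cube complex.

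The final and hardest step is to upgrade this geometric cubical action to virtual specialness, which is precisely Agol's theorem (confirming Wise's conjecture): every word-hyperbolic group acting properly and cocompactly on a CAT(0) cube complex has a finite-index subgroup that is the fundamental group of a compact \emph{special} cube complex \cite{agol_virtual_2013}. Its proof rests on Wise's Malnormal Special Quotient Theorem \cite{wise_structure_2012} together with Agol's coloring argument, which produces a finite regular cover in which the hyperplanes no longer exhibit the forbidden pathologies. I expect this last step to be the main obstacle: the cubulation via Kahn--Markovic and Sageev is by now fairly standard input, whereas the passage from a cocompact cubical action to a special finite cover is the genuinely deep result, combining the full force of Wise's hierarchy machinery with Agol's combinatorial argument. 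Granting that theorem, combining it with the cubulation of the previous paragraph completes the proof.
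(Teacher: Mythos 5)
Your outline is correct and coincides with what the paper does: the paper states this result as a citation, attributing it to Agol's theorem that hyperbolic groups acting geometrically on CAT(0) cube complexes are virtually special \cite{agol_virtual_2013} (resting on Wise's machinery \cite{wise_structure_2012}), applied to the cubulation of $\pi_1M$ obtained from the Kahn--Markovic surface subgroups \cite{kahn_immersing_2012} via the Bergeron--Wise criterion and Sageev's construction, with word-hyperbolicity being the classical \v{S}varc--Milnor consequence. Your assembly of these ingredients is exactly the standard proof the paper invokes, so there is nothing to add.
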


In summary, the Virtual Haken Conjecture asserts that, after passing to a finite-index subgroup, every closed hyperbolic 3-manifold admits a Haken hierarchy.  However, Agol proved the even stronger result that, after passing to a finite-index subgroup, every closed hyperbolic 3-manifold admits a quasiconvex malnormal hierarchy.

The malnormality of the hierarchy will be crucial to our argument, since we will be working in the profinite setting, where there  is no well developed notion of quasiconvexity. Malnormality, however, is a purely algebraic notion, and therefore makes perfect sense for subgroups of profinite completions.

\section{Haglund and Wise's canonical completion}\label{sec: Canonical completion}

One important element of the proof of Theorem \ref{thm: Hierarchies} is Haglund and Wise's combination theorem for virtually special cube complexes \cite{haglund_combination_2012}. Their principal tool is the \emph{canonical completion} of a quasiconvex subgroup.  In this section, we extract an algebraic property of the canonical completion, which we will then be able to apply in the profinite setting.  Our main result is as follows.

\begin{theorem}\label{thm: Canonical completion}
Let $G$ be a  hyperbolic group and the fundamental group of a
compact, virtually special cube complex, and let
$\mathcal{H}=\{H,K\}$ be a malnormal family of
quasiconvex subgroups.  For any finite-index subgroup $G_1$
of $G$ there exists a finite-index subgroup
$G_0\subseteq G_1$ and a retraction map $\rho:G_0\to
H\cap G_1$ with the following properties:
\begin{enumerate}
\item $\rho(H^\gamma\cap G_0)=1$ unless $\gamma\in
H G_1$; and \item $\rho(K^\gamma\cap G_0)=1$
for all $\gamma\in G$.
\end{enumerate}
\end{theorem}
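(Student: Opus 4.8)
The plan is to read off the required retraction from Haglund and Wise's canonical completion and retraction \cite{haglund_combination_2012}, using the malnormality of $\mathcal H=\{H,K\}$ to control its behaviour on all but the principal elevation. First I would move to the special setting. Fix a finite-index subgroup $S\leq G$ that is the fundamental group of a compact special cube complex; since covers of special cube complexes are special, I may pass to the finite cover corresponding to $G_1\cap S$ and so work with a compact special complex $X$ with $\pi_1X\leq G_1\cap S$. Because $S$ is special, $\pi_1X$ is torsion-free, so the \emph{almost} malnormal family $\mathcal H$ restricts to a genuinely malnormal family of quasiconvex subgroups of $\pi_1X$. By quasiconvexity these restrictions are represented by local isometries $Y_H\to X$ and $Y_K\to X$ from compact special cube complexes. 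Applying the canonical completion to the chosen (principal) elevation $Y_H\to X$ yields a finite cover $\widehat X\to X$ into which $Y_H$ embeds as a combinatorial retract, together with the canonical retraction $r\colon\widehat X\to Y_H$; on fundamental groups this produces a retraction onto $H\cap\pi_1X$. Identifying this $\pi_1X$-level data with the given $G_1$ so that the image is exactly $H\cap G_1$, with $G_0\subseteq G_1$, is a bookkeeping point that I return to below.

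The heart of the matter is the claim that $r$ \emph{collapses} every elevation of $Y_H$ to $\widehat X$ apart from the principal copy, and every elevation of $Y_K$: each such subcomplex is carried by $r$ into the subtree onto which the principal $Y_H$ is contracted, and hence has trivial image on fundamental groups. I would establish this by the contrapositive. An elevation of $Y_H$ corresponds to a conjugate $H^\gamma$, and is distinct from the principal copy exactly when $\gamma\notin HG_1$; an elevation of $Y_K$ corresponds to a conjugate $K^\gamma$. If $\rho$ were nontrivial on $H^\gamma\cap G_0$ (resp.\ on $K^\gamma\cap G_0$), then tracing a nontrivial image element back through the combinatorial description of the canonical retraction produces a nontrivial element lying simultaneously in $H\cap G_1$ and in a conjugate $H^{\gamma'}$ with $\gamma'\in\gamma G_1$ (resp.\ in a conjugate $K^{\gamma'}$). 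Thus $(H\cap G_1)\cap H^{\gamma'}\neq 1$ (resp.\ $(H\cap G_1)\cap K^{\gamma'}\neq 1$).

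Malnormality of the family now finishes the argument. Since $\mathcal H$ is (almost) malnormal in $G$ and $H^{\gamma'}\cap H$, $K^{\gamma'}\cap H$ are the relevant intersections, the first inequality forces $\gamma'\in H$, whence $\gamma\in HG_1$, while the second is simply impossible. This gives precisely properties (1) and (2): $\rho(H^\gamma\cap G_0)=1$ unless $\gamma\in HG_1$, and $\rho(K^\gamma\cap G_0)=1$ for every $\gamma$. The factor of $G_1$ in the exceptional set of (1) is exactly the ambiguity introduced by passing to the cover $X$ (equivalently, by replacing $\gamma$ by $\gamma'\in\gamma G_1$), which is why the clause reads $\gamma\in HG_1$ rather than $\gamma\in H$.

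The step I expect to be the main obstacle is extracting the collapsing property from the canonical retraction with enough precision to run the contrapositive: one must show, directly from the hyperplane-by-hyperplane construction of $r$ in \cite{haglund_special_2008,haglund_combination_2012}, that a nontrivial $\rho$-image genuinely yields a nontrivial element of the intersection of two honest conjugates, and not merely a shared hyperplane that could be simply connected. Alongside this sits the finite-index bookkeeping flagged above, needed to guarantee that the image of $\rho$ is exactly $H\cap G_1$, that $H\cap G_1\subseteq G_0\subseteq G_1$, and that the exceptional coset union is exactly $HG_1$; this is routine but delicate, since retractions do not extend freely across finite-index inclusions, and it must be tracked carefully through the passage between $X$, $\widehat X$, and the given subgroup $G_1$.
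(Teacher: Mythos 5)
There is a genuine gap at the heart of your proposal, and it sits exactly where you flagged it. Your plan is to apply the canonical completion directly to the principal elevation $Y_H\to X$ (with $X$ the cover corresponding to $G_1\cap S$) and then to argue, by a ``contrapositive'' trace-back, that a nontrivial $\rho$-image of $H^\gamma\cap G_0$ or $K^\gamma\cap G_0$ would produce a nontrivial element of $(H\cap G_1)\cap H^{\gamma'}$ or $(H\cap G_1)\cap K^{\gamma'}$, contradicting malnormality. This implication is not available and is in general false at the level of $X$: by Haglund--Wise (Theorem \ref{thm: Wall projections} in this paper, i.e.\ Lemma 3.16 of \cite{haglund_combination_2012}), the retraction carries the preimage of $B$ into the \emph{wall projection} $W_X(B\to A)$, which is a geometric object built from hyperplane osculations; it can contain essential loops even when the corresponding subgroups have trivial intersection. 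So a nontrivial image element need not lie in any intersection of honest conjugates --- malnormality of $\{H,K\}$ alone does not make the retraction collapse the other elevations of $Y_H$ or any elevation of $Y_K$, and there is no way to ``trace back'' as you propose. This is precisely why the paper does not run your argument.

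What is missing is the intermediate, and crucial, use of Haglund--Wise's Corollary 5.8 (Theorem \ref{thm: Trivial wall projections}): for a malnormal family of quasiconvex subgroups of a hyperbolic virtually special group, one can pass to suitable \emph{finite covers} in which the relevant wall projections become trivial. The paper's proof first applies this theorem once for each of the finitely many double cosets $H\gamma G_1$ (with $\gamma\notin HG_1$) and $K\gamma G_1$, producing covers $X^\gamma_1$, $X^\gamma_2$; then it intersects all of them to a single finite cover $\overline{X}$, using Lemma \ref{lem: Trivial wall projections lifts} to know that triviality of wall projections persists in the deeper cover; and only then does it form the canonical completion $C(\overline{A},\overline{X})$ and invoke Theorem \ref{thm: Wall projections} to conclude that $\rho$ kills the other elevations, because their wall projections onto $\overline{A}$ are trivial. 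In short, malnormality enters only through Corollary 5.8, not through a direct group-theoretic argument, and the completion must be taken over the carefully prepared cover $\overline{X}$, not over $X$ itself. (By contrast, the bookkeeping issue you raise about the image being exactly $H\cap G_1$ is minor: the paper's own construction really retracts onto $H\cap G_0$, and this discrepancy is harmless in the application in Theorem \ref{thm: Malnormal closure}.)
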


To prove this we assemble some definitions and results of Haglund--Wise.  We refer to their paper and the references therein for definitions.

Let $X$ be a compact special cube complex, and let $A,B\subseteq X$ be subcomplexes.  Then Haglund--Wise define the \emph{wall projection} $W_X(A\to B)$, a subcomplex of $X$.  We refer to their paper for the definition, the details of which we will not need, but we note that $W_X(A\to B)$ is said to be \emph{trivial} if every loop in $W_X(A\to B)$ is null-homotopic in $X$.

\begin{theorem}\label{thm: Wall projections}
Let $A,B$ be locally convex subcomplexes of a special cube complex $X$.  There is finite-sheeted covering space $C(A,X)\to X$ to which the inclusion $A\to X$ lifts, and a retraction $r:C(A,X)\to A$.  Furthermore, if $B_0$ is the full preimage of $B$ in $C(A,X)$, then
\[
r(B_0)\subseteq W_X(B\to A)~.
\]
\end{theorem}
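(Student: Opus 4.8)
The plan is to first recall Haglund--Wise's construction of the canonical completion $C(A,X)$ together with its retraction $r$, and then to extract from that construction the one combinatorial feature that forces the stated containment, namely that $r$ is a \emph{hyperplane-preserving} cubical map over $X$. To build the pair $(C(A,X),r)$ I would reduce to the one-dimensional case in the manner of Stallings folding. Since $A\to X$ is a local isometry (this is the content of local convexity) and $X$ is special, $X$ admits a local isometry to the Salvetti complex $S_\Gamma$ of a right-angled Artin group, whose generators are indexed by the hyperplanes of $X$; for each generator the edges of $A$ carrying that label define a \emph{partial} permutation of the vertex set $V(A)$. One completes each such partial permutation canonically to a genuine permutation of $V(A)$ --- so that every generator label acts by a bijection --- where the completion rule is precisely the one designed to make the retraction below exist. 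This yields a finite-sheeted covering of the $1$-skeleton into which $A^{(1)}$ embeds. Because the completion is carried out independently hyperplane-by-hyperplane and $A\to X$ is a local isometry, this cover of $X^{(1)}$ extends to a genuine cubical cover $C(A,X)\to X$: one fills in a cube over each cube of $X$ whenever the bounding edges lift compatibly, and here the specialness hypotheses (two-sidedness and the absence of self- and inter-osculations of hyperplanes) guarantee that the cubes can be filled and that $A$ itself lifts.

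The retraction $r$ is then defined first on the $1$-skeleton: each edge of $A^{(1)}$ is sent to itself, and each \emph{added} edge is sent either to a vertex or to the edge of $A^{(1)}$ with which the canonical matching pairs it. The decisive point, which I would verify with care, is that this assignment sends any edge lying over a hyperplane $h$ of $X$ either to a vertex or to an edge of $A$ that again lies over $h$. One extends $r$ over cubes by mapping a cube to the product of the images of its edges; this is again a cube of $A$ (of possibly lower dimension) because $r$ respects the commutation relations encoded by the crossing pattern of hyperplanes. The upshot is a cubical map $r\colon C(A,X)\to A$ restricting to the identity on $A$ with the property that for every edge $e$ of $C(A,X)$, either $r(e)$ is a vertex or $r(e)$ is an edge of $A$ dual to the \emph{same} hyperplane of $X$ as $e$; and likewise each dual hyperplane of a cube is preserved.

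Granting this hyperplane-preservation, the containment is immediate. Recall that $W_X(B\to A)$ is by definition the subcomplex of $A$ spanned by those cells all of whose dual hyperplanes (via $A\to X$) cross $B$. Let $\tilde e$ be an edge of the full preimage $B_0$; its image in $X$ is an edge of $B$, so its dual hyperplane $h$ crosses $B$. By hyperplane-preservation $r(\tilde e)$ is either a vertex or an edge of $A$ whose dual hyperplane in $X$ is again $h$, and hence crosses $B$; either way $r(\tilde e)$ lies in $W_X(B\to A)$. Running the same argument cube-by-cube (each dual hyperplane of a cube of $B_0$ crosses $B$, and $r$ preserves each of them) yields
\[
r(B_0)\subseteq W_X(B\to A).
\]
The main obstacle is not this final deduction but the construction that underlies it: checking that the hyperplane-by-hyperplane canonical completion genuinely assembles into a finite-sheeted \emph{cubical} cover of $X$ into which $A$ embeds, and that the retraction extends from the $1$-skeleton across all cubes while preserving every dual hyperplane. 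This is exactly where the specialness hypotheses are essential and where the higher-dimensional bookkeeping is delicate; the one-dimensional skeleton is controlled by elementary folding combinatorics, but promoting it to a cubical statement is the substantive step.
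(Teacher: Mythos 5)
The paper itself does not reprove this statement: its proof is a two-line citation, taking the existence of $C(A,X)$ and of the retraction $r$ from Haglund--Wise's \emph{Special cube complexes} paper \cite{haglund_special_2008}, and the containment $r(B_0)\subseteq W_X(B\to A)$ from Lemma 3.16 of \cite{haglund_combination_2012}. What you have done is inline the cited construction, and your outline --- completing, generator by generator, the partial permutations of $V(A)$ induced by the edge labels, filling in cubes using specialness, and deducing the containment from the fact that $r$ moves points only along edges carrying the same hyperplane label --- is faithful in spirit to how Haglund and Wise actually argue, so the overall strategy is the right one.

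However, the step you single out as ``the decisive point'' is false as stated, and the verification you defer would fail. The canonical retraction is \emph{not} a cubical map sending each edge to a vertex or to a single edge of $A$: if an arc of $A$ over a given label has length $\ell\ge 2$, the completion adds a single edge $e$ joining the terminal vertex of that arc back to its initial vertex, and since $r$ restricts to the identity on $A$, it must send $e$ to a path joining two vertices of $A$ at distance $\ell$; so $r(e)$ is neither a vertex nor a single edge, and your description (``each added edge is sent \dots\ to the edge of $A^{(1)}$ with which the canonical matching pairs it'') is not the Haglund--Wise retraction, which sends $e$ across the entire arc it completes and is combinatorial only after subdivision. The repair is to weaken the hyperplane-preservation claim to what actually holds: every edge of $C(A,X)$ lying over an edge of $X$ dual to a hyperplane $h$ is sent by $r$ to a (possibly constant) edge path in $A$ all of whose edges are dual to $h$, because the completed arc carries the single label $h$; correspondingly, a cube retracts to a union of cubes whose dual hyperplanes form a subset of those of the original cube. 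With this correction your final deduction goes through verbatim, since membership in $W_X(B\to A)$ depends only on which hyperplanes are dual to a cell (vertices qualify vacuously), not on which particular edge realizes them. Two smaller caveats: finite-sheetedness requires $A$ to be compact (the degree of the completion over the Salvetti complex is $|V(A)|$), and the square-lifting verification you gesture at --- that the completed $1$-skeleton cover extends over every cube of $X$, with $A$ lifting --- is precisely the substance of Haglund--Wise's construction and remains, as written, a sketch rather than a proof.
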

\begin{proof}
The existence of $C(A,X)$ and $r$ were proved in \cite{haglund_special_2008}.  The inclusion is Lemma 3.16 of \cite{haglund_combination_2012}.
\end{proof}

The covering space $C(A,X)$ is called the \emph{canonical completion} of $A$, and the map $r$ is the \emph{canonical retraction}. The above result shows that the image of the canonical retraction is controlled by the wall projection.  Ensuring that wall projections are trivial is a delicate problem, and much of the difficulty of \cite{haglund_combination_2012} lies in its resolution. In order to resolve the problem, we need to recall the definition of an \emph{elevation}.

\begin{definition}
Let $f:Y\to X$ be a continuous map of topological spaces and let $p:X'\to X$ be a covering map. Recall that the fibre product
\[
Y\times_X X'=\{(y,x')\in Y\times X'\mid f(y)=p(x')\}
\]
is naturally a covering space of $Y$ and comes equipped with a natural lift $f':Y\times_X X'\to X'$.  The restriction of $f'$ to a path component $Y'$ of $Y\times_X X'$ is called an \emph{elevation of $f$ to $X'$}.
\end{definition}

\begin{remark}
By standard covering space theory, after choosing base points there is a natural bijection between the set of double cosets
\[
f_*\pi_1Y\backslash \pi_1X/\pi_1X'
\]
and the path components of $Y\times_X X'$. If this bijection is denoted by
\[
(f_*\pi_1Y)\gamma \pi_1X'\mapsto Y_\gamma
\]
then
\[
f'_*\pi_1Y_\gamma=(f_*\pi_1Y)^\gamma\cap\pi_1X'~,
\]
where lower star means  the induced homomorphism of the
fundamental groups.

\end{remark}

We are now ready to state a theorem of Haglund and Wise, which guarantees that we can make wall projections trivial in a finite-sheeted cover.

\begin{theorem}[\cite{haglund_combination_2012}, Corollary 5.8]\label{thm: Trivial wall projections}
Let $X$ be a compact, virtually special cube complex with $\pi_1X$ word-hyperbolic.  Suppose that $A\to X$, $B\to X$ are local isometries (ie $\pi_1A,\pi_1B$ are quasiconvex subgroups of $X$) and that $\{\pi_1A,\pi_1B\}$ is a malnormal family of subgroups of $\pi_1X$.  There is a finite-sheeted covering space $A_0\to A$ such that any further finite-sheeted covering space $\overline{A}\to A_0\to A$ can be completed to a finite-sheeted covering space $\overline{X}\to X$ with the following properties:
\begin{enumerate}
\item every elevation of $A\to X$ and $B\to X$ to $\overline{X}$ is injective;
\item every elevation $A'$ of $A\to X$ to $\overline{X}$ distinct from $\overline{A}$ has trivial $W_{\overline{X}}(A'\to \overline{A})$;
\item every elevation $\overline{B}$ of $B\to X$ to $\overline{X}$ has trivial $W_{\overline{X}}(\overline{B}\to \overline{A})$.
\end{enumerate}
\end{theorem}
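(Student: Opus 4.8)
The plan is to reconstruct Haglund and Wise's argument, whose engine is the interplay between the combinatorics of hyperplanes and the algebra of a malnormal quasiconvex family. First I would pass to a finite special cover so that we may assume $X$ itself is special, lifting $A$ and $B$ to locally convex subcomplexes; the canonical-completion machinery of Theorem \ref{thm: Wall projections} requires a genuinely special complex, and a cover of the required form for the special cover yields one for $X$. The central object to exploit is the wall projection: by Theorem \ref{thm: Wall projections} we have $W_X(B\to A)\subseteq A$, and since $A\to X$ is a local isometry, $\pi_1A$ injects into $\pi_1X$, so a loop in $W_X(B\to A)$ is null-homotopic in $X$ if and only if it is null-homotopic in $A$. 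Hence triviality of the wall projection is equivalent to the inclusion $W_X(B\to A)\hookrightarrow A$ inducing the trivial map on $\pi_1$; an essential loop would persist in $\pi_1A$ and would force a long common hyperplane pattern between $B$ and $A$, reflecting a large coarse intersection of their convex cores.

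Next I would convert the malnormality hypothesis into a geometric bound and then trivialize by separability. Since $\{\pi_1A,\pi_1B\}$ is a malnormal family of quasiconvex subgroups, the intersections $\pi_1A^\gamma\cap\pi_1A$ (for $\gamma\notin\pi_1A$) and $\pi_1A\cap\pi_1B^\gamma$ are finite, so the corresponding coarse intersections of convex cores have uniformly bounded diameter, and by Theorem \ref{thm: GMRS} only finitely many double cosets realise an essential overlap. The mechanism for trivialization is then separability of quasiconvex subgroups of virtually special groups (the hypothesis underlying Lemma \ref{lem: Virtually malnormal}): every essential loop that could appear in a wall projection can be unwrapped in some finite-sheeted cover of $X$. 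The elevation--double-coset correspondence recorded in the remark above identifies each problematic elevation $A'$ or $\overline B$ with a specific intersection $\pi_1A^\gamma\cap\pi_1\overline X$ or $\pi_1B^\gamma\cap\pi_1\overline X$, so that killing the offending loop is exactly a separability statement about the finite overlap guaranteed by malnormality. Because finite width bounds the number of cosets to treat and there are only finitely many hyperplanes to control, a single finite cover $\overline X\to X$ can be arranged to make every elevation of $A$ and $B$ embedded (property 1) and to force $W_{\overline X}(A'\to\overline A)$ and $W_{\overline X}(\overline B\to\overline A)$ to die in $\pi_1\overline A$ (properties 2 and 3).

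Finally I would isolate the two-stage structure demanded by the statement. The cover $A_0\to A$ is chosen first to control only the self-interaction of $A$, namely to guarantee property 2 for every off-diagonal elevation $A'$, since this depends on $A$ alone; I would then show that for any further cover $\overline A\to A_0\to A$ the completion $\overline X$ can be produced by a fibre-product construction compatible with $\overline A$, so that refining $\overline A$ merely refines $\overline X$ while preserving all three properties. The main obstacle is precisely this uniformity: building one finite cover that simultaneously trivializes the self-projections of $A$ and the projections coming from $B$, keeps all elevations injective, and remains valid after an arbitrary further refinement $\overline A\to A_0$. Controlling the interaction between these two families of wall projections while retaining the required flexibility is the delicate combinatorial heart of Haglund and Wise's Section 5, and is where essentially all of the work resides.
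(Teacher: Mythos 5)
You should first note a structural point: the paper does not prove this statement at all. It is quoted verbatim from Haglund--Wise (Corollary 5.8 of \cite{haglund_combination_2012}) and used as a black box, so there is no internal proof to compare your reconstruction against; what can be assessed is whether your sketch would reconstruct Haglund and Wise's argument. It would not, and you effectively concede this in your final sentence, where you acknowledge that ``essentially all of the work'' --- producing one finite cover that simultaneously trivializes both families of wall projections, keeps elevations injective, and survives arbitrary refinement $\overline{A}\to A_0$ --- is left undone. That is not a technical loose end; it is the entire content of the statement, so what you have written is a plan rather than a proof.

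Beyond the missing heart, the mechanism you propose is not the right one. You want separability to ``unwrap'' each essential loop appearing in a wall projection, but separability only excludes finitely many \emph{specified} elements from a finite-index subgroup, whereas triviality of $W_{\overline{X}}(A'\to\overline{A})$ quantifies over all loops in a subcomplex of a cover that has not yet been constructed --- there is no fixed finite list of loops to separate, and your appeal to Theorem \ref{thm: GMRS} does not produce one, since for a genuinely malnormal family the off-diagonal intersections are \emph{trivial} (not merely finite, as you write), so finite width gives no list of ``offending'' double cosets at all. Haglund and Wise instead argue metrically: malnormality plus quasiconvexity (after replacing the locally convex subcomplexes by superconvex cores) yields a \emph{uniform diameter bound} on the parallelism between distinct elevations, hence on the relevant wall projections; one then passes to covers whose girth/injectivity radius exceeds this bound, so each wall projection lies in a contractible subcomplex and is trivial for metric reasons, with residual finiteness and separability entering only to supply sufficiently deep covers with controlled elevations. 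Two further slips: Theorem \ref{thm: Wall projections} as stated gives $r(B_0)\subseteq W_X(B\to A)$, not $W_X(B\to A)\subseteq A$ (the latter containment is part of Haglund--Wise's definition of the wall projection, which this paper deliberately leaves unstated); and your equivalence ``trivial iff the inclusion into $A$ is $\pi_1$-trivial'' therefore rests on a definition you have not been given. Your two-stage fibre-product guess for the quantifier structure ($A_0$ first, then any $\overline{A}$) is in the right spirit --- the completion $\overline{X}$ is indeed built from $\overline{A}$ via the canonical completion --- but the uniformity making this work is exactly the argument you have deferred.
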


We will also need the simple observation that the property of trivial wall projections is preserved under passing to finite-sheeted covers.

\begin{lemma}[\cite{haglund_combination_2012}, Lemma 5.2]\label{lem: Trivial wall projections lifts}
Suppose that $X'\to X$ is a covering map of connected cube complexes, and that $A,B\subseteq X$ are connected sub-complexes. If $W_X(B\to A)$ is trivial then, for any pair of elevations $A',B'$ (of $A$ and $B$ respectively) to $X'$, $W_{X'}(B'\to A')$ is also trivial.
\end{lemma}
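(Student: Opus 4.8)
The plan is to combine naturality of the wall projection under the covering map with injectivity of the induced map on fundamental groups. Write $p\colon X'\to X$ for the covering map. Since $A'$ and $B'$ are elevations of the connected complexes $A$ and $B$, the restrictions $p|_{A'}\colon A'\to A$ and $p|_{B'}\colon B'\to B$ are themselves (surjective) covering maps; in particular $p$ sends cells of $A'$ to cells of $A$ and cells of $B'$ to cells of $B$.

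First I would establish the naturality inclusion
\[
p\bigl(W_{X'}(B'\to A')\bigr)\subseteq W_X(B\to A).
\]
Recall that $W_{X'}(B'\to A')$ is the subcomplex of $A'$ built from those cells of $A'$ that are dual to hyperplanes of $X'$ crossing $B'$. A covering map carries hyperplanes to hyperplanes: if $\widehat h'$ is a hyperplane of $X'$ then $\widehat h:=p(\widehat h')$ is a hyperplane of $X$, an edge $e'$ of $A'$ dual to $\widehat h'$ maps to an edge $p(e')$ of $A$ dual to $\widehat h$, and if $\widehat h'$ crosses $B'$ then $\widehat h$ crosses $p(B')\subseteq B$. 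Thus every cell of $A'$ that contributes to $W_{X'}(B'\to A')$ maps under $p$ to a cell of $A$ that contributes to $W_X(B\to A)$, which yields the inclusion.

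Granting the inclusion, the lemma follows quickly. Let $\ell'$ be any loop in $W_{X'}(B'\to A')$. Then $p\circ\ell'$ is a loop in $W_X(B\to A)$, which is null-homotopic in $X$ by hypothesis, so $p_*[\ell']=[p\circ\ell']=1$ in $\pi_1X$. As $p$ is a covering map, $p_*\colon\pi_1X'\to\pi_1X$ is injective, whence $[\ell']=1$ in $\pi_1X'$; that is, $\ell'$ is null-homotopic in $X'$. Since $\ell'$ was arbitrary, $W_{X'}(B'\to A')$ is trivial.

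The only substantive point, and the main obstacle, is the naturality inclusion of the first step. It requires unwinding Haglund--Wise's hyperplane definition of the wall projection and checking that both of the relations ``$e$ is dual to $\widehat h$'' and ``$\widehat h$ crosses $B$'' are preserved by the covering projection $p$. Once this compatibility is verified, the remainder is standard covering-space theory, resting only on the injectivity of $p_*$.
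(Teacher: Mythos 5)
Your proof is correct, and it is essentially the canonical one: the paper itself gives no argument for this lemma, quoting it verbatim from Haglund--Wise (\cite{haglund_combination_2012}, Lemma 5.2), and the proof there is exactly your two steps --- the naturality inclusion $p\bigl(W_{X'}(B'\to A')\bigr)\subseteq W_X(B\to A)$ (covering maps of cube complexes preserve parallelism of edges, hence duality to hyperplanes and the property of crossing a subcomplex), followed by injectivity of $p_*\colon\pi_1X'\to\pi_1X$. The only phrase to state carefully is that the image of a hyperplane of $X'$ need not be an entire hyperplane of $X$, merely contained in one; but your argument only uses that an edge of $A'$ dual to a hyperplane crossing $B'$ maps to an edge of $A$ dual to a hyperplane crossing $B$, which is exactly what covering maps guarantee.
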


We can combine these to obtain the result that we need.

\begin{proof}[Proof of Theorem \ref{thm: Canonical completion}]
Let $X$ be a virtually special cube complex with
$G\cong\pi_1X$.  Since $H$ and $K$ are
quasiconvex subgroups, they are realized by local isometries of
cube complexes $A\to X$ and $B\to X$ respectively.  Let $G_1$
be any finite-index subgroup of $G$ and let $X^1$ be the
corresponding covering space of $X$.

We first prove the following: for any $\gamma\notin
H G_1$, there exists a finite-sheeted covering space
$X'\to X$ such that:
\begin{enumerate}
\item the trivial elevation $A'\to X'$ of $A\to X$ is injective;
\item the elevation $A'_\gamma\to X'$ of $A\to X$ corresponding to the double coset $H \gamma\pi_1X'$ is injective;
\item the wall projections $W_{X'}(A'_\gamma\to A')$ are trivial.
\end{enumerate}
To prove this, let $X^1\to X$ be the covering space corresponding
to $G_1$, let $A^1\to X^1$ be the elevation of $A\to X$ to
$X^1$ that corresponds to the trivial double coset, and let
$A^1_\gamma\to X^1$ be the elevation corresponding to
$H\gamma G_1$.  Since
$\{H\cap G_1,H^\gamma\cap G_1\}$ is a
malnormal pair of subgroups of $G_1$, we may apply Theorem
\ref{thm: Trivial wall projections} with $X^1$ in the role of $X$,
$A^1$ in the role of $A$ and $A^1_\gamma$ in the role of $B$ to
obtain a finite-sheeted covering space $A'\to A^1$ (such as the
one denoted by $A_0$ in the statement of the theorem) and a
corresponding `completion' to a finite covering space $X'$ of
$X^1$. Now, by item 1 of Theorem \ref{thm: Trivial wall
projections}, the maps $A'\to X'$ and $A'_\gamma\to X'$ are
injective, and by item 3, $W_{X'}(A'_\gamma\to A')$ is trivial.

Since the above covering space $X'$ depends on the element
$\gamma$, we will denote it by $X^\gamma_1$ (although note that it
actually only depends on the double coset
$H\gamma G_1$).

In exactly the same way, we may also prove the following: for any $\gamma\in G$, there exists a finite-sheeted covering space $X'\to X$ such that:
\begin{enumerate}
\item the trivial elevation $A'\to X'$ of $A\to X$ is injective;
\item the elevation $B'_\gamma\to X'$ of $B\to X$ corresponding to the double coset $K \gamma\pi_1X'$ is injective;
\item the wall projections $W_{X'}(B'_\gamma\to A')$ are trivial.
\end{enumerate}

We will denote this covering space $X'$ by $X^\gamma_2$.

Now let $\overline{X}$ be the finite-sheeted covering space of $X$ such that
\[
\pi_1\overline{X} = \bigcap_{\gamma\notin
H G_1}\pi_1X^\gamma_1 \cap
\bigcap_{\gamma\in G}\pi_1X^\gamma_1
\]
which has the property that it covers every $X^\gamma_1$ and every $X^\gamma_2$. For any $\gamma\in G$, let $\overline{A}_\gamma\to \overline{X}$ be the elevation of $A\to X$ corresponding to $H\gamma\pi_1\overline{X}$ and let $\overline{B}_\gamma\to \overline{X}$ be the elevation of $B\to X$ corresponding to $H\gamma\pi_1\overline{X}$.   Let $\overline{A}\to \overline{X}$ be the elevation of $A\to X$ corresponding to the trivial coset.  By the construction of $\overline{X}$ and by Lemma \ref{lem: Trivial wall projections lifts}, we have that $W_{\overline{X}}(\overline{A}_\gamma\to\overline{A})$ is trivial whenever $\gamma\notin H G_1$, and $W_{\overline{X}}(\overline{B}_\gamma\to\overline{A})$ for all $\gamma$.

Finally, let $G_0=\pi_1C(\overline{A},\overline{X})$ and let $\rho$ be the map induced on fundamental groups by the canonical retraction $r$.   Theorem \ref{thm: Canonical completion} now follows immediately from Theorem \ref{thm: Wall projections}.
\end{proof}

\section{Malnormality in the profinite completion}\label{sec: Profinite malnormality}

In this section we prove that, for quasiconvex subgroups of hyperbolic, virtually special groups, malnormality passes to the profinite closure.  In preparation, we need to observe that the profinite completion of a torsion-free, virtually special group is itself torsion-free.  For this we need to explain Serre's notion of a good group, which we will need in the proof.

\begin{definition} \label{good}
A group $G$ is \emph{good}, if the natural homomorphism $G\rightarrow \widehat{G}$ of the group in its profinite completion induces an isomorphism on cohomology with finite coefficients.
\end{definition}

In particular, the profinite completion of a good group of finite cohomological dimension is of finite cohomological dimension and so is torsion free.  It follows quickly from the results of \cite{grunewald_cohomological_2008} and the standard theory of virtually special groups that a torsion-free, virtually special group is good.

\begin{proposition}\label{prop: VS good}
If a group $G$ has a finite-index subgroup $G_0$ which is the fundamental group of a compact, special cube complex then $G$ is good. If $G$ is torsion-free then so is $\wh{G}$.
\end{proposition}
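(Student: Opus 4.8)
The plan is to prove Proposition \ref{prop: VS good} in two stages, corresponding to its two assertions: first that $G$ is good, and then that torsion-freeness of $G$ forces torsion-freeness of $\wh{G}$. For the first assertion, the key input is that goodness is inherited in various ways. By hypothesis $G$ has a finite-index subgroup $G_0$ that is the fundamental group of a compact special cube complex. The fundamental group of a compact special cube complex embeds as a (virtual) retract in a right-angled Artin group (RAAG), by Haglund--Wise, so $G_0$ is a quasiconvex (virtual) retract of some RAAG $A_\Gamma$. Right-angled Artin groups are good: this follows from \cite{grunewald_cohomological_2008}, since they are built up by iterated amalgams/HNN extensions (or directly by an induction on the defining graph), and goodness is preserved under such constructions. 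The first step I would therefore carry out is to record that RAAGs are good, citing \cite{grunewald_cohomological_2008}.

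Next I would transfer goodness from the ambient RAAG to $G_0$. The relevant principle is that a retract of a good group is good: if $G_0$ is a retract of $A_\Gamma$, then the splitting of the inclusion $G_0 \into A_\Gamma$ on cohomology, compatible with the map to profinite completions, shows that $H^*(\wh{G_0};\,-)\to H^*(G_0;\,-)$ is an isomorphism once one knows it for $A_\Gamma$ and that the profinite topology on $A_\Gamma$ is well-behaved on $G_0$ (the subgroup is separable and its profinite topology agrees with the subspace topology, which holds for retracts). Here one must be slightly careful: $G_0$ is only a \emph{virtual} retract, so I would first pass to the relevant finite-index subgroup of $A_\Gamma$ in which $G_0$ is a genuine retract, and use that goodness is inherited by finite-index subgroups and overgroups. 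This yields that $G_0$ is good. Finally, goodness passes from a finite-index subgroup to the whole group: if $G_0\le G$ has finite index and $G_0$ is good, then $G$ is good (this is a standard fact, provable via the comparison of the Lyndon--Hochschild--Serre spectral sequences for $G_0 \lhd G$ and its profinite analogue, after replacing $G_0$ by a finite-index normal subgroup). Applying this gives goodness of $G$, completing the first assertion.

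For the second assertion, suppose $G$ is torsion-free. A torsion-free virtually special group has finite cohomological dimension: the compact special cube complex is a finite $K(G_0,1)$, so $\mathrm{cd}(G_0)<\infty$, and by Serre's theorem a torsion-free group with a finite-index subgroup of finite cohomological dimension itself has finite cohomological dimension, so $\mathrm{cd}(G)=n<\infty$. Since $G$ is good, the finite-coefficient cohomology of $\wh{G}$ agrees with that of $G$, hence vanishes above dimension $n$ for every finite coefficient module; this says precisely that the \emph{cohomological dimension of the profinite group} $\wh{G}$ is at most $n$, i.e. finite. Now I invoke the standard fact from profinite cohomology (Serre) that a profinite group of finite cohomological dimension is torsion-free: any nontrivial finite subgroup would contain a subgroup of prime order $C_p$, and $C_p$ has infinite $p$-cohomological dimension, contradicting that $\mathrm{cd}_p(\wh{G})\le \mathrm{cd}_p(C_p)$ must bound $\mathrm{cd}_p(C_p)$ from above via the finite-index/subgroup behaviour of cohomological dimension for profinite groups. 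Hence $\wh{G}$ is torsion-free.

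I expect the main obstacle to be the first stage, specifically the transfer of goodness along the (virtual) retract onto a RAAG and then up a finite-index inclusion: one must verify that the profinite topology interacts correctly with these operations (that the induced map on profinite completions is compatible with the retraction, and that separability of $G_0$ guarantees the subspace profinite topology coincides with the full profinite topology), so that the cohomology comparison maps genuinely fit into commutative squares. The remaining deductions — finiteness of cohomological dimension of $\wh{G}$ from goodness, and torsion-freeness from finite cohomological dimension — are then essentially formal applications of Serre's theory of profinite cohomological dimension.
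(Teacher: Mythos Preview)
Your proposal is correct and follows essentially the same route as the paper: the paper also deduces goodness of $G_0$ from the Haglund--Wise embedding into a RAAG together with the results of \cite{grunewald_cohomological_2008}, then passes goodness up the finite-index inclusion via \cite[Lemma~3.2]{grunewald_cohomological_2008}, and finally obtains torsion-freeness of $\wh{G}$ from finite cohomological dimension exactly as you do. The only difference is granularity: you spell out the virtual-retract transfer of goodness and the compatibility of profinite topologies, whereas the paper compresses all of this into a single citation of \cite{haglund_special_2008} and \cite[Theorem~1.4]{grunewald_cohomological_2008}.
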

\begin{proof}
Let $G_0$ be a subgroup of $G$ which is the fundamental group of a compact, special cube complex.   The results of \cite{haglund_special_2008} combined with \cite[Theorem 1.4]{grunewald_cohomological_2008} imply that $G_0$ is good, and therefore $G$ is also good by \cite[Lemma 3.2]{grunewald_cohomological_2008}.  If $G$ is torsion-free then it is of finite cohomological dimension; by goodness, $\wh{G}$ is also of finite cohomological dimension and hence torsion-free.
\end{proof}

We are now ready to prove the main theorem of this section.

\begin{theorem}\label{thm: Malnormal closure}
Let $G$ be word-hyperbolic and the fundamental group of a compact, virtually special cube complex. Let $\mathcal{H}=\{H_1,\ldots,H_n\}$ be a malnormal family of quasiconvex subgroups.  Then the family $\widehat{\mathcal{H}}=\{\widehat{H}_1,\ldots,\widehat{H}_n\}$ is a malnormal family of subgroups of the profinite completion $\widehat{G}$.
\end{theorem}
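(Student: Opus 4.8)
The plan is to deduce malnormality of $\widehat{\mathcal H}$ directly from the retractions produced by Theorem \ref{thm: Canonical completion}, using them to show that a nontrivial intersection $\widehat{H_i}\cap\widehat{H_j}^{\,g}$ would be simultaneously fixed and annihilated by one continuous retraction. Fix a conjugator $g\in\widehat G$ and indices $i,j$, and set $D=\widehat{H_i}\cap\widehat{H_j}^{\,g}$; it suffices to prove $D=1$ whenever we are \emph{not} in the permitted case $i=j,\ g\in\widehat{H_i}$. First I would record the separability inputs. Since quasiconvex subgroups of a hyperbolic virtually special group are separable, each $\widehat{H_i}$ is the closure $\overline{H_i}$ of $H_i$ in $\widehat G$, and for any finite-index $G_0\le G$ (so $\widehat{G_0}=\overline{G_0}$ is open) one has $\overline{H_i}\cap\widehat{G_0}=\overline{H_i\cap G_0}$, because $\overline{H_i}\cap\widehat{G_0}$ is open in $\overline{H_i}$ and $\overline{H_i}\cap\widehat{G_0}\cap G=H_i\cap G_0$ is dense in it. I will also use the identity $\overline{H_iG_1}=\widehat{H_i}\,\widehat{G_1}$ for finite-index $G_1$, which holds since multiplication is continuous and $\widehat{H_i}\widehat{G_1}$ is compact.

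Given a finite-index $G_1$, Theorem \ref{thm: Canonical completion} with $H=H_i$, $K=H_j$ provides $G_0\subseteq G_1$ and a retraction $\rho:G_0\to H_i\cap G_1$; since $H_i\cap G_1\subseteq G_0$ one gets $H_i\cap G_0=H_i\cap G_1$ and $\rho$ restricts to the identity there. Completing gives a continuous homomorphism $\widehat\rho:\widehat{G_0}\to\overline{H_i\cap G_1}$ that is the identity on $\overline{H_i\cap G_0}=\widehat{H_i}\cap\widehat{G_0}$. The heart of the matter is to upgrade properties (1) and (2), stated for $G$-conjugators $\gamma$, to arbitrary $\widehat G$-conjugators. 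For (2) I claim $\widehat\rho\big(\widehat{H_j}^{\,g}\cap\widehat{G_0}\big)=1$ for all $g\in\widehat G$: given $y=k^{\,g}$ in this set, approximate $k=\lim k_t$ with $k_t\in H_j$ and $g=\lim\gamma_t$ with $\gamma_t\in G$; then $k_t^{\gamma_t}\to y$, so $k_t^{\gamma_t}$ eventually lies in the open set $\widehat{G_0}$, hence in $H_j^{\gamma_t}\cap G_0$, where (2) gives $\rho(k_t^{\gamma_t})=1$, and continuity yields $\widehat\rho(y)=1$. The same approximation, using that $\overline{H_iG_1}=\widehat{H_i}\widehat{G_1}$ is closed, promotes (1) to: $\widehat\rho\big(\widehat{H_i}^{\,g}\cap\widehat{G_0}\big)=1$ whenever $g\notin\widehat{H_i}\widehat{G_1}$ (for then $\gamma_t\notin H_iG_1$ eventually).

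With these in hand I would choose $G_1$ so that $D\subseteq\widehat{G_1}$ and, in the diagonal case, also $g\notin\widehat{H_i}\widehat{G_1}$. Both can be arranged at once precisely because $D\subseteq\widehat{H_i}$: since $g\notin\widehat{H_i}$, take an open normal $N$ with $g\notin\widehat{H_i}N$ and set $U=\langle D\rangle N$, an open subgroup containing $D$; as $\langle D\rangle\subseteq\widehat{H_i}$ we get $\widehat{H_i}U\subseteq\widehat{H_i}N\not\ni g$, so $G_1=U\cap G$ works. Then $D\subseteq\widehat{H_i}\cap\widehat{G_1}=\overline{H_i\cap G_1}\subseteq\widehat{G_0}$. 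On $D$ the map $\widehat\rho$ is therefore the identity (as $D\subseteq\widehat{H_i}\cap\widehat{G_0}$), while it is trivial (as $D\subseteq\widehat{H_j}^{\,g}\cap\widehat{G_0}$, together with $g\notin\widehat{H_i}\widehat{G_1}$ in the diagonal case). Hence $D=1$. In the pair case $i\ne j$ only property (2) is used, and no constraint on $g$ is needed, so $\widehat{H_i}\cap\widehat{H_j}^{\,g}=1$ for every $g$.

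I expect the main obstacle to be exactly this promotion of the retraction's annihilation property from rational conjugators $\gamma\in G$ to profinite conjugators $g\in\widehat G$, combined with selecting a single finite-index $G_1$ that makes $\widehat\rho$ at once the identity on and trivial on $D$. The apparent tension—containing $D$ wants $G_1$ large, while separating $g$ from $\widehat{H_i}$ wants $G_1$ small—is resolved only by exploiting $D\subseteq\widehat{H_i}$, which keeps the two demands compatible. It remains to confirm the separability facts cited above and that $\rho$ induces a continuous map on completions; these are routine for virtually special $G$. (If $G$ is torsion-free one could alternatively rule out nontrivial finite $D$ via Proposition \ref{prop: VS good}, which gives $\widehat G$ torsion-free, but the argument above already yields $D=1$ directly.)
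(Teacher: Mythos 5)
Your proposal is correct, and its engine is the same as the paper's: apply Theorem \ref{thm: Canonical completion}, extend the retraction continuously to $\widehat{G}_0$, and use that $\overline{H_iG_1}$ is clopen to promote properties (1) and (2) from conjugators in $G$ to conjugators in $\widehat{G}$ (the paper compresses exactly this promotion into the phrase ``by continuity using that the closure $\overline{HG_1}$ is clopen''). Where you genuinely diverge is the endgame. The paper chooses $G_1=\ker q$ only so as to separate $\hat{\gamma}$ from $\widehat{H}$; an element $\hat{\delta}$ of the offending intersection then need not lie in $\widehat{G}_0$, so the paper passes to a power $\hat{\delta}^n\in\widehat{G}_0$, deduces $\hat{\delta}^n=1$, and must invoke torsion-freeness of $\widehat{G}$ (Proposition \ref{prop: VS good}, i.e.\ goodness of virtually special groups) to conclude $\hat{\delta}=1$. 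You instead exploit $D\subseteq\widehat{H}_i$ to choose $G_1=(\langle D\rangle N)\cap G$ with $N$ open normal and $g\notin\widehat{H}_iN$, which simultaneously forces $D\subseteq\widehat{H}_i\cap\widehat{G}_1=\overline{H_i\cap G_1}\subseteq\widehat{G}_0$ and keeps $g\notin\widehat{H}_i\widehat{G}_1$; then $\widehat{\rho}$ is at once the identity on $D$ and trivial on $D$, so $D=1$ outright. What this buys is a proof of the theorem that nowhere uses goodness or torsion-freeness of $\widehat{G}$, replacing that cohomological input by an elementary topological choice of $G_1$; the paper's finish is shorter given that Proposition \ref{prop: VS good} is established anyway (and needed elsewhere), but yours is more self-contained and would survive in settings where torsion-freeness of the completion is unavailable. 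The only point you defer as ``routine'' --- the identification $\widehat{H}_i\cong\overline{H_i}$, which requires separability of the finite-index subgroups of $H_i$ (e.g.\ via quasiconvex subgroups of virtually special groups being virtual retracts), not merely separability of $H_i$ itself --- is presupposed equally by the paper's statement and proof, so this is not a gap relative to the paper.
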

\begin{proof}
It suffices to take $n=2$.  First, we prove that $\widehat{H}=\widehat{H}_1$ is a malnormal subgroup of $\widehat{G}$.

Let $\hat{\gamma}\in \widehat{G}\smallsetminus
\widehat{H}$, and suppose that
$\hat{\delta}\in\widehat{H}\cap\widehat{H}^{\hat{\gamma}}$,
so $\hat{\delta}=\hat{\epsilon}^{\hat{\gamma}}$ for
$\hat{\epsilon}\in\widehat{H}$.   Since $\widehat{H}$ is
closed, there exists a finite quotient $q:G\to Q$ whose
continuous extension $\hat{q}:\widehat{G}\to Q$ satisfies
$\hat{q}(\hat{\gamma})\notin q(H)$.  Let $G_1=\ker q$,
and let $G_0$ be the finite-index subgroup guaranteed by
Theorem \ref{thm: Canonical completion}. Let $n$ be such that
$\hat{\delta}^n\in\widehat{G}^0$.   If $\hat{\rho}$ is the
continuous extension of $\rho$ to $\widehat{G}^0$ then
\[
\hat{\delta}^n=\hat{\rho}(\hat{\delta}^n)=\hat{\rho}((\hat{\epsilon}^n)^{\hat{\gamma}}))=1
\]
where the final equality follows from item 1 of Theorem \ref{thm:
Canonical completion} by continuity using that the closure
$\overline{H G_1}$ is clopen in $\widehat{G}$ . So
$\hat{\delta}$ is torsion and therefore trivial, since
$\widehat{G}$ is torsion-free by Proposition \ref{prop: VS good}. This proves that
$\widehat{H}_1$ is malnormal.

To complete the proof that $\widehat{\mathcal{H}}$ is a malnormal
family, suppose that $\hat{\delta}=\hat{\epsilon}^{\hat{\gamma}}$,
where $\hat{\delta}\in\widehat{H}_1$ and
$\hat{\epsilon}\in\hat{H}_2$.  Let $G_1=G$, and as
before let $G_0$ be the finite-index subgroup guaranteed by
Theorem \ref{thm: Canonical completion} and let $n$ be such that
$\hat{\delta}^n\in\widehat{G}^0$.  Then, as before, we have
that
\[
\hat{\delta}^n=\hat{\rho}(\hat{\delta}^n)=\hat{\rho}((\hat{\epsilon}^n)^{\hat{\gamma}}))=1
\]
where the final inequality follows from item 2 of Theorem \ref{thm: Canonical completion} by continuity.  Again, since $\widehat{G}$ is torsion-free, we deduce that $\hat{\delta}=1$, which proves the theorem.
\end{proof}

In the statement of Theorem \ref{thm: Malnormal closure}, the group $G$ is assumed to be the fundamental group of a compact, virtually special cube complex.  In particular, $G$ is torsion-free.  However, we can weaken the hypotheses on $G$ in a small but significant way, and instead assume that $G$ is merely virtually special, meaning that it has a subgroup of finite index which is the fundamental group of a compact special cube complex.  In particular, such a $G$ may have torsion.  As mentioned above, in this context it is preferable to work with almost malnormal families of subgroups.

We can quickly deduce a similar result in this setting, which will be useful later.

\begin{corollary}\label{cor: Almost malnormal closure}
Let $G$ be word-hyperbolic and virtually special. Let $\mathcal{H}=\{H_1,\ldots,H_n\}$ be an (almost) malnormal family of quasiconvex subgroups.  Then the family $\widehat{\mathcal{H}}=\{\widehat{H}_1,\ldots,\widehat{H}_n\}$ is an (almost) malnormal family of subgroups of the profinite completion $\widehat{G}$.
\end{corollary}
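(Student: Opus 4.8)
The plan is to deduce the corollary from Theorem~\ref{thm: Malnormal closure} by passing to a torsion-free finite-index subgroup, where that theorem applies, and then transferring the conclusion back up. Since $G$ is virtually special, it has a subgroup $G_0$ of finite index that is the fundamental group of a compact special cube complex; such a $G_0$ is word-hyperbolic (being finite-index in $G$) and, by Proposition~\ref{prop: VS good}, torsion-free. I may as well take $G_0\trianglelefteq G$. I would also discard any finite $H_i$ at the outset, since finite members are harmless in an (almost) malnormal family and contribute only finite intersections; so assume each $H_i$ is infinite.

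First I would manufacture a malnormal family inside $G_0$. Using normality of $G_0$, $H_i^g\cap G_0=(H_i\cap G_0)^g$ for $g\in G$; as $g$ ranges over $G$ this depends, up to $G_0$-conjugacy, only on the double coset in $H_i\backslash G/G_0$, of which there are finitely many. Let $\mathcal K$ be the resulting finite set of subgroups $(H_i\cap G_0)^{g_{i,k}}$, one for each such double coset. Each is quasiconvex in $G_0$, being an intersection of a quasiconvex conjugate $H_i^{g}$ with the finite-index subgroup $G_0$. I would check $\mathcal K$ is malnormal directly: an intersection $(H_i\cap G_0)^{g}\cap\big((H_j\cap G_0)^{g'}\big)^x$ with $x\in G_0$ equals $H_i^{g}\cap H_j^{g'x}\cap G_0$, and if this is nontrivial then, as $G_0$ is torsion-free, $H_i^{g}\cap H_j^{g'x}$ is infinite, so almost malnormality of $\mathcal H$ forces $i=j$ and $g'xg^{-1}\in H_i$; a short double-coset computation then shows the two members coincide and that $x$ lies in that member. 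Thus $\mathcal K$ is a malnormal family of quasiconvex subgroups of $G_0$, and Theorem~\ref{thm: Malnormal closure} gives that $\widehat{\mathcal K}$ is malnormal in $\widehat{G_0}$.

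To transfer, I would use the standard facts that $\widehat{G_0}$ is open and normal in $\widehat G$ with $\widehat G/\widehat{G_0}\cong G/G_0$, that $\widehat{H_i\cap G_0}=\widehat{H_i}\cap\widehat{G_0}$ (both are open of the same finite index in $\widehat{H_i}$, one containing the other), and that $\widehat{H_j}^{\hat g}\cap\widehat{G_0}=(\widehat{H_j}\cap\widehat{G_0})^{\hat g}$ for $\hat g\in\widehat G$ by normality. Now suppose $\widehat{H_i}\cap\widehat{H_j}^{\hat\gamma}$ is infinite for some $\hat\gamma\in\widehat G$. Writing $\hat\gamma=g\hat x$ with $g\in G$ a coset representative and $\hat x\in\widehat{G_0}$, and intersecting with the open subgroup $\widehat{G_0}$, the intersection $(\widehat{H_i}\cap\widehat{G_0})\cap\big((\widehat{H_j}\cap\widehat{G_0})^{g}\big)^{\hat x}$ is again infinite. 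Since $(\widehat{H_j}\cap\widehat{G_0})^{g}$ is a $\widehat{G_0}$-conjugate of a member of $\widehat{\mathcal K}$, malnormality of $\widehat{\mathcal K}$ in $\widehat{G_0}$ yields both that $i=j$ and that the relevant double coset is trivial, that is $g\in H_iG_0$, together with the fact that a corrected conjugator $x_0\hat x$ lies in $\widehat{H_i}\cap\widehat{G_0}$, where $x_0\in G_0$ comes from the $G_0$-conjugacy identification $(H_i\cap G_0)^{g}=(H_i\cap G_0)^{x_0}$.

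The main obstacle is the final step: extracting $\hat\gamma\in\widehat{H_i}$ from this $\widehat{G_0}$-level conclusion. The point is that the identity $(H_i\cap G_0)^{g}=(H_i\cap G_0)^{x_0}$ says only that $gx_0^{-1}\in N_G(H_i\cap G_0)$, and a priori the normalizer of a finite-index subgroup can be strictly larger. Here I would invoke almost malnormality of $\mathcal H$ a second time: if $y$ normalizes the infinite group $H_i\cap G_0$, then $H_i^y\cap H_i\supseteq H_i\cap G_0$ is infinite, whence $y\in H_i$; thus $N_G(H_i\cap G_0)\subseteq H_i$ and $gx_0^{-1}\in H_i$. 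Combining $gx_0^{-1}\in H_i\subseteq\widehat{H_i}$ with $x_0\hat x\in\widehat{H_i}\cap\widehat{G_0}$ gives $\hat\gamma=g\hat x=(gx_0^{-1})(x_0\hat x)\in\widehat{H_i}$, completing the verification that $\widehat{\mathcal H}$ is almost malnormal. When $\mathcal H$ is genuinely malnormal and $G$ is torsion-free, $\widehat G$ is torsion-free by Proposition~\ref{prop: VS good}, and the same bookkeeping upgrades the conclusion to malnormality, recovering Theorem~\ref{thm: Malnormal closure}.
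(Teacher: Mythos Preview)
Your proposal is correct and follows essentially the same route as the paper: pass to a finite-index special subgroup $G_0$, form the finite family $\mathcal{K}$ of subgroups $H_i^{\gamma_{ij}}\cap G_0$ indexed by double cosets $H_i\backslash G/G_0$, apply Theorem~\ref{thm: Malnormal closure} to $\mathcal{K}$ in $G_0$, and then transfer back up to $\widehat G$. The paper records only the first three steps and asserts the last, whereas you carry out the transfer in detail; note, incidentally, that your ``main obstacle'' dissolves if you track $x_0$ as the $G_0$-part of the decomposition $g=h\,g_{j,k}\,x_0$ (so that once malnormality of $\widehat{\mathcal K}$ forces $g_{j,k}=1$, you get $gx_0^{-1}=h\in H_i$ for free), but your normalizer argument is also perfectly valid.
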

\begin{proof}
Let $G_0$ be a subgroup of finite index in $G$ that is the fundamental group of a compact special cube complex.   For each $i$ let $\{\gamma_{ij}\}$ be a set of representatives for the double coset space $H_i\backslash G/G_0$. For each $i,j$, let $K_{ij}=H_i^{\gamma_{ij}}\cap G_0$.  Then $\mathcal{K}=\{K_{ij}\}$ is a malnormal family of quasiconvex subgroups $G_0$.  By Theorem \ref{thm: Malnormal closure} it follows that $\wh{\mathcal{K}}=\{\wh{K}_{ij}\}$ is a malnormal family of subgroups of the profinite completion $\wh{G}_0$.  It follows that $\wh{\mathcal{H}}$ is an (almost) malnormal family of subgroups of $\wh{G}$.
\end{proof}

Likewise, we can deduce that the notion of \emph{finite width} also passes to the profinite completion.

\begin{corollary}\label{cor: Finite width closure}
Let $G$ be word-hyperbolic and virtually special, and let $H$ be a quasiconvex subgroup of $G$.  Then $\widehat{H}$ has finite width in the profinite completion $\widehat{G}$.
\end{corollary}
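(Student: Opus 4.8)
The plan is to reduce the finite-width statement for $\widehat{H}$ to the malnormal case, which has already been transferred to the profinite completion in Theorem \ref{thm: Malnormal closure}, and then to recover finite width by a coset-counting argument inside $\widehat{G}$. The only genuinely hard content sits in Theorem \ref{thm: Malnormal closure} (and behind it the canonical-completion machinery of Section \ref{sec: Canonical completion}), so once that is granted the remaining work is essentially bookkeeping.

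First I would assemble the classical inputs. By Theorem \ref{thm: GMRS}, $H$ has finite width in $G$. To be able to apply Lemma \ref{lem: Virtually malnormal}, which is stated for torsion-free groups, I would pass to a finite-index subgroup $G'\le G$ that is the fundamental group of a compact special cube complex (in particular torsion-free), and replace $H$ by $H'=H\cap G'$. This $H'$ is again quasiconvex, of finite index in $H$, of finite width in $G'$, and separable, since quasiconvex subgroups of virtually special groups are separable. Lemma \ref{lem: Virtually malnormal}, applied to the separable finite-width subgroup $H'$ of the torsion-free group $G'$, then yields a finite-index subgroup $G_0\le G'$ containing $H'$ in which $H'$ is malnormal. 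As $G_0$ is itself word-hyperbolic and the fundamental group of a compact virtually special cube complex, Theorem \ref{thm: Malnormal closure} applies to the one-element malnormal family $\{H'\}$ and shows that $\widehat{H'}$ is malnormal in $\widehat{G_0}$.

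Next I would transfer this back to $\widehat{G}$. Since $G_0$ has finite index in $G$, its closure is an open subgroup $\widehat{G_0}\le\widehat{G}$ of index $d=[G:G_0]$, and $\widehat{H'}\le\widehat{G_0}$. The heart of the argument is the elementary observation that almost malnormality in a finite-index subgroup forces finite width in the whole group, with bound $d$. Concretely, suppose $\widehat{H'}\hat\gamma_1,\ldots,\widehat{H'}\hat\gamma_m$ are distinct cosets with all pairwise intersections $\widehat{H'}^{\hat\gamma_i}\cap\widehat{H'}^{\hat\gamma_j}$ infinite. Writing each $\hat\gamma_i=\hat g_i\hat\delta_{a(i)}$ with $\hat g_i\in\widehat{G_0}$ and $\hat\delta_1,\ldots,\hat\delta_d$ a transversal for $\widehat{G_0}$ in $\widehat{G}$, malnormality of $\widehat{H'}$ inside $\widehat{G_0}$ shows that whenever $a(i)=a(j)$ the intersection is finite unless $\widehat{H'}\hat\gamma_i=\widehat{H'}\hat\gamma_j$; since the cosets are distinct, each value of $a$ is attained at most once, so $m\le d$. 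Thus $\widehat{H'}$ has width at most $d$ in $\widehat{G}$.

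Finally, because $H'$ has finite index in $H$, the closure $\widehat{H'}$ has finite index in $\widehat{H}$, and any clique of distinct cosets witnessing large width for $\widehat{H}$ refines to one for $\widehat{H'}$ with infinite pairwise intersections preserved; hence $\mathrm{width}(\widehat{H})\le\mathrm{width}(\widehat{H'})\le d<\infty$, which is the claim. The step I expect to require the most care is not conceptual but technical: ensuring that the torsion-free reduction is legitimate (that $\widehat{G_0}$ really is the open subgroup of $\widehat{G}$ it should be, and that width is insensitive to passing between $\widehat{H'}$ and $\widehat{H}$), and in particular making sure the coset computation invokes malnormality of $\widehat{H'}$ \emph{within the open subgroup} $\widehat{G_0}$ rather than incorrectly asserting it in all of $\widehat{G}$.
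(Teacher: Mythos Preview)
Your proposal is correct and follows essentially the same route as the paper: use separability and finite width to find a finite-index subgroup in which the subgroup is malnormal, transfer malnormality to the profinite completion, and then do the coset-counting (injection into $\widehat{G}_0\backslash\widehat{G}$) to bound the width. The only cosmetic difference is that the paper works directly with $H$ and invokes Corollary~\ref{cor: Almost malnormal closure} (so it gets almost malnormality of $\widehat{H}$ in $\widehat{G}_0$ without passing to $H'$), whereas you first drop to the torsion-free $G'$ and $H'=H\cap G'$ in order to quote Theorem~\ref{thm: Malnormal closure} and Lemma~\ref{lem: Virtually malnormal} as stated; your extra care here is harmless and arguably tidier.
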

\begin{proof}
Since $H$ is separable in $G$, by Lemma \ref{lem: Virtually malnormal}, we may pass to a subgroup $G_0$ of finite index in $G$ in which $H$ is malnormal. Therefore, by Corollary \ref{cor: Almost malnormal closure}, $\widehat{H}$ is malnormal in $\widehat{G}_0$.  Suppose now that $\{\widehat{H}\hat{\gamma}_i\}$ is a subset of $\widehat{H}\backslash\widehat{G}$ and $\widehat{H}^{\hat{\gamma}_i}\cap\widehat{H}^{\hat{\gamma}_j}$ is infinite for all $i$ and $j$.  Then $\widehat{H}^{\hat{\gamma}_i\hat{\gamma}_j^{-1}}\cap\widehat{H}$ is also infinite, whence $\hat{\gamma}_i\hat{\gamma}_j^{-1}\notin\widehat{G}_0$ if $i\neq j$.  Therefore, the  map
\[
\{\widehat{H}\hat{\gamma}_i\}\to\widehat{G}_0\backslash\widehat{G}\cong G_0\backslash G
\]
induced by the inclusion $\widehat{H}\to\widehat{ G}_0$ is an injection.  This completes the proof.
\end{proof}

\section{Malnormality in the relative case}

In order to deal with cusped hyperbolic manifolds, we will also need a relative versions of the results of the previous section.  A \emph{toral relatively hyperbolic} is a group that is torsion-free and hyperbolic relative to sets of finitely generated abelian subgroups.  We refer the reader to \cite{hruska_relative_2010} for a survey of the various equivalent definitions of relative hyperbolicity.

There is a notion of a \emph{relatively quasiconvex} subgroup of a relatively hyperbolic group.  Again, the reader is referred to \cite{hruska_relative_2010} for various equivalent definitions. We will also be interested in a relatively hyperbolic version of malnormality.

\begin{definition}
Suppose that a group $G$ is hyperbolic relative to a collection of \emph{parabolic} subgroups $\{P_1,\ldots,P_n\}$.  A subgroup $H$ of $G$ is called \emph{relatively malnormal} if, whenever an intersection of conjugates $H^\gamma\cap H$ is not conjugate into some $P_i$, we have $\gamma\in H$.
\end{definition}

The next theorem, which is the main result of this section, is an analogue of Theorem \ref{thm: Malnormal closure} in the toral relatively hyperbolic setting.

\begin{theorem}\label{thm: Profinite relative malnormality}
Suppose that $G$ is a virtually compact special group, which is also toral relatively hyperbolic with parabolic subgroups $P_1,\ldots,P_n$.  Let $H$ be a subgroup which is relatively malnormal and relatively quasiconvex.  Then $\wh{H}$ is also a relatively malnormal subgroup of $\wh{G}$, in the sense that $\wh{H}\cap\wh{H}^{\hat{\gamma}}$ {is conjugate into  $\wh{P}_i$} (for some $i$) whenever $\wh{\gamma}\notin\wh{H}$.
\end{theorem}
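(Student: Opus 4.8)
The plan is to reduce Theorem \ref{thm: Profinite relative malnormality} to the non-relative case (Theorem \ref{thm: Malnormal closure} and Corollary \ref{cor: Almost malnormal closure}) via a Dehn filling argument. The obstruction to applying the earlier results directly is that $G$ is only relatively hyperbolic, not word-hyperbolic: the parabolic subgroups $\wh P_i$ are abelian of positive rank, so conjugates of $\wh H$ can intersect in infinite parabolic subgroups without $\wh\gamma$ lying in $\wh H$. The strategy is to kill the parabolics by passing to suitable peripheral quotients, where the quotient group becomes genuinely hyperbolic and virtually special, deduce honest malnormality of the image of $\wh H$ there, and then lift this back to control $\wh H\cap\wh H^{\hat\gamma}$ modulo the parabolics.

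First I would fix $\hat\gamma\in\wh G\setminus\wh H$ and suppose, toward contradiction, that $\wh H\cap\wh H^{\hat\gamma}$ is not conjugate into any $\wh P_i$; the goal is to find a single finite-index subgroup and a retraction, as in Theorem \ref{thm: Canonical completion}, that forces any such intersection to be trivial. The key reduction uses the relative version of Dehn filling for virtually special groups: by Wise's Malnormal Special Quotient Theorem together with the combinatorial Dehn filling of Agol--Groves--Manning, there exist finite-index \emph{filling kernels} $N_i\trianglelefteq P_i$ so that the quotient $\bar G=G/\langle\langle N_1,\dots,N_n\rangle\rangle$ is word-hyperbolic and virtually special, the map $G\to\bar G$ is injective on $H$ and sends $H$ to a quasiconvex (almost) malnormal subgroup $\bar H$, and enough of the coset structure is preserved that separability is available. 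Since $G$ is toral, one arranges the filling so that the images $\bar P_i$ are finite and $\bar H$ is almost malnormal; Corollary \ref{cor: Almost malnormal closure} then applies to $\bar G$ to give that $\wh{\bar H}$ is almost malnormal in $\wh{\bar G}$.

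The main work is then to transfer this back across the profinite completion of the filling map. Because $G$ is good and the fillings can be chosen cofinally (shrinking the $N_i$), one obtains that $\wh G$ is the inverse limit of the groups $\wh{\bar G}$ over a directed system of fillings, with kernels controlled by the closures $\wh N_i\leq\wh P_i$. If $\hat\delta\in\wh H\cap\wh H^{\hat\gamma}$ survives to a nontrivial almost-malnormal obstruction in every filling, then its image in $\wh{\bar G}$ must lie in the (finite) image of some $\wh P_i$; taking the inverse limit over all sufficiently deep fillings forces $\hat\delta$ into $\bigcap\wh N_i\cdot(\text{parabolic})$, which—by cofinality of the fillings—places $\hat\delta$ inside a conjugate of some $\wh P_i$. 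This is exactly the relatively malnormal conclusion. Concretely, one would run the retraction argument of Theorem \ref{thm: Malnormal closure} inside a fixed deep filling $\bar G$ to kill the $\bar H^{\bar\gamma}\cap\bar H$ part, and separately track that the only surviving contributions to $\wh H\cap\wh H^{\hat\gamma}$ come from the peripheral directions $\wh P_i$.

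The hard part will be the compatibility of the canonical-completion retraction with the filling: one needs the finite-index subgroup $G_0$ and retraction $\rho$ produced by Theorem \ref{thm: Canonical completion} (applied in $\bar G$) to pull back to a retraction on a finite-index subgroup of $G$ that is defined on all of $\wh H\cap\wh H^{\hat\gamma}$ except the parabolic part, and for this one must ensure the separability of the relevant peripheral double cosets and the clopenness of $\overline{HG_1}$ relative to the parabolic subgroups. Establishing that the closures $\wh P_i$ inject and that the filling map $\wh G\to\wh{\bar G}$ behaves well on these closures—so that ``conjugate into $\wh P_i$'' is a closed, limit-stable condition—is where the genuinely relatively-hyperbolic input (rather than a formal translation of the closed case) is required, and I expect the bulk of the technical effort to concentrate there.
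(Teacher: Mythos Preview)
Your proposal is essentially the paper's own argument: Dehn fill the parabolics using Wise's (relative) Malnormal Special Quotient Theorem together with Groves--Manning so that the quotient $\bar G$ is word-hyperbolic and virtually special with $\bar H=\eta(H)$ quasiconvex and almost malnormal, apply Corollary~\ref{cor: Almost malnormal closure} in $\wh{\bar G}$, and then pass to the limit over deeper and deeper fillings.

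The one place where you diverge from the paper is your last paragraph, and there you are making life harder than necessary. You propose to pull the canonical-completion retraction from $\bar G$ back to $G$ and worry about compatibility, separability of peripheral double cosets, and clopenness of $\overline{HG_1}$ relative to the parabolics. The paper avoids all of this. Once $\wh{\bar H}$ is almost malnormal in $\wh{\bar G}$, the image $\hat\eta(\hat h)$ of any $\hat h\in\wh H\cap\wh H^{\hat\gamma}$ has \emph{finite order} in $\wh{\bar G}$; and since the only torsion in $\bar G$ (hence in $\wh{\bar G}$, by goodness) comes from the finite images of the parabolics, $\hat\eta(\hat h)$ is conjugate into some $\hat\eta(\wh P_i)$. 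Now one simply lets the filling (equivalently, the finite quotient $q$) get arbitrarily deep: the condition ``$\hat q(\hat h)$ is conjugate into $\hat q(\wh P_i)$'' holds in every finite quotient of $G$, so $\hat h$ itself is conjugate into some $\wh P_i$. No retraction needs to be transported across the filling, and no delicate separability of peripheral double cosets is required---the argument is a straightforward limit over finite quotients, exactly parallel to the way Theorem~\ref{thm: Malnormal closure} uses continuity, but with ``trivial'' replaced by ``torsion, hence peripheral''.
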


One could envisage a proof of this theorem along the lines of the techniques of Section \ref{sec: Profinite malnormality}, again using Haglund and Wise's canonical completion.  However, this would require a generalization of Theorem \ref{thm: Trivial wall projections} to the relatively hyperbolic setting, which is not currently in the literature. Instead, we will appeal to \cite[Lemma 16.13]{wise_structure_2012}, which is a relatively hyperbolic version of the Malnormal Special Quotient Theorem of Wise \cite[Theorem 12.3]{wise_structure_2012}.

\begin{theorem}\label{thm: RHMSQT}
Suppose $G$ is toral relatively word-hyperbolic and virtually compact special and $\mathcal{P}=\{P_1,\ldots,P_k\}$ is an almost malnormal family of quasiconvex subgroups of $G$. There are subgroups of finite index $K_i\subseteq P_i$ such that, for all subgroups of finite index $L_i\subseteq K_i$, the quotient
\[
G/\langle\langle L_1,\ldots,L_n\rangle\rangle
\]
is word-hyperbolic and virtually the fundamental group of a compact special cube complex.
\end{theorem}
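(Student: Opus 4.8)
The plan is to derive the theorem from Wise's relatively hyperbolic Malnormal Special Quotient Theorem, \cite[Lemma 16.13]{wise_structure_2012}, which gives exactly this filling conclusion under the stronger hypothesis that $G$ is itself the fundamental group of a compact special cube complex. (Here filling along $\mathcal{P}$ is ordinary relatively hyperbolic Dehn filling, the $P_i$ playing the role of the parabolic subgroups of the toral structure, which are automatically almost malnormal and relatively quasiconvex.) The one substantial point to supply is the promotion of the result from the compact special case to the merely virtually compact special case, and the mechanism for this is the compatibility of relatively hyperbolic Dehn filling with passage to a finite-index subgroup.

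First I would fix a finite-index normal subgroup $G_0\trianglelefteq G$ that is the fundamental group of a compact special cube complex. Then $G_0$ is again toral relatively hyperbolic, its parabolic subgroups being the representatives $Q_{ij}=P_i^{g_{ij}}\cap G_0$ of the finitely many $G_0$-conjugacy classes obtained as $g_{ij}$ ranges over double-coset representatives for $P_i\backslash G/G_0$. The family $\{Q_{ij}\}$ is an almost malnormal family of relatively quasiconvex subgroups of $G_0$, since almost malnormality and relative quasiconvexity are inherited by conjugates and intersections within a finite-index subgroup. Applying \cite[Lemma 16.13]{wise_structure_2012} to $G_0$ with this family yields finite-index subgroups $M_{ij}\subseteq Q_{ij}$ such that every sufficiently deep filling of $G_0$ along subgroups of the $M_{ij}$ is word-hyperbolic and virtually compact special.

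Next I would descend these subgroups to the $P_i$. As each $P_i\cap G_0$ has finite index in $P_i$, I can choose finite-index $K_i\subseteq P_i\cap G_0$ arranged so that each conjugate $K_i^{g_{ij}}\cap G_0$ lies inside the corresponding $M_{ij}$. Given finite-index $L_i\subseteq K_i$, set $N=\langle\langle L_1,\ldots,L_n\rangle\rangle$ and $\bar G=G/N$. The image $\bar G_0$ of $G_0$ has finite index in $\bar G$. The key point is that $\bar G_0$ is itself the relatively hyperbolic Dehn filling of $G_0$ determined by the induced peripheral fillings $\{L_i^{g_{ij}}\cap G_0\}$; this compatibility of Dehn filling with passage to a finite-index subgroup is part of the Agol--Groves--Manning machinery. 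Since each $L_i^{g_{ij}}\cap G_0$ is a finite-index subgroup of $Q_{ij}$ contained in $M_{ij}$, the filling $\bar G_0$ is one of the good fillings of $G_0$ produced in the previous step, hence word-hyperbolic and virtually compact special. As $\bar G_0$ has finite index in $\bar G$, the group $\bar G$ is then word-hyperbolic and virtually the fundamental group of a compact special cube complex, as required.

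I expect the main obstacle to be precisely this third step: establishing that the filling $N$ of $G$ restricts to a genuine Dehn filling of $G_0$, with the induced peripheral fillings deep enough to lie inside the $M_{ij}$, so that Lemma 16.13 applies to $\bar G_0$. This is where the compatibility of Dehn filling with finite covers must be combined with the inheritance of almost malnormality and relative quasiconvexity by the parabolics of $G_0$, and with the bookkeeping that guarantees the induced fillings are sufficiently deep; the remaining steps are formal.
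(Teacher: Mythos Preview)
The paper's proof is a single sentence: ``This is a special case of \cite[Lemma 16.13]{wise_structure_2012}.'' You have assumed that Wise's lemma requires $G$ itself to be compact special, and then devoted the entire argument to promoting the conclusion from a compact special finite-index normal subgroup $G_0$ back up to $G$. The authors evidently read Wise's hypotheses as already allowing $G$ to be virtually compact special, so no such reduction is needed and the theorem is a direct citation.

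If your reading of Wise's hypotheses were the correct one and a reduction really were required, the strategy is reasonable, but the step you yourself flag is genuinely incomplete as written. You need $N\cap G_0$ to equal the $G_0$-normal closure of the conjugated filling kernels $\{L_i^{g_{ij}}\cap G_0\}$. Writing an arbitrary $g\in G$ as $g=p\,g_{ij}\,g_0$ with $p\in P_i$ and $g_0\in G_0$ (using that the $g_{ij}$ are $(P_i,G_0)$-double-coset representatives), one gets $L_i^{g}=(L_i^{\,p})^{g_{ij}g_0}$, and this is a $G_0$-conjugate of $L_i^{g_{ij}}$ only if $L_i$ is normal in $P_i$, which you have not arranged. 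Passing to cores would let you prove the result for $L_i\trianglelefteq P_i$, but the theorem asserts the conclusion for \emph{all} finite-index $L_i\subseteq K_i$, and neither hyperbolicity nor virtual specialness passes to arbitrary quotients, so the deduction from the normal case to the general case is not automatic. A small further point: your parenthetical identifies the $P_i$ with the parabolics of the toral structure, but the statement allows $\mathcal{P}$ to be any almost malnormal quasiconvex family; in the paper's application they happen to coincide, but the proof should not assume this.
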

\begin{proof}
This is a special case of \cite[Lemma 16.13]{wise_structure_2012}.
\end{proof}

We will also need to make use of a relatively hyperbolic extension of the results of Agol, Groves and Manning from \cite{agol_residual_2008}.

\begin{theorem}[Groves and Manning \cite{groves_quasiconvexity_????}]\label{thm: RHAGM}
Let $G$ be a toral relatively hyperbolic group, with
parabolic subroups $\{P_1,\ldots,P_n\}$ and let $H$ be a subgroup
which is relatively quasiconvex and relatively malnormal.   There
exist subgroups of finite index $K'_i\subseteq P_i$ (for all $i$)
such that, for all subgroups of finite index $L_i\subseteq K'_i$,
if
\[
\eta:G\to Q= G/\langle\langle L_1,\ldots,L_n\rangle\rangle
\]
is the quotient map, the quotient $Q$ is word-hyperbolic and the image $\eta(H)$ in $Q$ is quasiconvex and almost malnormal.
\end{theorem}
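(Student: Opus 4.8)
The plan is to realise $Q$ as a sufficiently deep relatively hyperbolic Dehn filling of $G$ and to control the image $\eta(H)$ through the filling, using the cusped-space geometry underlying the Groves--Manning theory of quasiconvexity under filling \cite{groves_quasiconvexity_????}. First I would record the structural fact that makes the whole argument possible: since $H$ is relatively quasiconvex in the toral relatively hyperbolic group $G$, it is itself toral relatively hyperbolic, its peripheral structure being the finitely many conjugacy classes of infinite intersections $H\cap P_i^{g}$, each of which is finitely generated abelian. This is what will allow the filling data chosen on $G$ to be transported to a compatible filling of $H$.

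For the word-hyperbolicity of $Q$, I would invoke the relatively hyperbolic Dehn filling theorem: there are finite-index subgroups $K'_i\subseteq P_i$ so that for every finite-index $L_i\trianglelefteq P_i$ with $L_i\subseteq K'_i$, the quotient $Q$ is hyperbolic relative to the images $\bar P_i=P_i/L_i$, each of which embeds in $Q$. As each $L_i$ has finite index, every $\bar P_i$ is finite, so $Q$ is hyperbolic relative to a finite family of finite subgroups and is therefore word-hyperbolic; since the toral parabolic family is almost malnormal, this is also exactly the content of Theorem \ref{thm: RHMSQT}. After possibly shrinking $K'_i$ I would arrange the filling to be compatible with $H$, meaning that $H\cap\langle\langle L_1,\ldots,L_n\rangle\rangle$ is precisely the normal closure in $H$ of the induced fillings of the peripheral subgroups $H\cap P_i^g$. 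Then $\eta(H)\cong H/(H\cap\langle\langle L_1,\ldots,L_n\rangle\rangle)$ is itself the corresponding Dehn filling of the toral relatively hyperbolic group $H$; its peripheral quotients are finite, so $\eta(H)$ is word-hyperbolic, and the Groves--Manning quasiconvexity theorem gives that $\eta(H)$ is relatively quasiconvex in $Q$. Since $Q$ is word-hyperbolic, relative quasiconvexity coincides with ordinary quasiconvexity, giving the quasiconvexity assertion.

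It remains to pass relative malnormality to almost malnormality, and this is where I expect the difficulty to lie. Relative malnormality says $H^\gamma\cap H$ is parabolic whenever $\gamma\notin H$, and the point is that a deep filling faithfully tracks the configuration of translates of the quasiconvex hull of $H$ in the cusped space away from the deep parts of the combinatorial horoballs. For $K'_i$ chosen deep enough to control the finitely many double-coset configurations relevant to the relative height of $H$, an infinite intersection $\eta(H)^{\bar\gamma}\cap\eta(H)$ with $\bar\gamma\notin\eta(H)$ would lift to an infinite intersection $H^\gamma\cap H$ that is either non-parabolic, contradicting relative malnormality, or parabolic with finite image in $Q$; hence $\eta(H)^{\bar\gamma}\cap\eta(H)$ is finite and $\eta(H)$ is almost malnormal. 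The main obstacle is precisely this uniform deep-filling control: one must simultaneously guarantee that $Q$ is hyperbolic, that the quasiconvex hull and peripheral structure of $H$ survive the filling so that $\eta(H)$ stays quasiconvex, and that the intersection pattern of conjugates is preserved well enough to upgrade relative malnormality to almost malnormality. Establishing this preservation uniformly over the finitely many relevant coset configurations, via the augmented cusped-space geometry, is the technical heart of the argument and is the substance of \cite{groves_quasiconvexity_????} together with the filling techniques of \cite{agol_residual_2008}.
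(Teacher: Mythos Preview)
The paper does not give its own proof of this theorem: it is stated as a result of Groves and Manning \cite{groves_quasiconvexity_????} and cited without proof, then used as a black box in the proof of Theorem~\ref{thm: Profinite relative malnormality}. So there is nothing in the paper to compare your proposal against.

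That said, your sketch is a plausible outline of the Groves--Manning argument. The ingredients you identify---that sufficiently deep Dehn fillings of a toral relatively hyperbolic group are word-hyperbolic (since the filled parabolics are finite), that relative quasiconvexity of $H$ persists to quasiconvexity of $\eta(H)$ in the word-hyperbolic quotient, and that relative malnormality of $H$ passes to almost malnormality of $\eta(H)$ because infinite intersections of conjugates would have to come from parabolic intersections upstairs, which are killed to finite groups by the filling---are exactly the right ones. You are also correct that the technical content lies in the uniform control of the cusped-space geometry under filling, which is what \cite{groves_quasiconvexity_????} supplies. One point to be careful about: you assert that after shrinking $K'_i$ the filling can be made ``compatible with $H$'' so that $H\cap\langle\langle L_1,\ldots,L_n\rangle\rangle$ is the normal closure in $H$ of the induced fillings. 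This is true for sufficiently deep fillings but is itself a nontrivial consequence of the Groves--Manning machinery (the ``$H$-filling'' or ``induced filling'' theorem), not something that follows formally; it should be flagged as one of the results being invoked rather than a routine arrangement.
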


We will now prove Theorem \ref{thm: Profinite relative malnormality}.

\begin{proof}[Proof of Theorem \ref{thm: Profinite relative malnormality}]
Suppose that $\hat{h}\in\wh{H}$, $\hat{\gamma}\notin \wh{H}$ and $\hat{h}^{\hat{\gamma}}\in\widehat{H}$.  Since $\hat{\gamma}\notin\wh{H}$, there exists a finite quotient $q_0:G\to Q_0$ such that, when extended to the profinite completion, $\hat{q}_0(\hat{\gamma})\notin q_0(H)$ and also $\hat{q}_0(\hat{h})\neq 1$.  Let $q:G\to Q$ be any finite quotient of $G$ such that $q_0$ factors through $q$.  For each $i$, let
\[
L_i=\ker q\cap K_i\cap K'_i
\]
for $K_i$ as in Theorem \ref{thm: RHMSQT} and $K'_i$ as in Theorem \ref{thm: RHAGM}.   Now we Dehn fill to obtain
\[
\eta:G\to G/\langle\langle L_1,\ldots, L_n\rangle\rangle=:\Delta~,
\]
and let $K=\eta(H)$.  By Theorem \ref{thm: RHMSQT}, $\Delta$ is virtually special.  Note that $q_0$ factors through $\eta$, and so $\hat{\eta}(\hat{\gamma})\notin\widehat{K}$ and $\hat{\eta}(\hat{h})\neq 1$.  By Theorem \ref{thm: RHAGM}, $K$ is quasiconvex and almost malnormal in $\Delta$.

Hence, by Corollary \ref{cor: Almost malnormal closure}, $\widehat{K}$ is also almost malnormal in $\wh{\Delta}$.  But $\hat{\eta}(\hat{h})\in \widehat{K}\cap\widehat{K}^{\hat{\eta}(\hat{\gamma})^{-1}}$, and so $\hat{\eta}(\hat{h})$ has finite order, and hence is conjugate into $\hat{\eta}(\widehat{P}_i)$ for some $i$.  In particular, $\hat{q}(\hat{h})$ is conjugate into $\hat{q}(\widehat{P}_i)$ for some $i$.

Since $q$ was an arbitrarily deep finite quotient of $G$, it follows that $\hat{h}$ is conjugate into $\widehat{P}_i$ for some $i$.  This completes the proof.
\end{proof}

In the case of cusped hyperbolic manifolds, it is also an important fact that the peripheral subgroups form a malnormal family. We will need the profinite version of this fact (Lemma \ref{lem: Peripheral profinite malnormality} below).  This result was also needed in \cite{hamilton_separability_2013}, but the result stated there \cite[Lemma 4.7]{hamilton_separability_2013} is slightly weaker, as noticed independently by Gareth Wilkes and the second author.  We therefore provide a strengthened result here, with thanks to Emily Hamilton.

\begin{lemma}[Cf.\ Lemma 4.7 of \cite{hamilton_separability_2013}]\label{lem: Peripheral profinite malnormality}
Let $\Gamma$ be the fundamental group of a cusped hyperbolic manifold and let $\{P_i\}$ be conjugacy representatives of the cusp subgroups.  Then the set of their closures $\{\overline{P}_i\}$ in the profinite completion $\wh{\Gamma}$ forms a malnormal family: that is, if $\overline{P}_i\cap\overline{P}_j^{\hat{\gamma}}$ is non-trivial for some $\hat{\gamma}\in\wh{\Gamma}$ then $i=j$ and $\hat{\gamma}\in\overline{P}_i$.
\end{lemma}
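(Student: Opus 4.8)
Throughout, write $\Gamma=\pi_1 M$ for the cusped hyperbolic manifold $M$, so that $\Gamma$ is torsion-free, toral relatively hyperbolic with the cusp subgroups $\{P_i\}$ as parabolics, and virtually compact special (by the work of Wise in the $3$-manifold case). The classical starting point is that $\{P_i\}$ is a genuinely \emph{malnormal} family in $\Gamma$: the $P_i$ are maximal parabolic subgroups, distinct cusps give non-conjugate maximal parabolics, and torsion-freeness upgrades ``almost malnormal'' to ``malnormal''. The plan is to transfer this malnormality to $\wh{\Gamma}$ by the same Dehn-filling strategy used to prove Theorem \ref{thm: Profinite relative malnormality}, reducing to a word-hyperbolic virtually special quotient where conjugacy separability is available. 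Note that the almost-malnormal transfer of Corollary \ref{cor: Almost malnormal closure} is useless here, since after filling the peripheral images are \emph{finite} and almost malnormality of finite subgroups is vacuous; this is precisely why a separate argument is needed.

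Suppose $\hat{h}\in\overline{P}_i\cap\overline{P}_j^{\hat{\gamma}}$ is non-trivial, and write $\hat{h}=\hat{k}^{\hat{\gamma}}$ with $\hat{k}\in\overline{P}_j$. Since $\hat{h}\neq 1$, choose a finite quotient $q_0:\Gamma\to Q_0$ with $\hat{q}_0(\hat{h})\neq 1$; if moreover we wish to rule out $\hat{\gamma}\notin\overline{P}_i$, we also arrange $\hat{q}_0(\hat{\gamma})\notin q_0(P_i)$. For an arbitrarily deep finite quotient $q:\Gamma\to Q$ through which $q_0$ factors, set $L_k=\ker q\cap K_k\cap K'_k$, with $K_k,K'_k$ supplied by Theorems \ref{thm: RHMSQT} and \ref{thm: RHAGM}, and Dehn fill to obtain $\eta:\Gamma\to\Delta=\Gamma/\langle\langle L_1,\ldots,L_n\rangle\rangle$. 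By Theorem \ref{thm: RHMSQT}, $\Delta$ is word-hyperbolic and virtually special; by the combinatorial Dehn filling of Agol--Groves--Manning (Theorem \ref{thm: RHAGM} applied to the peripheral structure) a sufficiently deep filling introduces no new intersections among peripheral cosets, so the finite images $\bar{P}_k:=\eta(P_k)$ form a \emph{malnormal} family of finite subgroups of $\Delta$. Since $q_0$ factors through $\eta$, we have $\hat{\eta}(\hat{h})=:z\in\bar{P}_i$ non-trivial, and $z=w^{\hat{\delta}}$ where $w:=\hat{\eta}(\hat{k})\in\bar{P}_j$ and $\hat{\delta}:=\hat{\eta}(\hat{\gamma})$.

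Now $z$ and $w$ are honest non-trivial elements of $\Delta$ (as $\bar P_i,\bar P_j$ are finite) that are conjugate in $\wh{\Delta}$. Since hyperbolic virtually special groups are conjugacy separable (Minasyan--Zalesskii), $z$ and $w$ are conjugate in $\Delta$, say $z=w^{\delta_1}$ with $\delta_1\in\Delta$. Then $z\in\bar{P}_i\cap\bar{P}_j^{\delta_1}$ is non-trivial, so malnormality of $\{\bar{P}_k\}$ in $\Delta$ forces $i=j$ and $\delta_1\in\bar{P}_i$; this already yields $i=j$ for the original indices. For the final clause, observe $\hat{\delta}\delta_1^{-1}\in C_{\wh{\Delta}}(w)$. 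Using that $\Delta$ is hereditarily conjugacy separable, so that $C_{\wh{\Delta}}(w)=\overline{C_{\Delta}(w)}$, together with the fact that malnormality gives $C_{\Delta}(w)\subseteq\bar{P}_i$ (if $\delta\in C_\Delta(w)$ then $w\in\bar P_i\cap\bar P_i^{\delta}$, so $\delta\in\bar P_i$), we conclude $\hat{\delta}\in\bar{P}_i$, hence $\hat{q}(\hat{\gamma})\in q(P_i)$. As $q$ was arbitrarily deep, $\hat{\gamma}\in\overline{P}_i$, contradicting the choice of $q_0$ in the case $\hat{\gamma}\notin\overline{P}_i$. This proves both $i=j$ and $\hat{\gamma}\in\overline{P}_i$.

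The main obstacle is the interface between the filled group $\Delta$ and its completion. Two points carry the weight: first, that a sufficiently deep filling yields an honestly malnormal (not merely almost malnormal) family of finite peripheral images, which must be extracted from the combinatorial Dehn filling package rather than from Corollary \ref{cor: Almost malnormal closure}; and second, the centralizer control $C_{\wh{\Delta}}(w)=\overline{C_{\Delta}(w)}$, which is where hereditary conjugacy separability of hyperbolic virtually special groups is essential and without which one only recovers $\hat{\delta}\in C_{\wh{\Delta}}(w)\,\bar{P}_i$. Everything else is the bookkeeping of pushing $\hat h,\hat k,\hat\gamma$ through the filling and passing to the limit over deep quotients.
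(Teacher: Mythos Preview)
Your argument is correct, but it follows a genuinely different route from the paper's.  The paper does \emph{not} reduce to a word-hyperbolic virtually special quotient via combinatorial Dehn filling.  Instead, it builds explicit congruence-type quotients using Thurston's \emph{geometric} hyperbolic Dehn surgery: fixing non-conjugate cusps $P,Q$, one surgers along $Q$-independent slopes to obtain closed hyperbolic manifolds $M_1,M_2$ in which a basis $p_1,p_2$ of $P$ becomes loxodromic (one at a time) while every element of $Q$ stays parabolic; then one reduces the resulting arithmetic representations modulo primes to land in $\mathrm{PSL}(2,F)$ for finite fields $F$, where a trace dichotomy (trace $\neq\pm 2$ for images of $P$, trace $=\pm 2$ for images of $Q$) shows no non-trivial image of $P$ can be conjugate into the image of $Q$.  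The self-intersection case $\overline P\cap\overline P^{\hat\gamma}$ is then reduced to the two-cusp case by passing to a finite-index subgroup separating $\hat\gamma$ from $\overline P$.  This is entirely $3$-manifold-specific and uses none of the virtual specialness or conjugacy-separability machinery.

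What each approach buys: your argument is more conceptual and would apply verbatim to any torsion-free, virtually compact special, toral relatively hyperbolic group---it fits naturally with the rest of the paper's toolkit and avoids any appeal to hyperbolic geometry or arithmetic.  The paper's argument, by contrast, is far more elementary in its inputs (Thurston Dehn surgery and reduction of rings of integers modulo primes, rather than the Malnormal Special Quotient Theorem and hereditary conjugacy separability), and produces \emph{explicit} finite quotients witnessing the separation.  You are also right to flag the two genuine load-bearing points in your route: that sufficiently deep fillings make $\{\eta(P_k)\}$ an honestly malnormal (not merely almost malnormal) family of finite subgroups of $\Delta$---this is true but must be extracted from the Groves--Manning/Osin/Dahmani--Guirardel--Osin package, since Theorem~\ref{thm: RHAGM} as stated gives only almost malnormality, which is vacuous for finite subgroups---and that hereditary conjugacy separability of $\Delta$ (via Minasyan) is needed to pin down $\hat\delta$, not merely conjugacy separability.
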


Note that it is enough to consider the case of two cusps, so we adopt the notation $P=P_1$ and $Q=P_2$.  We first consider the intersection $\overline{P}\cap\overline{Q}^{\hat{\gamma}}$. The next lemma is  a slight modification of \cite[Lemma 4.6]{hamilton_separability_2013}.

\begin{lemma}\label{lem: Congruence quotients}
Let $M = {\Bbb H}^3 / \Gamma$ be a cusped hyperbolic $3$-manifold of finite volume.  
Let $P$ and $Q$ be non-conjugate cusp subgroups of $\Gamma$.  
Then there exists a positive integer $n$ with the following property.  For each integer $m \geq n$, there exist finite fields $F_1$ and $F_2$
and group homomorphisms $f_1: \Gamma \rightarrow \mathrm{PSL}(2, F_1)$ and $f_2: \Gamma \rightarrow \mathrm{PSL}(2, F_2)$ such that:

\begin{enumerate}
\item the image of $P$ under $f_1 \times f_2$ is isomorphic to $\mathbb{Z} / m \mathbb{Z} \times \mathbb{Z} / m \mathbb{Z};$
\item for every element $p\in P$ and every $i$, if $f_i(p)$ is non-trivial, then $\tr f_i(p)\neq \pm2$;
\item for every element $q\in Q$ and every $i$, $\tr f_i(q)=\pm2$.
\end{enumerate}
In particular, if every $f_i(p)$ is conjugate into $f_i(Q)$, 
then $(f_1 \times f_2)(p)$ is trivial.
\end{lemma}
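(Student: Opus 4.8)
The plan is to produce $f_1$ and $f_2$ by reducing, modulo carefully chosen primes, the holonomies of suitable hyperbolic Dehn fillings of the $P$-cusp. First I would record the obstruction that forces this route. If $f$ were obtained by reducing the discrete faithful representation $\Gamma\hookrightarrow\mathrm{PSL}(2,\C)$ (or any Galois conjugate of it) modulo a prime, then since trace is preserved every parabolic element of $\Gamma$ would have trace $\equiv\pm2$ in the residue field, and so the images $f_i(p)$ with $p\in P$ could never satisfy condition (2) unless they were already trivial. Hence I must work with representations in which the cusp $P$ is no longer parabolic, and these are supplied by Thurston's hyperbolic Dehn surgery: by filling $P$ while leaving $Q$ complete, I can make $P$ elliptic and keep $Q$ parabolic.

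Concretely, write $P=\langle a,b\rangle\cong\Z^2$ and let $n$ be large enough that Thurston's theorem applies, i.e.\ that cone angle $2\pi/m\le 2\pi/n$ is permissible. For each $m\ge n$ I would let $\rho^a_m:\Gamma\to\mathrm{PSL}(2,\C)$ be the holonomy of the complete, finite-volume hyperbolic $3$-orbifold obtained by $(2\pi/m)$-cone filling the $P$-cusp along the slope $a$, while leaving the $Q$-cusp (and every other cusp) complete. Then $\rho^a_m(a)$ is elliptic of order $m$, $\rho^a_m(b)$ is the loxodromic holonomy of the core geodesic, and $\rho^a_m(Q)$ remains parabolic. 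By Mostow--Prasad rigidity this representation is defined over a number field $K$, and after conjugation $\rho^a_m(\Gamma)\subseteq\mathrm{PSL}(2,\mathcal{O}_K[1/N])$. Since $\rho^a_m(a)$ and $\rho^a_m(b)$ commute and are semisimple, I may simultaneously diagonalize them, writing $\rho^a_m(a)=\mathrm{diag}(\zeta,\zeta^{-1})$ (with $\rho^a_m(a)$ of order $m$) and $\rho^a_m(b)=\mathrm{diag}(\lambda,\lambda^{-1})$ with $\lambda$ not a root of unity. I would then pick a prime $\mathfrak{p}$ of good reduction whose residue characteristic is coprime to $m$ and which divides $\lambda-1$ (such $\mathfrak{p}$ exist because $\lambda-1\neq 0$); reduction modulo $\mathfrak{p}$ gives $f_1:\Gamma\to\mathrm{PSL}(2,F_1)$ with $f_1(a)$ of order exactly $m$ (the order is preserved since $\mathfrak p\nmid m$), $f_1(b)=1$ (since $\bar\lambda=1$), and $f_1(Q)$ parabolic. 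Swapping the roles of $a$ and $b$ produces $f_2$ with $f_2(b)$ of order $m$, $f_2(a)=1$, and $f_2(Q)$ parabolic.

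The verification is then routine. The elements $(f_1\times f_2)(a)$ and $(f_1\times f_2)(b)$ each have order $m$ and generate $\langle f_1(a)\rangle\times\langle f_2(b)\rangle\cong\Z/m\Z\times\Z/m\Z$, giving (1). Because $f_1(b)=1$, every $f_1(p)$ with $p\in P$ is a power of the regular semisimple element $f_1(a)$, hence is either trivial or has trace $\neq\pm2$ (in $\mathrm{PSL}(2,F)$ a nontrivial element has trace $\pm2$ if and only if it is unipotent), and symmetrically for $f_2$; this is (2). Condition (3) is immediate, since $\rho^a_m(Q)$ and $\rho^b_m(Q)$ are parabolic and reduction preserves trace $\pm2$. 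The final assertion also follows: if each $f_i(p)$ is conjugate into $f_i(Q)$ then $\tr f_i(p)=\pm2$, so (2) forces $f_i(p)=1$ for both $i$, whence $(f_1\times f_2)(p)=1$.

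The main obstacle is the geometric input of the second paragraph: justifying that these \emph{partial} cone-orbifold fillings are hyperbolic for all $m\ge n$ with a single uniform threshold $n$, and that their holonomies are defined over number fields with exactly the stated trace behaviour at the two cusps. This rests on Thurston's hyperbolic Dehn surgery theorem in its cone-manifold/orbifold form (leaving the remaining cusps unfilled) together with Mostow--Prasad rigidity; the cone-deformation results of Hodgson--Kerckhoff supply the uniform control. A secondary delicate point is the prime selection, where one must simultaneously arrange $\mathfrak{p}\nmid m$ (so that $f_1(a)$ has order exactly $m$) and $\mathfrak{p}\mid(\lambda-1)$ (so that $f_1(b)=1$ rather than a nontrivial parabolic). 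Since $\lambda$ is a fixed algebraic number that is not a root of unity, only finitely many primes are excluded while infinitely many divide $\mathrm{Nm}(\lambda-1)$, so an admissible choice always exists.
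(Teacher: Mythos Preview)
Your overall strategy---fill the $P$-cusp while leaving $Q$ complete, so that $P$ becomes non-parabolic while $Q$ stays parabolic, then reduce to a finite field---is exactly the right idea, and the verification of (1)--(3) from your setup is fine. But the prime-selection step contains a genuine error that breaks the argument.

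You need a prime $\mathfrak p$ of good reduction with $\mathfrak p\nmid m$ and $\mathfrak p\mid(\lambda-1)$, and you assert that ``infinitely many [primes] divide $\mathrm{Nm}(\lambda-1)$''. This is false: $\lambda-1$ is a fixed nonzero algebraic number, so only \emph{finitely} many prime ideals divide it (equivalently, $\mathrm{Nm}(\lambda-1)$ is a fixed nonzero rational integer). There is no reason those finitely many primes should avoid both $m$ and the primes of bad reduction; indeed, since $\lambda=\lambda_m$ varies with $m$ and $\lambda_m\to 1$ as $m\to\infty$, you have no control whatsoever over the arithmetic of $\lambda_m-1$. So the construction of $f_1$ (and symmetrically $f_2$) is not justified.

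The paper's proof avoids this by reversing the roles of ``fill'' and ``reduce''. Rather than performing a different orbifold filling for each $m$ and then trying to kill the loxodromic generator $b$ with a prime, it performs a \emph{single} genuine Dehn filling along the slope $p_2$, which already kills $p_2$ in $\pi_1$ and makes $p_1$ loxodromic with eigenvalue $\omega$, $|\omega|\neq 1$. Now $\omega$ is fixed once and for all, and the flexibility in $m$ comes entirely from the choice of residue field: a number-theoretic lemma (Corollary~2.5 of \cite{hamilton_finite_2005}) guarantees that for every sufficiently large $m$ there is a ring homomorphism to a finite field in which the image of $\omega$ has multiplicative order exactly $2m$. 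This is where the uniform threshold $n$ comes from. Your approach tries to get the order $m$ from the geometry and then fix up the arithmetic; the paper gets triviality of one generator from the geometry and then extracts the order $m$ from the arithmetic, which is the direction in which one actually has infinitely many primes to work with.
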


\begin{proof}Since $P$ and $Q$ are non-conjugate, they correspond to distinct cusps of $M$.  Let $T$ be the cusp of $M$ corresponding to $P$.  
The group $P$ is free abelian of rank $2$.
By Thurston's hyperbolic Dehn surgery theorem, there exist a basis $\{ p_1, p_2 \}$ of $P$ and complete hyperbolic $3$-manifolds $M_1$ and $M_2$ of finite volume, 
obtained by Dehn surgery on $M$ along $T$, such that 
if $$\phi_1: \Gamma
\rightarrow \pi_1(M_1) \ \mathrm{and} \ \phi_2: \Gamma \rightarrow \pi_1(M_2)$$ are the homomorphisms induced by inclusion, then $\phi_1(p_1)$ is a
loxodromic isometry of $\pi_1(M_1)$, $\phi_1(p_2)$ is trivial, 
$\phi_2(p_1)$ is trivial and $\phi_2(p_2)$ is a loxodromic isometry of
$\pi_1(M_2)$. 

Since $M_1$ has finite volume and $\phi_1(p_1)$ is loxodromic, there exists
a discrete, faithful representation $$\rho_1: \pi_1(M_1) \rightarrow 
\mathrm{PSL}(2,\mathbb{C})$$ such that $\rho_1(\pi_1(M_1)) \subset \mathrm{PSL}(2,L_1)$, for some number field $L_1$,
and $$\rho_1(\phi_1(p_1)) =  
\pm \begin{pmatrix}
\omega & 0 \\
0 & \omega^{-1} \\ \end{pmatrix}, \  \vert \omega \vert \neq 1. $$   
Let $R_1$ be the ring in $L_1$ generated by
the coefficients
of the generators of $\rho_1(\pi_1(M_1))$ over $\Bbb Z$.
Then $\rho_1(\pi_1(M_1)) \subset \mathrm{PSL}(2, R_1)
\subset \mathrm{PSL}(2, L_1)$.  By Corollary 2.5 of \cite{hamilton_finite_2005}, there exists a positive integer $n$ with the following property.  For each integer $m \geq n$, there exist a finite field $F_1$ and a ring
homomorphism $\eta_1: R_1 \rightarrow F_1$ such that the multiplicative order of $\eta_1(\omega)$ is equal to $2m$.  This ring homomorphism
induces a group homomorphism 
$$\psi_1:  \rho_1(\pi_1(M_1)) \hookrightarrow \mathrm{PSL}(2, R_1) \rightarrow
\mathrm{PSL}(2, F_1).$$  
Let $$f_1: \Gamma \rightarrow \mathrm{PSL}(2,F_1)$$ denote the composition
$\psi_1 \circ  \rho_1 \circ \phi_1$.  Then $f_1(p_1)$ has order $m$ and 
$f_1(p_2)$ is trivial.  For every element $q \in Q$, $\rho_1\circ\phi_1(q)$ is 
parabolic.  Therefore, $\tr f_1(q)=\pm2$.  For every element $p \in P$,
$$\rho_1(\phi_1(p)) =  
\pm \begin{pmatrix}
\omega^k & 0 \\
0 & \omega^{-k} \\ \end{pmatrix}, \ \mathrm{for \ some} \ k \in \mathbb{Z}. $$ If $\eta_1(\omega^k + \omega^{-k}) = 2$, then 
$\eta_1(\omega^k) = \eta_1(\omega^{-k}) = 1$.  If 
$\eta_1(\omega^k + \omega^{-k}) =  -2$, then 
$\eta_1(\omega^k) = \eta_1(\omega^{-k}) =  -1$.  Therefore, if
$f_1(p)$ is non-trivial, then $\tr f_1(p)\neq \pm2$. 

In a similar way, we can choose $n$ such that for each integer $m \geq n$ 
there exist a finite field $F_2$
and a group homomorphism $$f_2: \Gamma \rightarrow \mathrm{PSL}(2, F_2)$$ such that
$f_2(p_1)$ is trivial, $f_2(p_2)$ has order $m$, for every $q \in Q$,
$\tr f_2(q) = \pm2$, and, for every  $p \in P$, if $f_2(p)$ is non-trivial, then $\tr f_2(p) \neq \pm2$. The homomorphisms $f_1$ and $f_2$
then satisfy the three conditions above.
\end{proof}

We can now easily prove that the peripheral subgroups form a profinite malnormal family.

\begin{proof}[Proof of Lemma \ref{lem: Peripheral profinite malnormality}]
Consider first the intersection $\overline{P}\cap\overline{Q}^{\hat{\gamma}}$.  Let $\hat{p}$ be an element of the intersection $\overline{P}\cap\overline{Q}^{\hat{\gamma}}$.  Let $f$ be an arbitrary homomorphism from $\Gamma$ to a finite group, and let $f_0$ be the restriction of $f$ to $P$.  Choose $f_1,f_2$ as in Lemma \ref{lem: Congruence quotients} so that $f_0$ factors through $(f_1\times f_2)|_P$, and extend them by continuity to homomorphisms $\hat{f}_i$ from the profinite completion.  Choose $p\in P$ so that $f_i(p)=\hat{f}_i(\hat{p})$ for all $i$.  Since $\hat{p}$ is conjugate into $\overline{Q}$, item 3 implies that $\tr f_i(p)=\tr \hat{f}_i(\hat{p})=2$ for every $i$. Item 2 then implies that $f_i(p)=1$ for every $i$, and so by item 1, $\hat{f}_0(\hat{p})=f_0(p)=1$.  Thus, every finite quotient of $\Gamma$ kills $\hat{p}$, and so $\hat{p}=1$ by the definition of the profinite completion.  We have shown that the intersection $\overline{P}\cap\overline{Q}^{\hat{\gamma}}$ is trivial.

We now consider the intersection $\overline{P}\cap\overline{P}^{\hat{\gamma}}$, and suppose that $\hat{\gamma}\notin\overline{P}$. Since $\overline{P}$ is closed, there is a subgroup $\Gamma_0$ of finite index in $\Gamma$ that contains $P$ but such that $\wh{\Gamma}_0\subseteq\wh{\Gamma}$ does not contain $\hat{\gamma}$.  Let $\hat{\gamma}=\gamma\hat{\gamma}_0$, where $\gamma\in\Gamma\smallsetminus\Gamma_0$ and $\hat{\gamma}_0\in\wh{\Gamma}_0$.  Then $\Gamma_0$ is the fundamental group of a closed hyperbolic manifold with non-conjugate cusp subgroups $P$ and $Q=P^\gamma\cap\Gamma_0$ (since $P$ is malnormal in $\Gamma$).  By the argument of the previous paragraph applied to $\Gamma_0$,  the intersection $\overline{P}\cap\overline{Q}^{\hat{\gamma}_0}$ is trivial.  Since $Q$ is of finite index in $P^\gamma$, it follows that the intersection $\overline{P}\cap\overline{P}^{\hat{\gamma}}$ is finite.  But 3-manifold groups are torsion-free and good, whence their profinite completions are also torsion-free.  In particular, $\overline{P}\cap\overline{P}^{\hat{\gamma}}$ is trivial.
\end{proof}

\section{Profinite trees}\label{sec: Profinite trees}

In order to prove our main theorems, our strategy is to promote the action of a group on a tree to an action of its profinite completion on a \emph{profinite tree}. In this section, we recall the necessary elements of the theory of profinite trees.

A graph $\Gamma$ is a disjoint  union $E(\Gamma) \cup V(\Gamma)$
with two maps $d_0, d_1 : \Gamma \to V(\Gamma)$ that are the
identity on the set of vertices $V(\Gamma)$.  For an element $e$ of
the set of edges  $E(\Gamma)$, $d_0(e) $ is called the initial and
$d_1(e) $ the terminal vertex of $e$.

\begin{definition}
A \emph{profinite graph} $\Gamma$ is a graph such that:
\begin{enumerate}
\item $\Gamma$ is a profinite space (i.e.\ an inverse limit of finite
discrete spaces);
\item $V(\Gamma)$ is closed; and
\item the maps $d_0$ and $d_1$
are continuous.
\end{enumerate}
Note that $E(\Gamma)$ is not necessary closed.
\end{definition}

By \cite[Prop.~1.7]{zalesskii_subgroups_1988} every profinite
graph $\Gamma$ is an inverse limit of finite quotient graphs of
$\Gamma$.

For a profinite space $X$  that is the inverse limit of finite
discrete spaces $X_j$, $[[\widehat{\mathbb{Z}} X]]$ is the inverse
limit  of $ [\widehat{\mathbb{Z}} X_j]$, where
$[\widehat{\mathbb{Z}} X_j]$ is the free
$\widehat{\mathbb{Z}}$-module with basis $X_j$. For a pointed
profinite space $(X, *)$ that is the inverse limit of pointed
finite discrete spaces $(X_j, *)$, $[[\widehat{\mathbb{Z}} (X,
*)]]$ is the inverse limit  of $ [\widehat{\mathbb{Z}} (X_j, *)]$,
where $[\widehat{\mathbb{Z}} (X_j, *)]$ is the
$\widehat{\mathbb{Z}}$-vector space with basis $X_j \setminus \{ *
\}$ \cite[Chapter~5.2]{ribes_profinite_2010}.

For a profinite graph $\Gamma$ define the pointed space
$(E^*(\Gamma), *)$ as  $\Gamma / V(\Gamma)$ with the image of
$V(\Gamma)$ as a distinguished point $*$, and denote the image of $e\in E(\Gamma)$ by $\bar{e}$.  By definition  a profinite
tree  $\Gamma$ is a profinite graph with a short exact sequence
$$
0 \to [[\widehat{\mathbb{Z}}((E^*(\Gamma), *)]]
\stackrel{\delta}{\rightarrow} [[\widehat{\mathbb{Z}} V(\Gamma)]]
\stackrel{\epsilon}{\rightarrow} \widehat{\mathbb{Z}} \to 0
$$
where $\delta(\bar{e}) = d_1(e) - d_0(e)$ for every $e \in E(\Gamma)$ and $\epsilon(v) = 1$ for every $v \in V(\Gamma)$.  If $v$  and $w$ are elements  of a profinite tree   $T$, we denote by $[v,w]$ the smallest profinite subtree of $T$ containing $v$ and $w$ and call it geodesic.

By definition a profinite group $G$ acts on a profinite graph
$\Gamma$ if  we have a continuous action of $G$ on the profinite
space $\Gamma$ that commutes with the maps $d_0$ and $d_1$.

If a profinite group $G$ acts on a profinite tree $T$ then by
\cite[Lemma 1.5]{zalesskii_profinite_1990} there exists a minimal
$G$-invariant subtree $D$ of $T$ (note that in the classical
Bass--Serre theory there is another possibility, namely the group
can have an invariant end; this possibility is missing in the
profinite case because of the compactness of $T$); moreover $D$ is
unique if it is not a vertex. If $D$ is finite then it is a vertex
and so $G$ stabilizes a vertex. Therefore, if $G$ does not
stabilize a vertex, $D$ is infinite.

We state now the general Tits-alternative-type result from
\cite{zalesskii_profinite_1990} that is the key result for our use.

\begin{theorem}\label{profinite} (\cite{zalesskii_profinite_1990}) Let $H$ be a profinite group acting  on a profinite tree $T$.
Suppose $H$ does not possess a non-abelian free pro-$p$ subgroup
for every prime $p$. Then either $H$ stabilizes a vertex  or there
exists a unique infinite minimal $H$-invariant subtree $D$ of $T$
such that the quotient group $L=H/K$ modulo the kernel of the
action on $D$ is soluble and isomorphic to one of the following
groups:
\begin{enumerate}
\item $L\cong\Z_\pi\rtimes\Z_\rho$ where $\pi$ and $\rho$ are
disjoint sets of primes;
\item $L$ is a profinite dihedral
group $\Z_\pi\rtimes C_2$;
\item $L$ is a profinite Frobenius
group $\Z_\pi\rtimes C_n$, i.e.\ the order of every prime divisor
$p$ of $n$ divides $q-1$ for some $q\in\pi$ and the centralizers
of nonidentity elements of $C_n$ coincide with $C_n$.
\end{enumerate}
\end{theorem}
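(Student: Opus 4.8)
The plan is to mimic the classical Bass--Serre proof of the Tits alternative for groups acting on trees, replacing ping-pong on a simplicial tree by a profinite ping-pong argument and replacing free subgroups by free pro-$p$ subgroups. First I would dispose of the elliptic case: if $H$ fixes a vertex we are in the first alternative. Otherwise, by \cite[Lemma 1.5]{zalesskii_profinite_1990} there is a unique infinite minimal $H$-invariant subtree $D$, and after passing to $L=H/K$, where $K$ is the kernel of the action on $D$, it suffices to analyse a profinite group $L$ acting faithfully and minimally on the infinite profinite tree $D$ and to show that $L$ is soluble of one of the three stated forms. Crucially, free pro-$p$ subgroups will be detected directly inside $H$ via its action, so the hypothesis need not be transported to the quotient.

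The heart of the argument is a dichotomy driven by ping-pong. Each element of $H$ either stabilises a vertex of $D$ (is \emph{elliptic}) or translates along an \emph{axis}, where both axis and translation are detected through the defining homology sequence $0 \to [[\widehat{\Z}(E^*(D),*)]] \to [[\widehat{\Z} V(D)]] \to \widehat{\Z} \to 0$ rather than through an integer-valued length function. The key technical input is a profinite ping-pong lemma: if $H$ contains two hyperbolic elements whose axes are sufficiently separated, then, working inside a Sylow pro-$p$ subgroup for a suitable prime $p$, one can exhibit a non-abelian free pro-$p$ subgroup of $H$, contradicting the hypothesis. Consequently all hyperbolic elements must share a common axis, so that $D$ contains an $H$-invariant \emph{profinite line} $\Lambda$, on which $L$ acts faithfully by minimality.

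It then remains to classify the faithful actions of a profinite group $L$ with no free pro-$p$ subgroup on a profinite line $\Lambda$. The orientation-preserving translations form a closed abelian normal subgroup $A \cong \Z_\pi$, where $\pi$ is the (possibly empty) set of primes occurring in the translation character, and the quotient $L/A$ is procyclic---again because a non-abelian free pro-$p$ subgroup is excluded---acting on $A$. The three possibilities for how this procyclic quotient acts produce exactly the three listed groups: an infinite pro-$\rho$ cyclic group acting, with $\rho$ disjoint from $\pi$, gives $\Z_\pi \rtimes \Z_\rho$; an involution acting by inversion gives the profinite dihedral group $\Z_\pi \rtimes C_2$; and a finite cyclic group $C_n$ acting fixed-point-freely gives the profinite Frobenius group $\Z_\pi \rtimes C_n$. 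The arithmetic Frobenius condition---that the order of each prime divisor of $n$ divides $q-1$ for some $q \in \pi$---is precisely what is forced by requiring $C_n$ to act without nontrivial fixed points on each $\Z_q$, i.e.\ to embed into $\Z_q^\times$. In each case $L$ is manifestly soluble.

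The main obstacle is the profinite ping-pong lemma and its consequence that independent hyperbolic elements generate a non-abelian free pro-$p$ subgroup; this is where the profinite setting diverges most sharply from the classical one, since there is no honest $\Z$-valued length function and one must instead argue with homological geodesics and within a single pro-$p$ Sylow subgroup in which to play ping-pong. A secondary difficulty is the final classification on the line, in particular extracting the Frobenius arithmetic condition and proving that no further soluble quotients arise; here one must also handle torsion (finite-order elliptic elements) and the compactness subtleties peculiar to minimal invariant subtrees.
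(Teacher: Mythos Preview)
The paper does not prove this theorem; it is quoted verbatim from \cite{zalesskii_profinite_1990} and used as a black box throughout. So there is nothing to compare your argument against here---the paper's ``proof'' is a citation.

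That said, a brief comment on your sketch is in order. The overall shape---reduce to a faithful minimal action, force all hyperbolic elements to share a common invariant line via a ping-pong obstruction, then classify actions on that line---is indeed the strategy of \cite{zalesskii_profinite_1990}. But you are underselling the real difficulties. The elliptic/hyperbolic dichotomy and the notion of an ``axis'' do not carry over to profinite trees in the naive way your sketch suggests: a profinite tree need not look anything like a simplicial tree locally, geodesics $[v,w]$ are profinite subtrees rather than finite paths, and an individual element need not have a well-defined translation axis in the classical sense. Zalesskii's argument does not proceed by finding two hyperbolic elements with separated axes and playing ping-pong with them directly; rather, it works through the inverse system of finite quotient graphs and detects free pro-$p$ subgroups via the structure of these quotients. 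Your phrase ``working inside a Sylow pro-$p$ subgroup for a suitable prime $p$'' glosses over the most delicate point: one must identify \emph{which} prime $p$ and \emph{which} elements will generate a free pro-$p$ group, and this requires control that the bare homology sequence does not obviously provide. Likewise, the ``profinite line'' $\Lambda$ you posit is not a $\widehat{\Z}$-line in any simple sense, and the classification of its automorphism group requires more care than the three-case split you describe. If you intend to reconstruct the proof, you should consult \cite{zalesskii_profinite_1990} directly rather than extrapolate from the discrete case.
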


Note that the group of automorphisms $Aut(\Z_\pi)$ coincides with
the group of units of the ring $\Z_\pi=\prod_{p\in \pi} \Z_p$ and
so is the direct product $\prod_{p\in \pi} \Z_p^*$ of groups of
units of $\Z_p$. Remark also that $\Z_p^*\cong \Z_p\times C_{p-1}$
for $p\neq 2$ and $\Z_2^*\cong \Z_2\times C_2$.

\medskip

\begin{remark}\label{cases} If in the notation of Theorem \ref{profinite}
$H$ acts $k$-acylindrically for some natural $k$ (see Definition \ref{def: Profinite acylindrical} below) then $K=1$. If in
addition $H$ is torsion free we have just the first case of a
projective soluble group $H\cong\Z_\pi\rtimes\Z_\rho$, where $\pi$
and $\rho$ are disjoint sets of primes. In this case any torsion-free profinite group containing $H$ as an open subgroup has a similar structure.\end{remark}

\section{Graphs of groups and their profinite analogues}\label{sec: Profinite gogs}

In this section, we describe the basic theory of profinite graphs
of groups in the case when underlying graph is finite (see Section
3  \cite{zalesskii_subgroups_1988}). We shall need only this case
here, in the general case the definitions and terminology are much
more involved (see \cite{zalesskii_fundamental_1989}).

Let $\Delta$ be a connected finite graph.  A \emph{graph of groups}
$(\G,\Delta)$ over $\Delta$ assigns a group $\G(m)$
to each $m\in \Delta$, and monomorphisms $\partial_i:
\G(e)\longrightarrow \G(d_i(e))$ for each edge $e\in E(\Delta)$.
If each  $\G(m)$ is a profinite group and the monomorphisms
$\partial_i$ are continuous, we say that $(\G,\Delta)$ is a \emph{graph
of profinite groups}.

The \emph{abstract fundamental group}
\[
\Pi^{abs}= \Pi_1^{abs}(\G,\Delta)
\]
of the graph of groups $(\G,\Delta)$ can be defined by means of a universal property. Fix $T$ a maximal subtree of $\Delta$.  Then $\Pi^{abs}$ is an abstract group equipped with a collection of homomorphisms
$$
\nu_m: \G(m)\longrightarrow \Pi^{abs}\quad (m\in \Delta),
$$
and a  map $E(\Delta) \longrightarrow  \Pi^{abs}$, denoted $e\mapsto t_e$  ($e\in E(\Delta)$), such that
$t_e=1$ if $e\in E(T)$, and
$$(\nu_{d_0 (e)}\partial_0)(x)= t_e(\nu_{d_1 (e)}\partial_1)(x)t_e^{-1},\quad  \forall x\in \G(e), \ e\in E(\Delta)~.
$$

The defining universal property of $\Pi^{abs}$ is then as follows.

\medskip
\noindent {\it Whenever} one has the following data:
\begin{itemize}
\item $H$ an abstract group,
\item $\beta_m: \G(m)\longrightarrow H\quad (m\in \Delta)$ a
collection of homomorphisms, and
\item a map $e\mapsto s_e\in H$ ($e\in E(\Delta)$)  with $s_e=1$, if $e\in E(T)$,
satisfying
\[
(\beta_{d_0 (e)}\partial_0)(x)= s_e(\beta_{d_1
(e)}\partial_1)(x)s_e^{-1}, \forall x\in \G(e), \ e\in E(\Delta),
\]
\end{itemize}

\noindent {\it then}  there exists a unique homomorphism $\delta :
\Pi^{abs}\longrightarrow  H$ such that $\delta(t_e)= s_e$  $(e\in
E(\Delta))$, and for each $m\in \Delta$ the diagram

\medskip
$$\xymatrix{&
\Pi ^{abs}  \ar[dd]^\delta   \\  \G(m)  \ar[ru]^{\nu_m}
\ar[rd]_{\beta_m }\\ &H }$$

\medskip
\noindent commutes.

\medskip
In \cite[Chapter I, Definition  7.3  and Corollary
7.5]{dicks_groups_1980}, and in \cite[Part I,  Sections 5.1 and
5.2]{serre_arbres_1977} the fundamental group $\Pi^{abs}$ is
defined explicitly in terms of generators and relations; there it
is also proved that the definition given above is independent of
the choice of the maximal subtree $T$, and furthermore it is
proved that the homomorphisms $\nu_m: \G(m)\longrightarrow
\Pi^{abs}$ are injective for every $m\in \Delta$. We use the
notation $\Pi^{abs}(m) = {\rm Im}(\nu_m)$; so $\Pi^{abs}(m)\cong
\G(m)$, for $m\in \Delta$.

The definition of the  \emph{profinite fundamental group}
$$\Pi = \Pi_1 (\G,\Delta)$$
of a graph $(\G,\Delta)$ of profinite groups over a finite graph
$\Delta$ is formally as above: one simply  assumes that all the
conditions take place in the category of profinite groups, i.e.\ all groups involved are profinite  and all homomorphisms are assumed to be continuous.
 The explicit  construction of $\Pi $ is as follows (see (3.3) in
 \cite{zalesskii_subgroups_1988}). Consider the family
 \[
 {\cal N}=\{N\triangleleft_f \Pi_1^{abs}(\G,\Delta)\mid N\cap
 G(v)\triangleleft_o \G(v)\}
 \]
 of all normal subgroups of finite index whose intersection with vertex groups
 are open. The profinite fundamental group $\Pi_1 (\G,\Delta)$ is
 just
 $$
 \Pi_1(\G,\Delta)=\lim\limits_{\displaystyle\longleftarrow\atop N\in
 {\cal N}} \Pi_1^{abs} (\G,\Delta)/N
 $$
which is to say the completion of $\Pi_1^{abs}(\G,\Delta)$ with
respect to the topology defined by ${\cal N}$.

There is one important difference with the abstract case: in the profinite setting, the canonical homomorphisms  $\nu_m: \G(m)\longrightarrow \Pi$ $(m\in \Delta)$ are not embeddings in general  (cf.\ Examples 9.2.9 and 9.2.10 in \cite{ribes_profinite_2010}). We use the
 notation $\Pi(m) = {\rm Im}(\nu_m)$,  for $m\in \Delta$.

Associated with the graph of groups $(\G, \Delta)$ there is a corresponding  {\it standard graph} (or universal covering graph)
\[
S^{abs}=\bigcup_{m\in\Delta}
\Pi^{abs}/\Pi^{abs}(m)~.
\]
The set of vertices of $S^{abs}$ is given by
\[
V(S^{abs})=\bigcup_{v\in V(\Delta)}
\Pi^{abs}/\Pi^{abs}(v)~,
\]
and the incidence maps of $S^{abs}$ are given by the following formulae:
\begin{eqnarray*}
d_0 (g\Pi^{abs}(e)) & =  & g\Pi^{abs}(d_0(e)) ~;\\
 d_1(g\Pi^{abs}(e)) & = & gt_e\Pi^{abs}(d_1(e))~ (e\in E(\Delta))~.
\end{eqnarray*}
In fact $S^{abs}$  is a tree, usually called the \emph{Bass--Serre
tree} (cf.\ [\cite{dicks_groups_1980}, Chapter I, Theorem 7.6] or
[\cite{serre_arbres_1977}, part I, Section 5.3]). There is a
natural left action of $\Pi^{abs}$ on $S^{abs}$, and clearly
$\Pi^{abs}\backslash S^{abs}= \Delta$.

Analogously, there is a profinite standard graph   $S =\bigcup \Pi
/\Pi (m)$ associated with  a graph of profinite groups $(\G,
\Delta)$,  with the space of vertices and edges and with incidence
maps defined as above. In fact, $S$ is a profinite tree (cf.
Proposition 3.8 \cite{zalesskii_subgroups_1988}), and $\Pi$ acts
continuously on $S$ with $\Pi\backslash S= \Delta$.

Given an abstract graph of groups $(\G,\Delta)$, there is a naturally associated graph of profinite groups $(\bar{\G},\Delta)$.  Under certain natural hypotheses, we shall further see that $(\bar{\G},\Delta)$ is closely related to $(\G,\Delta)$.

First, suppose that $\Pi^{abs}=\Pi^{abs}(\G,\Delta)$ is residually finite, and let $\Pi$ be the profinite completion of $\Pi^{abs}$.  For each $m\in \Delta$, the profinite topology of $\Pi^{abs}$ induces on $\Pi^{abs}(m)$ a certain profinite topology (which is not necessarily its full profinite topology) and so on $\G(m)$. Define $\bar \G(m)$ to be the completion of $\G(m)$ with respect to this topology. Then the monomorphisms $\partial_i:\G(e)\longrightarrow \G(d_i(e))$ induce continuous monomorphisms which are again denoted by $\partial_i: \bar \G(e) \longrightarrow \bar \G(d_i(e)) $ $(i=0,1)$. We have then a graph $(\bar\G, \Delta)$ of profinite groups over $\Delta$. The canonical injection  $\G(m)\longrightarrow \Pi^{abs}$ induces an injection  $\bar \G(m)\longrightarrow \Pi$ \  $(m\in \Delta)$; furthermore, if we denote by $\Pi(m)$ the image of $\bar \G(m)$ on $\Pi$  under this injection, then $\Pi(m)= \overline{\Pi^{abs}(m)}$, the closure of $\Pi^{abs}(m)$ in $\Pi$.

   Clearly
$$\partial_0(g)= t_e\partial_1(g)t_e^{-1}\  (g\in \bar \G  (e) , e\in E(\Delta))$$  in $\Pi$  (there a certain abuse
of notation here, as we are identifying   $\bar \G  (v)$ with its
image  in $\Pi$, and similarly we are denoting both the original
elements $t_e$   ($e\in  E(\Delta)$)   and their images in $\Pi$, which is justified since with our assumptions $\Pi^{abs}$ injects into $\Pi$).  Furthermore, one checks immediately the following result.

\begin{proposition}
 The profinite completion
$\Pi$  of  $\Pi ^{abs}$ is  the fundamental profinite group $\Pi_1
(\bar \G, \Delta)$ of the graph of profinite
 groups $(\bar \G, \Delta)$. The canonical homomorphisms $\bar \G(m) \longrightarrow \Pi=\Pi_1 (\bar \G,
\Delta)$ are injective $(m\in   \Delta)$.
\end{proposition}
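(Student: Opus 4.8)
The final proposition asserts: given a residually finite abstract graph of groups $(\G,\Delta)$ over a finite graph, with $\Pi^{abs}$ its abstract fundamental group and $\Pi$ the profinite completion of $\Pi^{abs}$, the group $\Pi$ is the profinite fundamental group $\Pi_1(\bar\G,\Delta)$ of the associated graph of profinite groups $(\bar\G,\Delta)$, and moreover the canonical maps $\bar\G(m)\to\Pi$ are injective for each $m\in\Delta$. The text already establishes the injectivity of $\bar\G(m)\to\Pi$ (it is recorded that $\Pi(m)=\overline{\Pi^{abs}(m)}$), so the heart of the matter is the identification $\Pi\cong\Pi_1(\bar\G,\Delta)$.

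**The plan.** My strategy is to verify the defining profinite universal property of $\Pi_1(\bar\G,\Delta)$ directly for $\Pi$, using the universal property of the abstract fundamental group together with the fact that $\Pi$ is a completion. First I would assemble the data: we already have continuous homomorphisms $\bar\G(m)\to\Pi$ and elements $t_e\in\Pi$ (images of the abstract $t_e$) satisfying the compatibility relation $\partial_0(g)=t_e\partial_1(g)t_e^{-1}$ in $\Pi$ for $g\in\bar\G(e)$, $e\in E(\Delta)$, as noted just before the proposition. So $(\Pi,\{\bar\G(m)\to\Pi\},\{t_e\})$ constitutes a \emph{compatible system} in the profinite category, and by the universal property of $\Pi_1(\bar\G,\Delta)$ there is a unique continuous homomorphism $\Phi:\Pi_1(\bar\G,\Delta)\to\Pi$ making all the relevant triangles commute. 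The task then reduces to showing $\Phi$ is an isomorphism.

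**Constructing the inverse.** To produce an inverse I would exploit that $\Pi$ is the completion of $\Pi^{abs}$ with respect to the family $\mathcal N$ of finite-index normal subgroups whose intersection with each vertex group is open. The abstract universal property gives a homomorphism $\Pi^{abs}\to\Pi_1(\bar\G,\Delta)$ obtained from the natural maps $\G(m)\to\bar\G(m)\to\Pi_1(\bar\G,\Delta)$; since $\Pi_1(\bar\G,\Delta)$ is profinite, this extends to a continuous homomorphism $\Psi$ from the profinite completion of $\Pi^{abs}$ — that is, from $\Pi$ — to $\Pi_1(\bar\G,\Delta)$, \emph{provided} one checks that the topology used to form $\Pi_1(\bar\G,\Delta)$ is coarser than (or equal to) the full profinite topology on $\Pi^{abs}$, so that the map $\Pi^{abs}\to\Pi_1(\bar\G,\Delta)$ is continuous for the profinite topology. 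This is exactly where the definition of $\bar\G(m)$ as the completion of $\G(m)$ in the \emph{induced} topology is used: the normal subgroups defining $\Pi_1(\bar\G,\Delta)$ pull back to members of $\mathcal N$, so continuity holds. Once $\Psi$ is in hand, I would verify $\Phi\circ\Psi=\mathrm{id}$ and $\Psi\circ\Phi=\mathrm{id}$ by checking both composites on the dense images of the generating data — the vertex subgroups and the stable letters $t_e$ — where they act as the identity by construction; density and continuity then force equality on the whole groups.

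**The main obstacle.** The genuinely delicate point is the topological bookkeeping that makes $\Psi$ well defined and continuous: one must confirm that the topology on $\Pi^{abs}$ induced from $\Pi_1(\bar\G,\Delta)$ coincides with (a base of) the profinite topology determined by $\mathcal N$, so that $\Pi$ and $\Pi_1(\bar\G,\Delta)$ are completions with respect to compatible filtrations. Concretely, this amounts to matching the defining family $\mathcal N$ (finite-index normal subgroups of $\Pi^{abs}$ that are open on each vertex group) with the system of open normal subgroups of $\Pi_1(\bar\G,\Delta)$, and this matching is precisely guaranteed by the choice of the induced topology in the definition of $\bar\G(m)$ and by the residual finiteness of $\Pi^{abs}$. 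With that correspondence established, the proposition follows formally from the two universal properties, and the injectivity of each $\nu_m:\bar\G(m)\to\Pi$ is inherited from the already-recorded fact that its image equals the closure $\overline{\Pi^{abs}(m)}$ and that $\Pi^{abs}(m)\cong\G(m)$ embeds.
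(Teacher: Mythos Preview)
Your proposal is correct, and is precisely the verification the paper has in mind: the text does not give a proof of this proposition at all, only the phrase ``one checks immediately the following result,'' so your universal-property argument is the expected way to fill in that check.

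One small remark: the ``main obstacle'' you identify is slightly overstated. The continuity of the abstract map $\Pi^{abs}\to\Pi_1(\bar\G,\Delta)$ for the full profinite topology on $\Pi^{abs}$ is automatic, since any homomorphism from a group to a profinite group is continuous for the profinite topology on the source (the preimage of an open normal subgroup has finite index); this already yields your map $\Psi$ by the universal property of the profinite completion. The genuine point you need is rather that the image of $\Pi^{abs}$ is \emph{dense} in $\Pi_1(\bar\G,\Delta)$, so that $\Psi\circ\Phi$ can be checked to be the identity on a dense subset. This density follows because $\Pi_1(\bar\G,\Delta)$ is topologically generated by the images of the $\bar\G(m)$ and the stable letters $t_e$, and each $\bar\G(m)$ is by definition the completion of $\G(m)$ in the induced topology, so $\G(m)$ is dense in $\bar\G(m)$. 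With that observed, your two composites agree with the identity on dense subsets and the argument closes exactly as you describe.
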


We make a further assumption, namely that $(\G,\Delta)$ is \emph{sufficient}.

\begin{definition}
A graph of groups $(\G,\Delta)$ is \emph{sufficient} if:
\begin{enumerate}
\item the fundamental group $\Pi^{abs}=\Pi^{abs}_1(\G,\Delta)$ is
residually finite; and \item for each $m\in \Delta$, $\Pi^{abs}
(m)$ is closed in the profinite topology of  $\Pi^{abs}$ (or,
equivalently, $\Pi(m)\cap \Pi^{abs}= \Pi^{abs}(m)$).
\end{enumerate}
\end{definition}

Consider the natural morphism of graphs
\[
\varphi:S ^{abs}\longrightarrow S
\]
which on vertices and edges is
\[
g\Pi^{abs}(v)\mapsto g\Pi (v), \quad g\Pi^{abs}(e)\mapsto g\Pi (e)\quad (g\in \Pi^{abs}, v\in V(\Delta), e\in E(\Delta))~.
\]
If $(\G,\Delta)$ is sufficient then $\varphi$ is an injection of graphs;  we think of $S ^{abs}$ as a subgraph of $S$. Moreover it is clear that $S ^{abs}$ is dense in $S$.   We collect all of this in the following proposition.

\begin{proposition}
 Let $(\G, \Delta)$  be a sufficient graph of abstract groups over a finite connected graph $\Delta$, and consider the graph
$(\bar \G, \Delta)$ of profinite groups over $\Delta$ such that each $\bar \G(m)$ is the completion of $\G(m)$ with respect to the
topology induced by the profinite topology of $\Pi^{abs}$. Then the standard tree
 $S^{abs}=S^{abs}(\G, \Delta)$ of the graph of groups $(\G, \Delta)$ is canonically embedded in the standard profinite tree $S=S(\bar \G, \Delta)$ of the graph of profinite groups $(\bar\G, \Delta)$,  and $S^{abs}$  is dense in $S$.
\end{proposition}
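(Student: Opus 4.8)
The plan is to establish the two claims comprising the proposition separately: that the canonical graph morphism $\varphi\colon S^{abs}\to S$ is injective, and that its image is dense in $S$. That $\varphi$ is a morphism of graphs requires no argument beyond inspection, since the incidence maps of $S$ are $d_0(g\Pi(e))=g\Pi(d_0(e))$ and $d_1(g\Pi(e))=gt_e\Pi(d_1(e))$, which have exactly the same shape as those of $S^{abs}$, and the elements $t_e$ lie in $\Pi^{abs}\subseteq\Pi$; hence $\varphi$ commutes with $d_0$ and $d_1$ and carries $V(S^{abs})$ into $V(S)$ and $E(S^{abs})$ into $E(S)$. The only hypothesis that does real work is sufficiency, and it does so precisely in the injectivity step.

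For injectivity, note that since the standard graphs are indexed disjoint unions over $m\in\Delta$ and $\varphi$ respects this indexing (vertices to vertices, edges to edges), it is enough to fix a single $m$ and show $\varphi$ is injective on the corresponding component. Suppose $g,g'\in\Pi^{abs}$ satisfy $\varphi(g\Pi^{abs}(m))=\varphi(g'\Pi^{abs}(m))$, that is $g\Pi(m)=g'\Pi(m)$, so that $g^{-1}g'\in\Pi(m)$. Since also $g^{-1}g'\in\Pi^{abs}$, the second condition in the definition of sufficiency, in its equivalent form $\Pi(m)\cap\Pi^{abs}=\Pi^{abs}(m)$, yields $g^{-1}g'\in\Pi^{abs}(m)$, whence $g\Pi^{abs}(m)=g'\Pi^{abs}(m)$. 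Thus $\varphi$ is injective, and we may identify $S^{abs}$ with a subgraph of $S$.

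For density I would use the fact, recalled earlier in this section, that every profinite graph is the inverse limit of its finite quotient graphs. Concretely, the quotients $N\backslash S$ by open normal subgroups $N\triangleleft_o\Pi$ form a cofinal system of finite quotient graphs (each $N\backslash S$ being finite, as $\Pi/N$ and $\Pi\backslash S=\Delta$ are finite), so that $S=\varprojlim_N N\backslash S$; consequently a subset of $S$ is dense exactly when it surjects onto every such $N\backslash S$. Now $\Pi$ is the profinite completion of $\Pi^{abs}$, so $\Pi^{abs}$ is dense in $\Pi$ and hence $\Pi^{abs}N=\Pi$ for every open $N$, meaning the image of $\Pi^{abs}$ in $\Pi/N$ is all of $\Pi/N$. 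It follows that the image of $\varphi(S^{abs})=\bigcup_{m}\{g\Pi(m):g\in\Pi^{abs}\}$ in $N\backslash S=\bigcup_{m}N\backslash\Pi/\Pi(m)$ is the entire finite graph, for every $N$, and therefore $\varphi(S^{abs})$ is dense in $S$.

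The main obstacle, and essentially the only place any hypothesis is used nontrivially, is the injectivity step: it is exactly here that sufficiency is indispensable. Without the closedness of each $\Pi^{abs}(m)$ in the profinite topology of $\Pi^{abs}$---equivalently, the identity $\Pi(m)\cap\Pi^{abs}=\Pi^{abs}(m)$---the map $\varphi$ could identify distinct cosets and $S^{abs}$ would fail to embed, a phenomenon of the same nature as the failure of the canonical maps $\nu_m$ to be injective in general (cf.\ Examples 9.2.9--9.2.10 of \cite{ribes_profinite_2010}). Once injectivity is secured, density is a formal consequence of the density of a group in its profinite completion, so no further difficulty arises there.
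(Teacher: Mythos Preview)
Your proof is correct and follows exactly the approach the paper takes: in the paper the argument is given in the paragraph immediately preceding the proposition (defining $\varphi$, noting that sufficiency---via $\Pi(m)\cap\Pi^{abs}=\Pi^{abs}(m)$---gives injectivity, and asserting density as ``clear''), with the proposition merely summarizing that discussion. You have simply filled in the details the paper leaves implicit, particularly the density argument via the inverse limit of finite quotient graphs $N\backslash S$.
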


The situation is particularly attractive if the profinite topology induced on the vertex and edge groups $\G(m)$ coincides with their intrinsic profinite topology.

\begin{definition}
Suppose that $(\G,\Delta)$ is sufficient and that $\Pi^{abs}$ is the fundamental group.  If, furthermore, for each $m\in\Delta$, each finite-index subgroup $H$ of $\G(m)$ is closed in the profinite topology on $\Pi^{abs}$, then $(\G,\Delta)$ is called \emph{efficient}.
\end{definition}

In this case, the vertex and edge stabilizers in the profinite tree are conjugate to the profinite completions of the vertex and edge stabilizers in the standard tree. Results of Haglund and Wise imply that the graphs of groups arising in quasiconvex hierarchies of virtually special groups are efficient \cite{haglund_special_2008}.

\begin{definition}
We say that $(\G,\Delta)$ is \emph{of dihedral type} if $\Pi^{abs}$
has a normal subgroup $V$ contained in a vertex group
 such that $\Pi^{abs}/N$ is either infinite dihedral or cyclic.
\end{definition}

We shall need the following proposition for the proof of Theorem \ref{thm: Profinite Seifert conjecture}.

 \begin{proposition}\label{prop:normal}
 Let $\Pi^{abs} = \Pi_1^{abs} (\G,\Delta)$ be the
 fundamental group of an efficient finite graph of  groups
 $(\G,\Delta)$ not of dihedral type and $K$ a closed normal subgroup of $\widehat\Pi=\Pi_1 (\bar\G,\Delta)$ having no non-abelian free profinite subgroup. Then $K$ is contained in the completion of a vertex group of $\wh\Pi$.
 \end{proposition}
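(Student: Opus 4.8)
The plan is to promote the given data to the action of $\widehat{\Pi}$ on the standard profinite tree $S$ associated with $(\bar{\G},\Delta)$, and to show that $K$ must fix a vertex. Since $(\G,\Delta)$ is efficient, the vertex stabilizers of $S$ are exactly the conjugates of the vertex-group completions, so ``$K$ is contained in the completion of a vertex group'' is equivalent to ``$K$ fixes a vertex of $S$.'' Let $D$ be the minimal $K$-invariant subtree, which exists by \cite[Lemma~1.5]{zalesskii_profinite_1990}. If $D$ is a single vertex we are done, so I assume $D$ is infinite; then it is unique.

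First I would propagate $D$ to the whole group. Because $K\triangleleft\widehat{\Pi}$, for any $g\in\widehat{\Pi}$ the translate $gD$ is again a minimal $K$-invariant subtree, so $gD=D$ by uniqueness; hence $D$ is $\widehat{\Pi}$-invariant, and in fact it is the minimal $\widehat{\Pi}$-invariant subtree. Let $C$ be the pointwise stabilizer of $D$. Then $C$ is a closed normal subgroup of $\widehat{\Pi}$, and since $C$ fixes any chosen vertex of $D$ it is contained in a vertex-group completion. Write $G^{*}=\widehat{\Pi}/C$, which acts faithfully on $D$, and set $L=KC/C\triangleleft G^{*}$; note $L\neq 1$ and $L$ acts with no global fixed vertex.

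The heart of the argument is to show that $D$ is a \emph{profinite line}. Applying Theorem \ref{profinite} to $K$ acting on $D$ (whose hypothesis on free pro-$p$ subgroups is the property used here, and is automatic in the applications, e.g.\ when $K$ is procyclic) shows that $L$ is soluble of one of the three listed types. In each case $L$ contains a nontrivial closed abelian subgroup $A$ that is characteristic in $L$ — either $L$ itself when $L$ is abelian, or the closed derived subgroup $\overline{[L,L]}$ otherwise — and hence $A\triangleleft G^{*}$. The fixed subtree $D^{A}$ is $G^{*}$-invariant by normality; by minimality of $D$ it is empty or all of $D$, and it cannot be all of $D$ since $G^{*}$ acts faithfully and $A\neq 1$. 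Thus the abelian group $A$ acts on $D$ with no global fixed vertex, and by the structure theory of \cite{zalesskii_profinite_1990} its unique minimal invariant subtree is a profinite line. Normality of $A$ in $G^{*}$ together with uniqueness forces this line to be $G^{*}$-invariant, hence equal to $D$. Consequently $G^{*}$ embeds in the automorphism group $\widehat{\Z}\rtimes C_{2}$ of a profinite line, so $G^{*}$ is procyclic or profinite dihedral. I expect this structural step — proving that an abelian profinite group acting without a fixed vertex has a profinite line as its minimal subtree, and thereby ruling out the genuinely twisted cases (types 1 and 3) of Theorem \ref{profinite} — to be the main obstacle.

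Finally I would read off dihedral type. We have exhibited a closed normal subgroup $C$ of $\widehat{\Pi}$, contained in a vertex-group completion, with $\widehat{\Pi}/C$ procyclic or profinite dihedral. Passing to the dense abstract subgroup $\Pi^{abs}$: since $S^{abs}$ is dense in $S$ and $D$ is a line, $\Pi^{abs}$ acts on the corresponding abstract line $D^{abs}$ without fixing a vertex, yielding a quotient $\Pi^{abs}/N$ with $N=C\cap\Pi^{abs}$ contained in a vertex group and $\Pi^{abs}/N$ infinite cyclic or infinite dihedral. Thus $(\G,\Delta)$ is of dihedral type, contradicting the hypothesis; therefore $D$ was a vertex and $K$ lies in a vertex-group completion. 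The only delicate point in this last step is matching the profinite conclusion to the abstract definition of dihedral type — ensuring $D^{abs}$ is a genuine bi-infinite line and that $N$ lands in an abstract vertex group — but if one reads ``dihedral type'' at the profinite level the conclusion is immediate from the previous paragraph.
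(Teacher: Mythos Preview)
Your route is genuinely different from the paper's, and it has a real gap at the point where you invoke Theorem~\ref{profinite}. That theorem requires that $K$ have no non-abelian free \emph{pro-$p$} subgroup for any prime $p$, whereas the proposition only assumes that $K$ has no non-abelian free \emph{profinite} subgroup. These are not comparable in the direction you need: a non-abelian free pro-$p$ group satisfies the proposition's hypothesis (being pro-$p$, it contains no free profinite group of rank $\ge 2$) but blatantly fails the hypothesis of Theorem~\ref{profinite}. You flag this (``automatic in the applications''), but as written your argument proves only a weaker statement than the one claimed. There is also a secondary issue in your final step: taking $N=C\cap\Pi^{abs}$ does land in an abstract vertex group by efficiency, but a dense abstract subgroup of a procyclic group need not be infinite cyclic, so matching the profinite conclusion to the paper's abstract definition of dihedral type needs more than you have said.

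For comparison, the paper avoids both problems by never using the minimal subtree or Theorem~\ref{profinite}. It writes $\widehat\Pi=\varprojlim_N \Pi_1(\G_N,\Delta)$, where $(\G_N,\Delta)$ replaces each $\G(m)$ by the finite group $\G(m)N/N$; thus each $\Pi_1(\G_N,\Delta)$ is the profinite completion of a virtually free group. Because $(\G,\Delta)$ is not of dihedral type, for deep enough $N$ neither is $(\G_N,\Delta)$, and then $\Pi_1(\G_N,\Delta)$ contains an \emph{open} non-abelian free profinite subgroup $F$. The image $K_N$ of $K$ is normal, so $K_N\cap F$ is a closed normal subgroup of $F$; if it were nontrivial it would contain a non-abelian free profinite subgroup, and since free profinite groups are projective this would split back to a non-abelian free profinite subgroup of $K$, contradicting the hypothesis. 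Hence $K_N$ is finite, so fixes a vertex by \cite[Theorem~3.10]{zalesskii_subgroups_1988}, and the inverse limit gives the conclusion. The point is that projectivity transfers the ``no free profinite subgroup'' hypothesis from $K$ to its images $K_N$; there is no analogous transfer for the free pro-$p$ condition, which is why your approach cannot reach the proposition as stated.
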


 \begin{proof} Use a decomposition $$\widehat\Pi=\lim\limits_{\displaystyle\longleftarrow\atop N
 } \Pi_1^{abs} (\G,\Delta)/N,$$ where $N$ ranges over normal subgroups of finite index in $\Pi^{abs}$. For any such $N$ define a
 graph of group $(\G_N,\Delta)$ replacing $\G(m)$ by $\G(m)N/N$
 for all $m\in \Delta$. Note that
 $\Pi_1(\G_N,\Delta)=\widehat{\Pi_1^{abs}(\G_N,\Delta)}$ and so
 contains a free profinite group as an open subgroup. It is clear also that it is a quotient group of $\widehat\Pi$ and $$\widehat\Pi=\lim\limits_{\displaystyle\longleftarrow\atop N
 } \Pi_1 (\G_N,\Delta).$$
 We denote by $K_N$ the image of $K$ in $\Pi_1 (\G_N,\Delta)$.
  Since $(\G,\Delta)$ is not
 of dihedral type, $(\G_N,\Delta)$ is not of dihedral type for some $N$ as
 well. Hence $\widehat{\Pi_1^{abs}(\G_N,\Delta)}$ contains an open
 non-abelian
 free profinite subgroup $F$. Then by \cite[Theorem 8.6.6]{ribes_profinite_2010} $K_N\cap F=1$ and so $K_N$ is finite. Hence by \cite[Theorem 3.10]{zalesskii_subgroups_1988} $K_N$ is contained in
 a vertex group of $\Pi_1(\G_N,\Delta)$. It follows then from the
 decomposition $$\widehat\Pi=\lim\limits_{\displaystyle\longleftarrow\atop N
 } \Pi_1 (\G_N,\Delta)$$ that $K$ is contained in  a vertex group of $\Pi_1
 (\G,\Delta)$ as needed. \end{proof}

\section{Virtually special hyperbolic groups}

In this section we prove Theorem \ref{thm: Hyp special}.   By
Theorem \ref{thm: Hierarchies}, a hyperbolic, virtually special
group $\Gamma$ has a subgroup $\Gamma_0$ of finite index that
admits a malnormal quasiconvex hierarchy.  (Note that this
statement does not need the most difficult parts of Theorem
\ref{thm: Hierarchies}; it is a consequence of the fact that
quasiconvex subgroups of hyperbolic groups have finite width,
together with the fact that hyperplane stabilizers in virtually
special groups are separable.)  Our proof of Theorem \ref{thm: Hyp
special} will be by induction on the length of the hierarchy for
$\Gamma_0$.

Recall the following standard definition.

\begin{definition}\label{def: Acylindrical}
The action of a group $\Gamma$ on a tree $T$ is said to be \emph{$k$-acylindrical}, for $k$ a constant, if the set of fixed points of $\gamma$ has diameter at most $k$ whenever $\gamma\neq 1$.
\end{definition}

We may make the analogous definition in the profinite setting.

\begin{definition}\label{def: Profinite acylindrical}
The action of a profinite group $\widehat{\Gamma}$ on a profinite tree $T$ is said to be \emph{$k$-acylindrical}, for $k$ a constant, if the set of fixed points of $\gamma$ has diameter at most $k$ whenever $\gamma\neq 1$.
\end{definition}

A malnormal hierarchy for $\Gamma_0$ implies that $\Gamma_0$ has a 1-acylindrical action on a tree.  This result carries over to the profinite setting.

\begin{lemma}\label{lem: Acylindrical action}
Suppose that $\Pi$ is the (profinite) fundamental group of a graph of profinite groups $(\bar{\G},\Delta)$ with one edge $e$, and suppose that the edge group $\Pi(e)$ is malnormal in $\Pi$.  Then the action of $\Pi$ on the standard tree $S$ is 1-acylindrical.
\end{lemma}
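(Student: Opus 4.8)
The plan is to show that if the edge group $\Pi(e)$ is malnormal in $\Pi$, then no nontrivial element of $\Pi$ can fix two distinct edges of the standard tree $S$, which is exactly $1$-acylindricity (fixing a segment of diameter at most $1$, i.e.\ a single edge). The standard tree here has vertex set $\Pi/\Pi(d_0(e)) \cup \Pi/\Pi(d_1(e))$ and edge set $\Pi/\Pi(e)$, with the two incidence maps sending an edge coset $g\Pi(e)$ to $g\Pi(d_0(e))$ and $gt_e\Pi(d_1(e))$. The stabilizer of an edge $g\Pi(e)$ is the conjugate $\Pi(e)^{g^{-1}} = g\Pi(e)g^{-1}$, and the key point is that edge stabilizers are precisely the conjugates of the edge group.

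First I would fix a nontrivial $\gamma \in \Pi$ and suppose it fixes two distinct edges, say $g\Pi(e)$ and $h\Pi(e)$. Then $\gamma$ lies in the intersection of the two edge stabilizers, $\gamma \in g\Pi(e)g^{-1} \cap h\Pi(e)h^{-1}$. Conjugating by $g^{-1}$, the element $g^{-1}\gamma g$ lies in $\Pi(e) \cap (g^{-1}h)\Pi(e)(g^{-1}h)^{-1}$, a nontrivial intersection of $\Pi(e)$ with a conjugate of itself. By the malnormality hypothesis on $\Pi(e)$ in $\Pi$, this forces $g^{-1}h \in \Pi(e)$, hence $g\Pi(e) = h\Pi(e)$, contradicting that the two edges were distinct. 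So $\gamma$ fixes at most one edge.

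The remaining step is to translate ``fixes at most one edge'' into the diameter bound in Definition \ref{def: Profinite acylindrical}. I would argue that the fixed-point set of $\gamma$ is a subtree (the fixed set of a single automorphism of a profinite tree is a closed subgraph, and being fixed is preserved under the incidence maps, so it is a profinite subtree) that contains no two distinct edges; a subtree containing at most one edge has diameter at most $1$. One should check that a nontrivial element cannot fix two vertices at distance $2$ without fixing the edge between them and the intervening vertex: any geodesic joining two fixed vertices is itself fixed pointwise (the geodesic $[v,w]$ is canonically determined by its endpoints, hence preserved), so if two fixed vertices were at distance $2$ the connecting edge would be fixed, and a second fixed vertex at distance $2$ on the other side would produce a second fixed edge—again contradicting the previous paragraph.

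The main obstacle I expect is purely at the level of profinite subtlety rather than the group theory: one must be careful that ``diameter'' and ``geodesic'' behave as in the classical case, using the definition of $[v,w]$ as the smallest profinite subtree containing $v$ and $w$ and the fact (implicit in the profinite tree structure recalled in Section \ref{sec: Profinite trees}) that fixed-point sets are profinite subtrees and that an element fixing the endpoints of a geodesic fixes the geodesic pointwise. The algebraic core—deducing a collapse of edge cosets from malnormality—is immediate, so the care needed is in justifying that ruling out two fixed edges genuinely yields the diameter-$1$ bound in the profinite setting.
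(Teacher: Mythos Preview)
Your proposal is correct and follows essentially the same approach as the paper: both identify edge stabilizers as conjugates $g\Pi(e)g^{-1}$ and use malnormality of $\Pi(e)$ to conclude that the intersection of stabilizers of any two distinct edges is trivial. The paper's proof is more terse, simply asserting that 1-acylindricity ``means that the intersection of the stabilizers of any two distinct edges is trivial'' and omitting the fixed-subtree and geodesic considerations that you spell out in your final two paragraphs; your extra care in translating the edge-stabilizer statement into the diameter bound of Definition~\ref{def: Profinite acylindrical} is not wrong, merely more explicit than the paper chooses to be.
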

\begin{proof}
For an action to be 1-acylindrical means that the intersection of the stabilizers of any two distinct edges is trivial.  The hypothesis that there is only one edge $e$ means that any distinct pair of edges of $S$ correspond to distinct left-cosets $g\Pi(e)$ and $h\Pi(e)$, and their stabilizers are the conjugates $g\Pi(e)g^{-1}$ and $h\Pi(e)h^{-1}$.  Malnormality implies that the intersection of these is trivial.
\end{proof}

We are now ready to prove that closed subgroups of the profinite completions of virtually special hyperbolic groups satisfy a Tits alternative.

\begin{proof}[Proof of Theorem \ref{thm: Hyp special}]
Put $H_0=\widehat\Gamma_0\cap H$, where $\Gamma_0$ is the subgroup from Theorem \ref{thm: Hierarchies}. By Proposition \ref{prop: VS good}, $\widehat \Gamma_0$ is torsion free.  We prove that $H_0\cong \Z_\pi\rtimes \Z_\rho$, with $\pi\cap \rho=\emptyset$, by induction on the length of the malnormal hierarchy. By Theorem \ref{thm: Malnormal closure} and Lemma \ref{lem: Acylindrical action}, $\widehat{\Gamma}_0$ acts 1-acylindrically on a profinite tree and therefore so does $H_0$. By the inductive hypothesis, the vertex stabilzers in $H_0$ have the claimed structure.  If $H_0$ stabilizes a vertex by the induction hypothesis we are done. Otherwise, by Remark \ref{cases}, $H_0$ has the claimed structure.
\end{proof}

Theorem \ref{thm: Profinite hyperbolization} follows as a quick consequence.

\begin{proof}[Proof of Theorem \ref{thm: Profinite hyperbolization}]
By Theorem \ref{thm: VH conjecture}, if $M$ is a closed hyperbolic 3-manifold then $\pi_1M$ is virtually special, and hence Theorem \ref{thm: Hyp special} applies to show that there are no $\widehat{\Z}^2$ subgroups.  Conversely, if $\wh{\pi_1M}$ contains no subgroups isomorphic to $\wh{\Z}^2$ then, by Hamilton's result that abelian subgroups of 3-manifold groups are separable \cite{hamilton_abelian_2001}, $\pi_1M$ contains no subgroups isomorphic to $\Z^2$ and so, by the Hyperbolization theorem, $M$ is hyperbolic.
\end{proof}

\section{Seifert fibred 3-manifolds}\label{sec: Seifert fibred}

This section is devoted to characterizing certain features of Seifert fibred 3-manifolds using the profinite completions of their fundamental groups.  We start by proving Theorem \ref{thm: Profinite Seifert conjecture}, which  shows that the profinite completion of the fundamental group distinguishes Seifert fibred 3-manifolds among all 3-manifolds.

\begin{proof}[Proof of Theorem \ref{thm: Profinite Seifert conjecture}]
Suppose that $M$ is closed, orientable and irreducible.  If $M$ is Seifert fibred then $\pi_1M$ has a cyclic normal subgroup $Z$.   Since every finitely generated subgroup of $\pi_1M$ is separable \cite{scott_subgroups_1978,scott_correction_1985} $\wh{\pi_1M}$ has a procyclic normal subgroup, as claimed.

Suppose therefore that $M$ is not Seifert fibred.  If $M$ is a
Sol-manifold, then $\pi_1M\cong\Z^2\rtimes\Z$, where the action of
$\Z=\langle z\rangle$ on $\Z^2$ is Anosov, in the sense that no non-trivial power of $z$ fixes a non-trivial element of $\Z^2$. This induces
$\wh{\pi_1M}\cong\wh{\Z}^2\rtimes\wh{\Z}$.

We next show that $z$ does not fix a non-trivial element of
$\wh{\Z}^2$. Since $\wh{\Z}^2=\prod_p \Z_p^2$ it suffices to show
that $z$ does not fix a nontrivial vector in $\Z_p^2$. We show in
fact that $z$ does not fix a vector in $\Q_p^2$. Indeed, if it
does then $det(z-I)=0$ and so $z$ has a non-zero fixed point in
$\Q^2$ and hence in $\Z^2$, a contradiction.

Thus any procyclic normal subgroup of
$\wh{\pi_1M}\cong\wh{\Z}^2\rtimes\wh{\Z}$ intersects $\wh{\Z}^2$ trivially
 and so is in the kernel of the natural homomorphism
$\widehat{\langle z\rangle} \to Aut(\wh{\Z}^2)\cong
GL_2(\widehat\Z)$. Therefore to conclude that $\wh{\pi_1M}$ does
not have procyclic normal subgroups in this case, it suffices to
observe now that the closure of $\langle z\rangle$ in
$GL_2(\widehat\Z)$ is isomorphic to $\widehat \Z$. To see this it
is enough  to show that all Sylow subgroups of $\widehat{\langle
z\rangle}$ are infinite. As $GL_2(\widehat\Z)= \prod_p GL_2(\Z_p)$
and $GL_2(\Z)$ embeds in the virtually pro-$p$ group $GL_2(\Z_p)$,
the closure of $\langle z\rangle$ in $GL_2(\Z_p)$ has infinite
Sylow $p$-subgroup for each $p$ and therefore so does
$\widehat{\langle z\rangle}$.

If $M$ is non-geometric or hyperbolic, then $\wh{\pi_1M}$ acts acylindrically on a profinite tree.   (In the hyperbolic case, this follows from the results of the previous section. In the non-geometric case, this follows from the results of \cite{hamilton_separability_2013} and \cite{wilton_profinite_2010}.)  By Proposition \ref{prop:normal}, a procyclic normal subgroup $\wh{Z}$ of $\wh{\pi_1M}$ is contained in a vertex stabilizer, and hence acts trivially on the profinite tree \cite[Theorem 2.12]{zalesskii_subgroups_1988}.  This would contradict the fact that the action is acylindrical unless $\wh{Z}=1$.
\end{proof}

A compact Seifert fibred 3-manifold $M$ can admit any of six different geometries.  We next explain how these geometric structures are also distinguished by the profinite completion of the fundamental group.  The reader is referred to \cite{scott_geometries_1983} for a details of the theory of Seifert fibred 3-manifolds. Consider the corresponding short exact sequence
\[
1\to Z\to \pi_1M\to \pi_1O\to 1
\]
where $Z$ is cyclic and $O$ is a compact, cone-type 2-orbifold.   The geometric structure of $M$ is determined by two invariants: the geometric structure of $O$; and the `Euler number' $e(M)$ \cite[Table 4.1]{scott_geometries_1983}.

We first note that the geometry of $O$ is detected by the profinite completion of $\pi_1M$.

\begin{lemma}\label{lem: orbifold geometry}
Let $M$ be a compact, Seifert fibred 3-manifold, as above:
\begin{enumerate}
\item $O$ is spherical if and only if $\wh{\pi_1M}$ is virtually procyclic;
\item $O$ is Euclidean if and only if $\wh{\pi_1M}$ is virtually nilpotent.
\end{enumerate}
\end{lemma}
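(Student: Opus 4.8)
The plan is to push the defining extension of $\pi_1M$ down to its profinite completion and then read the virtual structure of $\wh{\pi_1M}$ off that of the orbifold group $\pi_1O$. First I would record the two inputs that make this possible. Since $M$ is Seifert fibred, $\pi_1M$ is LERF by Scott \cite{scott_subgroups_1978,scott_correction_1985}, so in particular $Z$ and all of its finite-index subgroups are separable; hence the profinite topology of $\pi_1M$ induces the full profinite topology on $Z$, and the sequence
\[
1\to \wh{Z}\to \wh{\pi_1M}\to \wh{\pi_1O}\to 1
\]
is exact, with $\wh{Z}$ procyclic. The second input is the trichotomy for the residually finite group $\pi_1O$: as a compact $2$-orbifold group it is finite if $O$ is spherical, virtually $\Z^2$ if $O$ is Euclidean, and contains a non-abelian free subgroup if $O$ is hyperbolic; moreover $\pi_1O$ is itself LERF, so it embeds in $\wh{\pi_1O}$ and its free subgroups close up to copies of $\wh{F_2}$.

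For part (1), the forward direction is immediate: if $O$ is spherical then $\wh{\pi_1O}=\pi_1O$ is finite, so $\wh{Z}$ is open in $\wh{\pi_1M}$ and $\wh{\pi_1M}$ is virtually procyclic. Conversely, if $\wh{\pi_1M}$ is virtually procyclic then so is its quotient $\wh{\pi_1O}$; since $\pi_1O$ embeds in $\wh{\pi_1O}$ it is virtually cyclic, and the trichotomy rules out the Euclidean and hyperbolic cases, which would force a copy of $\wh{\Z}^2$, respectively a non-abelian free profinite subgroup, inside the virtually procyclic group $\wh{\pi_1O}$. Hence $\pi_1O$ is finite and $O$ is spherical.

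For part (2), in the forward direction I would pass to a finite-index subgroup $O'\subseteq O$ with $\pi_1O'\cong\Z^2$ and pull back to a finite-index subgroup $\pi_1M'\subseteq\pi_1M$ sitting in $1\to Z\to\pi_1M'\to\Z^2\to 1$; after a further index-two pass the conjugation action of $\pi_1M'$ on $Z\cong\Z$, which is valued in $\{\pm1\}$, becomes trivial, so $\pi_1M'$ is a central extension of $\Z^2$ by a cyclic group and is therefore nilpotent of class at most $2$. Thus $\pi_1M$ is virtually nilpotent, and since the profinite completion of a finitely generated nilpotent group is nilpotent of the same class (and $\pi_1M'$ is separable by LERF, so $\wh{\pi_1M'}$ is open), $\wh{\pi_1M}$ is virtually nilpotent. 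Conversely, if $\wh{\pi_1M}$ is virtually nilpotent then so is the quotient $\wh{\pi_1O}$; a virtually nilpotent profinite group cannot contain a non-abelian free pro-$p$ subgroup, whereas $\wh{F_2}$ does, so the hyperbolic case is excluded and $O$ is spherical or Euclidean. Combined with part (1), which isolates the spherical case via virtual procyclicity, this leaves exactly the Euclidean case.

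The step I expect to be the main obstacle is the exactness of the displayed profinite sequence, and specifically its left-exactness: that $\wh{Z}$ injects into $\wh{\pi_1M}$ as the kernel of the map to $\wh{\pi_1O}$. This is precisely where separability of $Z$ and of its finite-index subgroups in $\pi_1M$ enters, through Scott's LERF theorem; once it is in hand the two characterizations follow formally from the orbifold trichotomy together with the fact that non-abelian free pro-$p$ subgroups obstruct virtual nilpotency. A secondary point worth flagging, since it affects how the statement is read, is that virtual procyclicity is a special case of virtual nilpotency: the ``only if'' in (2) must be understood in tandem with (1), with virtual nilpotency ruling out a hyperbolic base and virtual procyclicity then separating the spherical case from the Euclidean one.
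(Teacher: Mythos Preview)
Your argument is correct, but it is considerably more elaborate than the paper's. The paper dispatches the lemma in one sentence: classical Seifert-fibred theory already says that $O$ is spherical iff $\pi_1M$ is virtually cyclic, and $O$ is Euclidean iff $\pi_1M$ is virtually nilpotent; residual finiteness of $\pi_1M$ then immediately upgrades these to the profinite statements (since $\pi_1M\hookrightarrow\wh{\pi_1M}$, an open procyclic or nilpotent subgroup of $\wh{\pi_1M}$ meets $\pi_1M$ in a finite-index cyclic or nilpotent subgroup, while conversely these properties pass to completions). In particular the paper never needs the profinite exact sequence $1\to\wh{Z}\to\wh{\pi_1M}\to\wh{\pi_1O}\to 1$, so the left-exactness you flag as the main obstacle simply does not arise, and Scott's LERF theorem is not invoked here.

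What your route buys is self-containment: by pushing the problem down to $\wh{\pi_1O}$ and invoking the orbifold trichotomy there, you in effect reprove the two classical equivalences rather than citing them as black boxes. That is legitimate, and your handling of the Euclidean forward direction (passing to a central extension of $\Z^2$ by $\Z$) is a clean explicit argument. Your closing remark about the literal reading of part~(2)---that a spherical base also yields a virtually nilpotent $\wh{\pi_1M}$---is well taken; the lemma is intended as a trichotomy together with part~(1), which is exactly how it is used downstream in Theorem~\ref{thm: Geometry}.
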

\begin{proof}
This follows immediately from the following facts: $O$ is spherical if and only if $\pi_1M$ is virtually cyclic; $O$ is Euclidean if and only if $\pi_1M$ is virtually nilpotent; $\pi_1M$ is residually finite.
\end{proof}

If $M$ is not spherical then $O$ is a good orbifold, and so is covered by a compact, orientable surface $\Sigma$. The short exact sequence for $\pi_1M$ then pulls back along the covering map $\Sigma\to O$ to a short exact sequence
\[
1\to Z\to \pi_1N\to \pi_1\Sigma\to 1
\]
where $N$ is a finite-sheeted covering space of $M$.   We next explain how to the Euler number of $M$ manifests itself in the structure of $\pi_1M$.  The following lemma can be deduced from \cite{scott_geometries_1983}.

\begin{lemma}\label{lem: Euler number}
Let $M$ be a compact, Seifert fibred 3-manifold.
\begin{enumerate}
\item If $O$ is spherical then $e(M)= 0$ if and only $\pi_1M$ is infinite.
\item If $O$ is not spherical then $e(M)=0$ if and only if, for some finite-sheeted surface cover $\Sigma$ of $O$, the short exact sequence
\[
1\to Z\to\pi_1N\to\pi_1\Sigma\to 1
\]
splits.
\end{enumerate}
\end{lemma}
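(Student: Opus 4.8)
The plan is to deduce both statements from the structure theory of Seifert fibred spaces recorded in \cite{scott_geometries_1983}, reading off the Euler number $e(M)$ from the algebra of the central extension
\[
1\to Z\to\pi_1M\to\pi_1O\to 1,
\]
in which the regular fibre generates the central subgroup $Z$. The key external inputs I would invoke are: (a) that the geometry of $M$ is determined by the geometry of $O$ together with the vanishing or non-vanishing of $e(M)$ (Table 4.1 of \cite{scott_geometries_1983}); (b) that for a genuine circle bundle the extension class of $1\to\mathbb{Z}\to\pi_1N\to\pi_1\Sigma\to 1$ coincides with the Euler number; and (c) the multiplicativity of the Euler number under fibre-preserving covers.

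For part (1), since $O$ is spherical the group $\pi_1O$ is finite, so $\pi_1M$ is finite if and only if the fibre subgroup $Z$ is finite. A Seifert fibred space over a spherical base orbifold carries $S^3$-geometry when $e(M)\neq 0$ and $S^2\times\mathbb{R}$-geometry when $e(M)=0$. In the former case $\pi_1M$ is finite (and the fibre has finite order), while in the latter $\pi_1M$ is infinite, being virtually $\mathbb{Z}$ with $Z$ of infinite order. Hence $e(M)=0$ if and only if $\pi_1M$ is infinite, which is the first claim.

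For part (2), since $O$ is not spherical it is Euclidean or hyperbolic, so $\pi_1O$ is infinite and the fibre subgroup $Z\cong\mathbb{Z}$ is infinite cyclic. I would pass to a surface cover $\Sigma\to O$; as $O$ is good, such a cover exists and $\Sigma$ is an honest $2$-manifold, so the cover necessarily unwraps every cone point of $O$. Consequently the induced fibration $N\to\Sigma$ has no exceptional fibres, i.e.\ it is a genuine circle bundle over a closed orientable surface of genus $g\geq 1$. The sequence $1\to\mathbb{Z}\to\pi_1N\to\pi_1\Sigma\to 1$ is then a \emph{central} extension, classified by a class in $H^2(\pi_1\Sigma;\mathbb{Z})\cong H^2(\Sigma;\mathbb{Z})\cong\mathbb{Z}$; this class is precisely the Euler number $e(N)$ of the bundle, and the extension splits if and only if it vanishes. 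Finally, since the Euler class pulls back under the covering, one has $e(N)=d\cdot e(M)$ where $d\geq 1$ is the degree of $\Sigma\to O$; therefore $e(M)=0$ if and only if $e(N)=0$ if and only if the sequence splits. As every surface cover is of this form, the stated equivalence with ``some'' surface cover follows.

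The underlying computations are entirely standard in the theory of Seifert fibred spaces, so I expect no serious obstacle. The one point requiring care is the identification in part (2) of the extension class of $1\to\mathbb{Z}\to\pi_1N\to\pi_1\Sigma\to 1$ with the geometric Euler number $e(N)$, together with its multiplicativity under the cover $\Sigma\to O$; both are contained in Scott's account. The only mild subtlety is ensuring that $\Sigma$ may be taken to be a manifold rather than an orbifold, which is exactly the goodness of $O$ and is what forces $N\to\Sigma$ to be an honest circle bundle.
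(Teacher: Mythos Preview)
Your proposal is correct and takes essentially the same approach as the paper: the paper offers no proof beyond the sentence ``The following lemma can be deduced from \cite{scott_geometries_1983},'' and your argument is exactly such a deduction, reading off both parts from Table~4.1 of Scott together with the identification of the extension class with the Euler number and its multiplicativity under covers. Your write-up simply makes explicit what the paper leaves to the reader.
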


Finally, we explain how the splitting of the short exact sequence
\[
1\to Z\to\pi_1N\to\pi_1\Sigma\to 1
\]
is detected by the profinite completion.

\begin{lemma}\label{lem: Split sequences}
Let $\Sigma$ be a compact, orientable surface.  Then a short exact sequence
\begin{equation}\label{eqn: SF sequence}
1\to \Z\to G\to \pi_1\Sigma\to 1
\end{equation}
splits if and only if the induced exact sequence of profinite completions
\begin{equation}\label{eqn: Profinite SF sequence}
1\to \wh{\Z}\to \wh{G}\to \wh{\pi_1\Sigma}\to 1
\end{equation}
splits.
\end{lemma}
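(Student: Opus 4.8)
The plan is to handle the two implications separately; the forward direction is formal, and the real content lies in the converse. First I would dispose of the degenerate cases: if $\Sigma$ has non-empty boundary or is a sphere, then $\pi_1\Sigma$ is free, so $H^2(\pi_1\Sigma;-)$ vanishes and both \eqref{eqn: SF sequence} and \eqref{eqn: Profinite SF sequence} split automatically. Hence I may assume $\Sigma$ is closed of genus $g\ge 1$, so that $Q:=\pi_1\Sigma$ is a good Poincar\'e-duality group of cohomological dimension $2$. Moreover $\Z$ is central here, since it arises as the fibre subgroup of an orientable Seifert fibration over an orientable base, so I treat \eqref{eqn: SF sequence} as a \emph{central} extension; the same then holds for \eqref{eqn: Profinite SF sequence}.

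For the easy direction, if $s:\pi_1\Sigma\to G$ is a section of \eqref{eqn: SF sequence}, then applying the profinite completion functor yields $\wh{s}:\wh{\pi_1\Sigma}\to\wh{G}$; by functoriality its composite with the completed projection $\wh{G}\to\wh{\pi_1\Sigma}$ is the identity, so $\wh{s}$ splits \eqref{eqn: Profinite SF sequence}.

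For the converse I would argue cohomologically. A central extension of $Q$ by $\Z$ is classified by a class $e\in H^2(Q;\Z)$, and it splits if and only if $e=0$; likewise the profinite central extension \eqref{eqn: Profinite SF sequence} is classified by a continuous class $\hat{e}\in H^2_{\mathrm{cont}}(\wh{Q};\wh{\Z})$, and splits if and only if $\hat{e}=0$. By naturality of profinite completion, $\hat{e}$ is the image of $e$ under the comparison homomorphism
\[
c\colon H^2(Q;\Z)\longrightarrow H^2_{\mathrm{cont}}(\wh{Q};\wh{\Z}).
\]
It therefore suffices to prove that $c$ is injective. To compute $c$ I would use that surface groups are good (this follows from Proposition \ref{prop: VS good} for $g\ge 2$, and directly from $Q\cong\Z^2$ for $g=1$). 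Goodness gives $H^2_{\mathrm{cont}}(\wh{Q};\Z/n)\cong H^2(Q;\Z/n)$ for every $n$, and since $\wh{\Z}=\varprojlim_n\Z/n$, continuous cohomology with these finite coefficients identifies $H^2_{\mathrm{cont}}(\wh{Q};\wh{\Z})$ with $\varprojlim_n H^2(Q;\Z/n)$. Because $Q$ has cohomological dimension $2$, we have $H^3(Q;\Z)=0$, so the universal coefficient theorem gives $H^2(Q;\Z/n)\cong H^2(Q;\Z)\otimes\Z/n\cong\Z/n$, with the reduction maps $H^2(Q;\Z)\to H^2(Q;\Z/n)$ corresponding to reduction modulo $n$. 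Passing to the inverse limit identifies $c$ with the canonical embedding $\Z\hookrightarrow\wh{\Z}$, which is injective; hence $\hat{e}=0$ forces $e=0$, and \eqref{eqn: SF sequence} splits.

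The main obstacle is this converse, and within it the crux is the identification of $c$ with the integers-into-profinite embedding. Two points require care: one must invoke goodness of $Q$ to replace the continuous cohomology of $\wh{Q}$ by ordinary cohomology of $Q$, and one must verify naturality in order to know that the class of the completed extension is precisely $c(e)$ rather than some unrelated element. Concretely, $e$ is the Euler number of the Seifert fibration, and the whole statement amounts to the elementary but decisive fact that a non-zero integral Euler number remains non-zero in $\wh{\Z}$, i.e.\ the injectivity of $\Z\hookrightarrow\wh{\Z}$.
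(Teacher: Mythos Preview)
Your proof is correct and follows essentially the same strategy as the paper: reduce the extension modulo $n$, invoke goodness of surface groups to transport the non-splitting to the profinite setting, and use that a non-zero integer survives modulo some $n$. The paper does this by picking a single $n$ for which the mod-$n$ extension fails to split and then applying goodness directly, whereas you package the same computation as the injectivity of the comparison map $H^2(Q;\Z)\to H^2_{\mathrm{cont}}(\wh{Q};\wh{\Z})\cong\varprojlim_n H^2(Q;\Z/n)$, identified with $\Z\hookrightarrow\wh{\Z}$. Your treatment is slightly more systematic (you handle the degenerate free cases explicitly and make the centrality assumption visible), but the mathematical content is the same.
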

\begin{proof}
If (\ref{eqn: SF sequence}) splits then clearly so does (\ref{eqn: Profinite SF sequence}). Suppose therefore that (\ref{eqn: SF sequence}) does not split.

The exact sequence $1\to \Z\to \Z\ \to \Z/n\to 1$ induces a long exact sequence in cohomology, part of which is
\[
\cdots \to H^2(\pi_1\Sigma,\Z)\cong \Z\to H^2(\pi_1\Sigma,\Z/n)\to H^1(\pi_1\Sigma,\Z)\to\cdots
\]
But $H^2(\pi_1\Sigma,\Z/n)\cong\Z/n$, whereas $H^1(\pi_1\Sigma,\Z)$ is isomorphic to the abelianization of $\pi_1\Sigma$, which is torsion-free.  Therefore, the induced map $H^2(\pi_1\Sigma,\Z)\to H^2(\pi_1\Sigma,\Z/n)$ is surjective. In particular, if (\ref{eqn: SF sequence}) does not split then neither does the corresponding extension $1\to \Z/n\to G\to\pi_1\Sigma\to 1$, for some $n$.

Since $\pi_1\Sigma$ is good, we have that $H^2(\widehat{\pi_1\Sigma},\Z/n)\to H^2(\pi_1\Sigma,\Z/n)$, and so the corresponding sequence $1\to\Z/n\to\wh{G}\to\wh{\pi_1\Sigma}\to 1$ does not split.  Therefore, (\ref{eqn: Profinite SF sequence}) does not split, as required.
\end{proof}

We can summarize our results so far in the following theorem,
which asserts that the profinite completion detects the geometric
structure of a closed, orientable, irreducible 3-manifold.

\begin{theorem}\label{thm: Geometry}
Let $M,N$ be closed, orientable, irreducible 3-manifolds, and suppose that $\wh{\pi_1M}\cong\wh{\pi_1N}$.  Then $M$ admits one of Thurston's eight geometric structures if and only if $N$ does, and in this case both $M$ and $N$ admit the same geometric structure.
\end{theorem}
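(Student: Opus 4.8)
The plan is to separate Thurston's geometries into the three coarse classes that are already distinguished by the theorems proved above—hyperbolic, Seifert fibred, and Sol—and then to refine within the Seifert fibred class using Lemmas \ref{lem: orbifold geometry}, \ref{lem: Euler number} and \ref{lem: Split sequences}. The guiding principle is that any property of $\wh{\pi_1 M}$ that is phrased purely in terms of the isomorphism type of the profinite group is automatically shared by $\wh{\pi_1 N}$. I would record at the outset that no closed, orientable, irreducible $3$-manifold carries the $S^2\times\mathbb{R}$ geometry, so only seven of the eight geometries can occur.

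First I would separate the coarse classes. By Theorem \ref{thm: Profinite hyperbolization}, $M$ is hyperbolic if and only if $\wh{\pi_1 M}$ contains no copy of $\wh\Z^2$, an isomorphism invariant, so $M$ is hyperbolic iff $N$ is. By Theorem \ref{thm: Profinite Seifert conjecture}, $M$ is Seifert fibred iff $\wh{\pi_1 M}$ has a non-trivial procyclic normal subgroup, again an isomorphism invariant. A geometric irreducible manifold that is neither hyperbolic nor Seifert fibred must be a Sol-manifold, so the remaining manifolds (neither hyperbolic nor Seifert fibred) are exactly the Sol-manifolds and the non-geometric ones, which I would distinguish using Theorem \ref{thm: 3M Tits alternative}. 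A Sol-manifold has virtually soluble fundamental group, so $\wh{\pi_1 M}$ is virtually prosoluble and contains no non-abelian free pro-$p$ subgroup. For a non-geometric $M$, applying Theorem \ref{thm: 3M Tits alternative} to $H=\wh{\pi_1 M}$ shows that the absence of a free pro-$p$ subgroup would force $\wh{\pi_1 M}$ either to be the completion of a virtually soluble subgroup (hence $M$ geometric) or to be of the form $(\Z_\sigma\times\Z_\pi)\rtimes C$, whose base $\Z_\sigma\times\Z_\pi$ is a procyclic normal subgroup (hence $M$ Seifert fibred by Theorem \ref{thm: Profinite Seifert conjecture}); both contradict non-geometricity. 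Thus ``$\wh{\pi_1 M}$ contains a non-abelian free pro-$p$ subgroup for some $p$'' is an isomorphism invariant separating Sol from non-geometric within this class, and geometricity together with the three coarse classes is detected profinitely.

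It then remains to refine within the Seifert fibred case, where the geometry is determined by the pair consisting of the geometry of the base orbifold $O$ and the vanishing of the Euler number $e(M)$. The base geometry is detected by Lemma \ref{lem: orbifold geometry}: $O$ is spherical iff $\wh{\pi_1 M}$ is virtually procyclic, Euclidean iff virtually nilpotent, and hyperbolic otherwise; all three are isomorphism invariants, so the base geometries of $M$ and $N$ agree (a spherical base forcing $S^3$, since $S^2\times\mathbb{R}$ is excluded). For the Euler number I would combine Lemmas \ref{lem: Euler number} and \ref{lem: Split sequences}: when $O$ is non-spherical, $e(M)=0$ iff the central extension $1\to Z\to\pi_1 N\to\pi_1\Sigma\to 1$ for a finite surface cover $\Sigma\to O$ splits, and by Lemma \ref{lem: Split sequences} this is equivalent to the splitting of the corresponding profinite central extension. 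Reading off Scott's table \cite[Table 4.1]{scott_geometries_1983} then pins down the geometry ($\mathbb{E}^3$ versus Nil over a Euclidean base, $\mathbb{H}^2\times\mathbb{R}$ versus $\widetilde{SL_2}$ over a hyperbolic base).

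The main obstacle is this last step: to compare the splitting criterion across the isomorphism $\wh{\pi_1 M}\cong\wh{\pi_1 N}$ one must know that the profinite central extension transports canonically, that is, that the distinguished procyclic subgroup $\wh Z$ and the surface cover to which one restricts correspond on the two sides. I would resolve this by observing that, for all the geometries with infinite $\pi_1 O$, the fibre subgroup becomes the centre after passing to a suitable finite-index subgroup, and centres of corresponding open subgroups are matched by any profinite isomorphism; hence $\wh Z$ is identified canonically. Moreover the induced bijection of finite-index subgroups lets one choose finite-sheeted surface covers of $O_M$ and $O_N$ compatibly. With $\wh Z$ so identified, the splitting of the profinite extension is a genuine isomorphism invariant, and Lemma \ref{lem: Split sequences} transfers it back to the discrete extensions, which completes the proof.
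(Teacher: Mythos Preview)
Your argument is correct and follows the paper's overall architecture: separate hyperbolic via Theorem~\ref{thm: Profinite hyperbolization}, Seifert fibred via Theorem~\ref{thm: Profinite Seifert conjecture}, and then refine the Seifert fibred geometries using Lemmas~\ref{lem: orbifold geometry}--\ref{lem: Split sequences}. There are two points where you diverge from the paper, both worth noting.

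First, for distinguishing Sol from non-geometric, the paper takes a shorter path: among closed orientable irreducible $3$-manifolds, $M$ admits Sol geometry if and only if $\pi_1M$ is solvable but not virtually nilpotent, and both properties transfer to and from the profinite completion because $\pi_1M$ is residually finite. Your detour through Theorem~\ref{thm: 3M Tits alternative} reaches the same conclusion, but is heavier than necessary (and you should note that in case~(2) the base $\Z_\sigma\times\Z_\pi$ might be trivial, in which case $\wh{\pi_1M}$ itself is procyclic and the contradiction still follows from Theorem~\ref{thm: Profinite Seifert conjecture}).

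Second, you explicitly address the canonical identification of the fibre subgroup $\wh{Z}$ across the isomorphism $\wh{\pi_1M}\cong\wh{\pi_1N}$, a point the paper's brief proof does not spell out. Your resolution via centres of suitable open subgroups is the right idea and works cleanly in the hyperbolic-base case, which is the only case where the splitting criterion of Lemmas~\ref{lem: Euler number} and~\ref{lem: Split sequences} is genuinely needed: over a Euclidean base the geometries $\mathbb{E}^3$ and Nil are already separated by whether $\wh{\pi_1M}$ is virtually abelian, so the difficulty you flag with identifying $\wh{Z}$ when $\wh{\pi_1\Sigma}=\wh{\Z}^2$ has large centre does not actually arise.
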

\begin{proof}
By Theorem \ref{thm: Profinite hyperbolization}, $M$ is hyperbolic if and only if $N$ is.  Since 3-manifold groups are residually finite and admit Sol geometry if and only if they are solvable but not virtually nilpotent, it follows that $M$ admits Sol geometry if and only if $N$ does.  Finally, by Theorem \ref{thm: Profinite Seifert conjecture}, $M$ is Seifert fibred if and only if $N$ is, and by Lemmas \ref{lem: orbifold geometry}, \ref{lem: Euler number} and \ref{lem: Split sequences}, they are of the same geometric type.
\end{proof}

\section{3-manifold groups}

In this section, we complete the proof of Theorem \ref{thm: 3M Tits alternative}.  We start by addressing the cusped hyperbolic case, and then deal with the Seifert fibred case, before concluding with the irreducible non-geometric and reducible cases.

\subsection{The cusped case}\label{subsec: Cusp}

We need to generalize the results of the previous section to the case of non-compact hyperbolic 3-manifolds of finite volume.  As in Corollary 14.16 of \cite{wise_structure_2012}, the strategy is to cut along a family of surfaces, in such a way that the pieces are hyperbolic without cusps.  We next exhibit the family of surfaces that we will need (cf.\ \cite[Corollary 14.16]{wise_structure_2012}).

\begin{theorem}\label{thm: Cusped hierarchy}
A hyperbolic 3-manifold $M$ with cusps has a finite-sheeted covering space $N\to M$ that contains a disjoint family of connected, geometrically finite, incompressible subsurfaces $\{\Sigma_1,\ldots,\Sigma_n\}$ such that:
\begin{enumerate}
\item each cusp of $N$ contains a boundary component of some $\Sigma_i$;
\item each $\pi_1\Sigma_i\subseteq \pi_1 N$ is relatively malnormal.
\end{enumerate}
\end{theorem}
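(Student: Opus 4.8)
The plan is to build the family $\{\Sigma_i\}$ in three stages: first produce immersed geometrically finite surfaces whose boundary curves together enter every cusp of $M$; then pass to a finite cover in which these surfaces lift to pairwise disjoint embeddings; and finally pass to a further finite cover in which each surface subgroup becomes relatively malnormal. Throughout I would use that $\pi_1 M$ is toral relatively hyperbolic, relative to the cusp subgroups $P_i\cong\Z^2$, and virtually compact special, so that its relatively quasiconvex subgroups are separable and double-coset separable; this is the relatively hyperbolic output of Wise's work \cite{wise_structure_2012} that already underlies Theorems \ref{thm: RHMSQT} and \ref{thm: RHAGM}.

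For the first stage, I would invoke the existence of enough immersed quasi-Fuchsian (equivalently, relatively quasiconvex and geometrically finite) incompressible surfaces in $M$. For each cusp one can find an immersed, geometrically finite, incompressible surface with at least one boundary component running into that cusp; taking the union over all cusps yields a finite family $\{\Sigma_i'\to M\}$ satisfying condition (1) at the level of immersions. The source of these surfaces is the cusped analogue of the Kahn--Markovic surface subgroup theorem \cite{kahn_immersing_2012} together with the cubulation of cusped hyperbolic $3$-manifolds, which is also what supplies the relatively quasiconvex hierarchy making $\pi_1 M$ virtually special.

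For the second stage, I would use separability to promote the immersions to embeddings and then to disjointify. Since each $\pi_1\Sigma_i'$ is relatively quasiconvex, hence separable, Scott's separability criterion produces a finite cover to which each $\Sigma_i'$ lifts as an embedded surface; double-coset separability of these subgroups then allows a further finite cover in which distinct lifts, and lifts of distinct $\Sigma_i'$, are pairwise disjoint. Replacing the surfaces by their connected components and discarding redundant ones gives an embedded, disjoint, connected family still meeting every cusp. Geometric finiteness and incompressibility are inherited by elevations to finite covers, so these properties persist.

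For the final stage, I would arrange relative malnormality. Each $\pi_1\Sigma_i$ is relatively quasiconvex, so by the relatively hyperbolic analogue of Theorem \ref{thm: GMRS} it has finite relative width; being separable, it can be made relatively malnormal after passing to a finite-index subgroup, by the relative version of Lemma \ref{lem: Virtually malnormal}. Intersecting the corresponding finite-index subgroups over all $i$ and passing to the associated finite cover $N\to M$ yields condition (2), while condition (1) is preserved since it only requires that the boundary curves continue to enter every cusp. The main obstacle is twofold: producing surfaces that are \emph{geometrically finite} yet still reach into the cusps (a virtual fibre surface meets every cusp but is normal, hence never malnormal, whereas a closed Kahn--Markovic surface is geometrically finite but misses the cusps), and carrying out the width and malnormality arguments in the toral relatively hyperbolic setting rather than the word-hyperbolic setting in which Theorem \ref{thm: GMRS} and Lemma \ref{lem: Virtually malnormal} are stated.
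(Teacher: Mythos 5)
Your stage 3 (finite relative width plus separability yields relative malnormality after passing to a finite cover) is essentially the paper's closing argument, except that the paper deduces the finite-width statement directly from geometric finiteness rather than from a relative analogue of Theorem \ref{thm: GMRS}. But your stages 1 and 2 contain genuine gaps, and the first one is fatal. The surfaces you need---geometrically finite, incompressible, with boundary entering every cusp---are precisely what your cited sources cannot supply: the Kahn--Markovic theorem \cite{kahn_immersing_2012} and its cusped analogue produce \emph{closed} immersed surfaces, which have no boundary components at all and so can never satisfy condition (1); and the cubulation of cusped hyperbolic 3-manifolds (Theorem \ref{thm: Wise cusped}) is itself obtained by Wise from a hierarchy of exactly the kind this theorem asserts (cf.\ Corollary 14.16 of \cite{wise_structure_2012}), so invoking it here is circular. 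You explicitly flag this as ``the main obstacle'' but leave it unresolved---yet resolving it is the actual content of the theorem. The paper's solution is elementary and entirely different: by the ``Half Lives, Half Dies'' lemma there is a homomorphism $\pi_1M\to\Z$ which is non-trivial on every peripheral subgroup; realizing it by a smooth map $M\to S^1$, the preimage of a generic point is an \emph{embedded} (possibly disconnected) surface meeting every cusp, which is compressed to become incompressible, and a Thurston-norm argument in a finite-sheeted cover ensures that each component is geometrically finite rather than a virtual fibre. This single construction simultaneously handles the fibre/geometric-finiteness tension you worry about at the end.

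Your stage 2 is also not a valid argument as written. Scott's criterion, via tameness of the cover corresponding to a geometrically finite surface subgroup, can upgrade an immersion to an embedding in a finite cover; but double-coset separability does not make distinct surfaces \emph{disjoint}. Disjointness is a geometric, not an algebraic, condition: for instance, if two of your surfaces have boundary curves of different slopes on the same cusp torus of a cover, those elevations must intersect, no matter how deep a further finite cover you take, and no separability statement about the subgroups $\pi_1\Sigma_i$ and their double cosets changes this. The paper never faces this problem, because its family $\{\Sigma_i\}$ consists of the components of a single level set of one map to $S^1$, hence is embedded and pairwise disjoint by construction. If you wish to keep an immersed-surfaces-first strategy, you would need to replace stage 2 with genuine geometric input (cut-and-paste, least-area representatives, or a careful choice of elevations whose boundaries land on distinct cusp lifts); the dual-surface construction avoids all of this and delivers embeddedness, disjointness, and condition (1) at once.
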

\begin{proof}
By the `Half Lives, Half Dies' Lemma, there is a homomorphism $\pi_1 M\to\mathbb{Z}$ such that each peripheral subgroup maps non-trivially.  One can realize this by a smooth map $M\to S^1$, and the preimage of a generic point is a (possibly disconnected) surface that cuts every cusp, which can be compressed to obtain an incompressible surface with the same property.  Passing to a finite-sheeted cover and applying an argument with the Thurston norm, one can further ensure that each component is geometrically finite.

Let $\Sigma$ be such a component.  Geometrical finiteness implies that there are finitely many double cosets $\pi_1\Sigma g_i\pi_1\Sigma$ such that the intersection $\pi_1\Sigma^{g_i}\cap \pi_1\Sigma$ is non-peripheral. By subgroup separability, we can pass to a finite-sheeted covering space to which $\Sigma_i$ lifts but the elements $g_i$ do not.  Doing this for every component, and passing to a deeper regular covering space (still of finite index), we obtain the required covering space.
\end{proof}

Wise used a similar result to prove that cusped hyperbolic manifolds are virtually compact special \cite[Theorem 14.29]{wise_structure_2012}.  We state this result here for future use.

\begin{theorem}[Wise]\label{thm: Wise cusped}
Let $M$ be a cusped hyperbolic 3-manifold of finite volume.  Then $\pi_1M$ is virtually compact special.
\end{theorem}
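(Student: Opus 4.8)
The plan is to recognize this as one of the deepest inputs to the paper, established by Wise \cite{wise_structure_2012}, and to combine two separate ingredients: a \emph{cubulation} of $\pi_1M$, and a reduction of the resulting relatively hyperbolic problem to the closed, word-hyperbolic case by combinatorial Dehn filling. First I would record that $\pi_1M$ is toral relatively hyperbolic, with the rank-two cusp subgroups forming the peripheral structure; this is the setting in which Theorem \ref{thm: RHAGM} operates.

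The first substantive step is to cubulate. The incompressible surfaces produced in Theorem \ref{thm: Cusped hierarchy}, together with the immersed quasi-Fuchsian surfaces of Kahn--Markovic \cite{kahn_immersing_2012} adapted to the cusped setting, supply an abundant family of relatively quasiconvex codimension-one subgroups. Regarding each as a wall, I would form the associated wallspace and its dual CAT(0) cube complex $\widetilde X$, on which $\pi_1M$ acts. The separation properties of the surfaces---in particular that the surfaces cut every cusp, as guaranteed by item (1) of Theorem \ref{thm: Cusped hierarchy}---make this action proper, and a finiteness argument controlling the walls that cross a compact core yields cocompactness. Thus $\pi_1M$ acts properly and cocompactly on $\widetilde X$.

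It then remains to pass from a cocompact cubulation to virtual specialness, and here I would Dehn fill to reach the closed, word-hyperbolic world where Agol's theorem applies. By Theorem \ref{thm: RHAGM} (Groves--Manning) one can choose deep fillings $\eta:\pi_1M\to\Delta$ so that $\Delta$ is word-hyperbolic and the images of the surface subgroups remain quasiconvex and almost malnormal; the filled group $\Delta$ inherits a cocompact action on a CAT(0) cube complex, and is therefore virtually special by Agol's theorem \cite{agol_virtual_2013} (cf.\ Theorem \ref{thm: VH conjecture}). Wise's relative special-combination machinery then promotes the virtual specialness of sufficiently deep fillings back up to $\pi_1M$ itself.

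The crux, and the step I expect to be genuinely hard, is exactly this final promotion, together with the cocompactness of the cubulation in the presence of cusps: ensuring that finitely many walls give a cocompact dual complex is delicate once $M$ is non-compact, and transferring specialness through the fillings is the technical heart of Wise's relatively hyperbolic argument. Rather than reconstruct that theory, in practice I would simply invoke \cite[Theorem 14.29]{wise_structure_2012}, which is how the result enters the paper.
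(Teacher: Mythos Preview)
The paper does not prove this theorem at all: it is stated as a black-box input from Wise and cited as \cite[Theorem 14.29]{wise_structure_2012}. Your final sentence arrives at exactly the same citation, so at the level of what the paper actually does, your proposal and the paper agree.

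That said, the sketch you offer of how Wise's argument runs is not accurate, and it is worth flagging since you present it as the intended route. First, Kahn--Markovic surfaces play no role in the cusped case: the codimension-one subgroups Wise uses are the classical incompressible surfaces coming from 3-manifold topology (as in Theorem~\ref{thm: Cusped hierarchy}), not quasi-Fuchsian surfaces built by mixing. Second, Wise's proof predates and does not invoke Agol's theorem. The logic is the reverse of what you describe: Wise's main theorem in \cite{wise_structure_2012} asserts directly that a (relatively) hyperbolic group with a quasiconvex hierarchy is virtually compact special, and the cusped manifold has such a hierarchy because cutting along the surfaces of Theorem~\ref{thm: Cusped hierarchy} produces acylindrical pieces with word-hyperbolic fundamental groups that are Haken and hence themselves have quasiconvex hierarchies. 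The Dehn-filling and Malnormal Special Quotient machinery enter inside the proof of Wise's hierarchy theorem, not as a device to reduce to Agol and then ``promote back''. Indeed, your step of transferring virtual specialness from arbitrarily deep fillings $\Delta$ back to $\pi_1M$ is not a known manoeuvre and would require exactly the theory you are trying to establish.
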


Cutting along the family of surfaces given by Theorem \ref{thm: Cusped hierarchy} produces a graph-of-groups decomposition $(\G,\Delta)$ for $\pi_1N$, with the property that every vertex and edge group is hyperbolic and virtually special, and the stabilizer of any infinite subtree of the Bass--Serre tree is cyclic.  Note also that $(\G,\Delta)$ is efficient.  Passing to the corresponding graph of profinite groups $(\bar{\G},\Delta)$, it follows from Theorem \ref{thm: Profinite relative malnormality} that the stabilizer of any infinite subtree of the standard profinite tree is pro-cyclic (possibly trivial).

We next state our main theorem in the cusped case.

\begin{theorem}\label{thm: Cusped case}
Let $M$ be a cusped hyperbolic 3-manifold, $\pi_1M$ its fundamental group and $\widehat{\pi_1M}$ its profinite completion. If $H$ is a closed subgroup of $\wh{\pi_1M}$ that does not contain a free non-abelian pro-$p$ subgroup for any $p$ then $H$ is  isomorphic to $\Z_\pi\rtimes\Z_\rho$. Furthermore, if $H$ is not projective (i.e. $\pi\cap \rho\neq \emptyset$)  then $H$ is conjugate into the closure of a cusp subgroup of $\pi_1M$.
\end{theorem}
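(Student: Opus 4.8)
The plan is to reduce to a regular finite cover and then run the profinite Tits alternative of Theorem~\ref{profinite} on the tree coming from the cusped hierarchy. First I would invoke Theorem~\ref{thm: Cusped hierarchy}, passing to a regular finite cover $N\to M$ so that $\pi_1 N\trianglelefteq\pi_1 M$, and cutting along the surfaces $\Sigma_i$ to realise $\widehat{\pi_1 N}=\Pi_1(\bar\G,\Delta)$ acting on the standard profinite tree $S$, with every vertex and edge group hyperbolic virtually special and --- crucially, by Theorem~\ref{thm: Profinite relative malnormality} --- the pointwise stabiliser of any infinite subtree procyclic and conjugate into the closure of a cusp subgroup. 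I set $H_0=H\cap\widehat{\pi_1 N}$, which is open and normal in $H$ and still contains no non-abelian free pro-$p$ subgroup, and I note that $\widehat{\pi_1 M}$ and $\widehat{\pi_1 N}$ are torsion-free, since $3$-manifold groups are good (cf.\ Proposition~\ref{prop: VS good} and the discussion preceding Lemma~\ref{lem: Peripheral profinite malnormality}).

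Next I would apply Theorem~\ref{profinite} to the action of $H_0$ on $S$. If $H_0$ fixes a vertex, then $H_0$ is contained in a conjugate of the completion of a vertex group, which is hyperbolic and virtually special, so Theorem~\ref{thm: Hyp special} gives $H_0\cong\Z_\pi\rtimes\Z_\rho$ with $\pi\cap\rho=\varnothing$; in particular $H_0$ is projective. Otherwise $H_0$ preserves a unique infinite minimal subtree $D$, and, writing $K$ for the kernel of the action on $D$ and $L=H_0/K$, Theorem~\ref{profinite} presents $L$ in one of its three soluble forms. Since $K$ fixes $D$ pointwise and $D$ is infinite, $K$ lies in the pointwise stabiliser of an infinite subtree, and is therefore procyclic.

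The two remaining subcases are then dispatched as follows. If $K=1$ then $H_0=L$ is one of the three forms of Theorem~\ref{profinite}; as $H_0$ is torsion-free the dihedral and Frobenius cases (which contain $C_2$, resp.\ $C_n$) are impossible, leaving $H_0\cong\Z_\pi\rtimes\Z_\rho$ with $\pi\cap\rho=\varnothing$, again projective. If $K\neq1$ then, after conjugating, I may assume $K$ lies in the closure $\overline{Q}$ of a cusp subgroup of $\pi_1 N$; since $K\trianglelefteq H_0$, every $h\in H_0$ satisfies $1\neq K=K^{h}\subseteq\overline{Q}\cap\overline{Q}^{h}$, so the profinite malnormality of the cusps (Lemma~\ref{lem: Peripheral profinite malnormality}) forces $h\in\overline{Q}$, whence $H_0\subseteq\overline{Q}\cong\widehat{\Z}^2$. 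Being a closed subgroup of $\widehat{\Z}^2$, such an $H_0$ is of the form $\Z_\pi\times\Z_\rho$ and is conjugate into a cusp.

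Finally I would promote these statements from $H_0$ to $H$. In the projective cases the last sentence of Remark~\ref{cases} applies to the torsion-free group $H$ containing $H_0$ as an open subgroup and yields $H\cong\Z_{\pi'}\rtimes\Z_{\rho'}$ of the same projective type, so the ``furthermore'' clause is vacuous. In the cusp case, $H_0\subseteq\overline{Q}$ sits inside the closure $\overline{Q}_M$ of a cusp subgroup of $\pi_1 M$; since $H_0\trianglelefteq H$ and $H_0\neq1$, the same normaliser argument --- now using Lemma~\ref{lem: Peripheral profinite malnormality} for $M$ --- gives $H\subseteq\overline{Q}_M$, so $H\cong\Z_\pi\rtimes\Z_\rho$ is conjugate into a cusp of $\pi_1 M$, non-projective precisely when some $\Z_p^2$ occurs. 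The main obstacle is the identification of the kernel $K$: one must know that a non-trivial procyclic subgroup fixing an infinite subtree is genuinely peripheral, and this is exactly where the relative malnormality of the cutting surfaces (Theorem~\ref{thm: Profinite relative malnormality}) and the profinite malnormality of the cusps (Lemma~\ref{lem: Peripheral profinite malnormality}) do the essential work.
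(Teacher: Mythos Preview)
Your argument is correct and follows essentially the same route as the paper: reduce to the finite cover $N$ given by Theorem~\ref{thm: Cusped hierarchy}, analyse $H_0=H\cap\widehat{\pi_1N}$ via Theorem~\ref{profinite} on the profinite tree for the cutting hierarchy, use Theorem~\ref{thm: Hyp special} on vertex stabilisers, and in the non-projective case invoke the profinite malnormality of cusps (Lemma~\ref{lem: Peripheral profinite malnormality}) to trap $H$ in a peripheral closure. Your version is in fact slightly more careful than the paper's: by taking $N$ regular you make $H_0\trianglelefteq H$, so the normaliser argument carrying the conclusion from $H_0$ up to $H$ is fully justified, whereas the paper passes directly from ``$K$ lies in a cusp closure'' to ``$K$ is central in $H$'' without making this step explicit.
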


\begin{proof} Put $H_0=\widehat{\pi_1N}\cap H$, where
$N$ is the cover from  Theorem \ref{thm: Cusped hierarchy}. We first prove that $H_0$ has the claimed semi-direct product structure.  Consider the action of $H_0$ on the standard profinite tree.  By Theorem \ref{thm: Hyp special} the stabilizers of vertices in $H_0$ have the claimed structure.  Since the stabilizers of infinite subtrees are pro-cyclic, the kernel $K$ of the action is torsion-free pro-cyclic.

Suppose first that $K$ is non-trivial.  The closure of any peripheral subgroup $\widehat P_i$ is malnormal in $\widehat{\pi_1M}$ by {Lemma \ref{lem: Peripheral profinite malnormality}}, and since up to conjugation $K$ is in $\widehat P_i$, we deduce that $K$ is central in $H$ and $H$ is abelian. So $H$ is a subgroup of $\widehat \Z \times \widehat \Z$ in this case.

Alternatively, if $K$ is trivial then $H_0$ is isomorphic to a soluble projective group $\Z_\pi\rtimes\Z_\rho$ by Theorem \ref{profinite}.  Observe now that $\widehat{\pi_1M}$ is torsion free, since $\Gamma$ is good (see Definition \ref{good} and \cite[Theorem 1.4]{grunewald_cohomological_2008} combined with Theorem  \ref{thm: Wise cusped}). So $H$ is torsion free and hence, by Remark \ref{cases}, a semi-direct product of the claimed form.

Finally, if $H$ is not projective then $K$ is necessarily non-trivial  by Theorem \ref{profinite}, and so the above argument shows that $H$ is conjugate into the closure of some $\widehat P_i$ as claimed.
\end{proof}

\subsection{Seifert fibred 3-manifolds}

In this section we prove Theorem \ref{thm: 3M Tits alternative}
when $M$ is Seifert fibred.  For many of the cases (if $M$ has
Euclidean, spherical, Nil or $S^2\times\R$ geometry), $\pi_1M$ is
virtually soluble, and there is nothing to prove.  The next
proposition addresses the remaining
geometries---$\mathbb{H}^2\times\R$ and $\widetilde{SL_2\R}$---in
which the base orbifold is hyperbolic.

\begin{proposition}\label{prop: SF Tits alternative}
Suppose that $M$ is any compact, Seifert fibred 3-manifold and
that $\pi_1M$ is not virtually soluble.  If $H$ is a closed
subgroup of $\wh{\pi_1M}$ that does not contain a free non-abelian
pro-$p$ subgroup for any $p$ then $H$  is isomorphic to
$(\Z_\sigma\times \Z_\pi)\rtimes\Z_\rho$, where
$\pi\cap\rho=\varnothing$ and $\Z_\rho$ acts on $\Z_\sigma$ by
inversion.
\end{proposition}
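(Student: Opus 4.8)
The plan is to read off the structure of $H$ directly from the Seifert fibration at the level of profinite completions. After passing to a finite-sheeted cover $N\to M$, I may assume that $N$ is an orientable circle bundle over a closed orientable hyperbolic surface $\Sigma$; the base is hyperbolic precisely because $\pi_1M$ (hence $\pi_1N$) is not virtually soluble. Then the fibre subgroup $Z\cong\Z$ is central, giving a central extension $1\to Z\to\pi_1N\to\pi_1\Sigma\to 1$. Since $\pi_1N$ is LERF and $\pi_1\Sigma$ is good, the fibre is separable and closes up to a copy of $\widehat\Z$, so that one obtains a central extension of profinite groups
\[
1\to\widehat\Z\to\widehat{\pi_1N}\to\widehat{\pi_1\Sigma}\to 1.
\]
I will recover the general case at the end by working with the finite-index subgroup $H_0=H\cap\widehat{\pi_1N}$ and tracking the $\{\pm1\}$-action of $\widehat{\pi_1M}$ on the fibre.

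Set $A=H_0\cap\widehat\Z$, a closed subgroup of $\widehat\Z$ and hence isomorphic to $\Z_\tau$ for some set of primes $\tau$, and let $\overline{H_0}$ be the image of $H_0$ in $\widehat{\pi_1\Sigma}$. The key step is to prove that $\overline{H_0}$ is procyclic. By Theorem \ref{thm: Hyp special} applied to the hyperbolic, virtually special, torsion-free group $\pi_1\Sigma$, the soluble subgroup $\overline{H_0}$ is isomorphic to $\Z_{\pi'}\rtimes\Z_{\rho'}$; it remains to exclude a non-trivial action and any non-procyclic abelian part. For this I use that $\pi_1\Sigma$ is CSA, so its maximal cyclic subgroups are malnormal, and Theorem \ref{thm: Malnormal closure} then shows that the closure of such a subgroup is a malnormal copy of $\widehat\Z$ in $\widehat{\pi_1\Sigma}$; in particular the centralizer of every non-trivial element is procyclic. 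If the action defining $\overline{H_0}$ were non-trivial, an element $t$ of the complement would conjugate a non-trivial $a$ of the normal factor to an element $a^{u}$ lying in the procyclic centralizer $C$ of $a$, and malnormality of $C$ would force $t\in C$, contradicting $tat^{-1}\neq a$. Hence $\overline{H_0}$ is abelian, and being abelian it is contained in a maximal abelian — therefore procyclic — subgroup, so $\overline{H_0}\cong\Z_\rho$.

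With $\overline{H_0}$ procyclic in hand, the sequence $1\to A\to H_0\to\Z_\rho\to 1$ is a central extension with procyclic quotient. Since $\Z_\rho$ is projective (of cohomological dimension one), $H^2(\Z_\rho,A)=0$ and the extension splits, giving $H_0\cong\Z_\tau\times\Z_\rho$. To recover the stated form for $H$ itself, I reintroduce the non-orientable directions: the conjugation action of $\widehat{\pi_1M}$ on the central fibre $\widehat\Z$ factors through $\widehat{\pi_1O}\to\{\pm1\}$, so the procyclic quotient acts on $A$ either trivially or by inversion, and splitting $A$ into its inverted part $\Z_\sigma$ and its centralized part $\Z_\pi$ yields $H\cong(\Z_\sigma\times\Z_\pi)\rtimes\Z_\rho$ with $\Z_\rho$ acting by inversion on $\Z_\sigma$. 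Finally, $\widehat{\pi_1M}$ is torsion-free, since these aspherical 3-manifold groups are good of finite cohomological dimension; this guarantees that the procyclic quotient contributes a torsion-free $\Z_\rho$ rather than a finite factor, and that no spurious torsion is introduced in passing from $H_0$ back up to the finite overgroup $H$. I expect the main obstacle to be the procyclicity of $\overline{H_0}$: the splitting and the identification of the action are formal, but pinning down the procyclic structure of soluble closed subgroups of $\widehat{\pi_1\Sigma}$ — going beyond the mere absence of $\widehat\Z^2$ provided by Theorem \ref{thm: Profinite hyperbolization} — genuinely requires the CSA property together with the profinite malnormality of Theorem \ref{thm: Malnormal closure}.
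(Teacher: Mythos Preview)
Your argument breaks at the claim that $\overline{H_0}$, the image of $H_0$ in $\widehat{\pi_1\Sigma}$, is procyclic. That claim is not merely unjustified; it is false. Since surface groups are LERF, any finitely generated non-abelian free subgroup $F\leq\pi_1\Sigma$ has closure equal to its full profinite completion $\widehat F$ inside $\widehat{\pi_1\Sigma}$. Now $\widehat F$ is free profinite, and every projective profinite group embeds as a closed subgroup of a free profinite group; in particular, for any primes $p,q$ with $q\mid p-1$ one has a non-abelian metacyclic group $\Z_p\rtimes\Z_q$ sitting inside $\widehat F\leq\widehat{\pi_1\Sigma}$. Such a subgroup is soluble, contains no non-abelian free pro-$\ell$ subgroup, and is not procyclic. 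So the strongest general statement about $\overline{H_0}$ is exactly what Theorem~\ref{thm: Hyp special} already gives: it is projective soluble of the form $\Z_{\pi'}\rtimes\Z_{\rho'}$ with $\pi'\cap\rho'=\varnothing$, and the action may well be non-trivial.

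The flaw in your malnormality argument is that Theorem~\ref{thm: Malnormal closure} only yields malnormality for the profinite closure of a maximal cyclic subgroup of $\pi_1\Sigma$. An arbitrary non-trivial $a\in\widehat{\pi_1\Sigma}$ need not lie in any such closure, so you have no control over its centralizer and cannot run the $C\cap C^t\neq 1$ argument. Your final step also misfires: the $\pm1$ action on the fibre acts on all of $A$ at once, so one cannot decompose $A$ into an ``inverted part'' $\Z_\sigma$ and a ``centralized part'' $\Z_\pi$. In the correct statement the factor $\Z_\pi$ does not come from the fibre at all; it is the normal piece of the projective soluble image in the base. The paper proceeds by showing $\hat p(H)$ is projective soluble (via the tree action induced by cutting $\Sigma$ along an essential curve and Theorem~\ref{profinite}), then splitting the resulting extension using projectivity of the quotient, and finally reorganizing the factors using that the action on the fibre is through $\{\pm1\}$. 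Your framework would recover this if you drop the procyclicity claim and instead use projectivity of $\overline{H_0}$ to split $1\to A\to H_0\to\overline{H_0}\to1$.
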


\begin{proof}
The fundamental group $G=\pi_1M$ fits into a short exact sequence
\[
1\to Z\to G\stackrel{p}{\to}\pi_1(O)\to 1
\]
where $Z=\langle z\rangle$ is cyclic and $O$ is a cone-type 2-orbifold (see, for instance, \cite{scott_geometries_1983}).

Since $G$ is not virtually soluble, $O$ is a hyperbolic orbifold,
which is finitely covered by an orientable hyperbolic surface
$\Sigma$; passing to a further cover if necessary, we may further
assume that $\Sigma$ admits an essential simple-closed $\gamma$
which is not boundary parallel and that $\pi_1\Sigma$ is
torsion-free.

The exact sequence above induces an exact sequence
\[
1\to \wh{Z}\to \wh{G}\stackrel{\hat{p}}{\to}\wh{\pi_1(O)}\to 1
\]
of profinite completions.  Then $\widehat{\pi_1\Sigma}$ is naturally
an open subgroup of $\widehat{\pi_1(O)}$, which pulls back to
an open subgroup $\widehat{\pi_1 N}$ of $\wh{G}$. Cutting $\Sigma$
along $\gamma$ induces a splitting of $\pi_1\Sigma$ in which the
edge group is malnormal; this in turn induces a graph of profinite
groups for $\widehat{\pi_1\Sigma}$ with procyclic malnormal edge group.

Since $\widehat{\pi_1\Sigma}$ is
torsion-free, by Theorem \ref{profinite} $\hat{p}(H)\cong \Z_{\bar
\pi}\rtimes \Z_{\bar\rho}$ is a projective soluble group with
$\bar\pi\cap \bar \rho=\varnothing$. Denote by $\delta$ the set of
primes dividing the orders of torsion elements of
$\widehat{\pi_1(O)}$. Then $\widehat Z=\Z_{\sigma}\times
\Z_{\delta}$ and $H/\Z_{\sigma}$ is a torsion-free soluble
group containing a projective group $U$. Hence  $H/\Z_{\sigma}$ is soluble projective (projective groups are groups of cohomological dimension 1 and torsion-free overgroups of finite
index preserve cohomological dimension---cf.\
\cite[Theorem 7.3.7]{ribes_profinite_2010}) and hence isomorphic to $\Z_\pi\rtimes \Z_\rho$ for
some  $\pi\cap \rho=\varnothing$ (see Exercise 7.7.8 in
\cite{ribes_profinite_2010}). Thus $H\cong \Z_{\sigma}\rtimes H/\Z_{\sigma}\cong \Z_{\sigma}\rtimes (\Z_\pi\rtimes \Z_\rho)$.
Since the action on $\widehat Z$ is induced from the action on $Z$
which is either trivial or by inversion, moving the prime $2$ from $\pi$ to
$\rho$ if necessary we can rewrite $H$ as $H\cong (\Z_{\sigma}\times \Z_\pi)\rtimes \Z_\rho$.
\end{proof}

\subsection{Irreducible 3-manifolds}

We next consider the general case in which $M$ is an arbitrary closed, orientable, irreducible 3-manifold.  The only remaining case to consider is when $M$ has non-trivial torus decomposition. Our previous results give an 4-acylindrical action on a profinite tree, and the argument will now go through as before.

\begin{proposition}\label{prop: Non-trivial JSJ alternative}
Let $M$ be a compact, orientable, irreducible 3-manifold whose
torus decomposition is non-trivial. If $H$ is a closed subgroup of
$\wh{\pi_1M}$ that does not contain a free non-abelian pro-$p$
subgroup for any $p$ then $H$ is isomorphic to $(\Z_\sigma\times
\Z_\pi)\rtimes\Z_\rho$, where $\Z_\rho$ acts on $\Z_\sigma$ by
inversion and $\pi\cap \rho=\emptyset$.
\end{proposition}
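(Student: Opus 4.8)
The plan is to run the same tree-theoretic argument used in the cusped and Seifert cases, but now with the profinite tree coming from the torus (JSJ) decomposition of $M$. Since the torus decomposition is non-trivial, $\pi_1M$ is the fundamental group of a finite, efficient graph of groups $(\G,\Delta)$ whose edge groups are the copies of $\Z^2$ carried by the JSJ tori and whose vertex groups are the fundamental groups of the Seifert fibred and cusped hyperbolic pieces. Passing to the associated graph of profinite groups $(\bar\G,\Delta)$, the profinite completion $\wh{\pi_1M}=\Pi_1(\bar\G,\Delta)$ acts on the standard profinite tree $S$, and by the results of \cite{wilton_profinite_2010,hamilton_separability_2013} this action is $k$-acylindrical (in fact one may take $k=4$). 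Efficiency guarantees that the vertex stabilisers of $S$ are exactly the conjugates of the profinite completions $\wh{\G(v)}$ of the vertex groups. Finally, since $3$-manifold groups are torsion-free and good, $\wh{\pi_1M}$---and hence every closed subgroup, in particular $H$---is torsion-free.

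With this setup in place, I would restrict the action of $\wh{\pi_1M}$ on $S$ to $H$ and invoke the dichotomy of Section \ref{sec: Profinite trees}: either $H$ stabilises a vertex, or there is a unique infinite minimal $H$-invariant subtree $D$. In the first case $H$ is conjugate into some $\wh{\G(v)}$, the profinite completion of a piece of the decomposition; replacing $H$ by a conjugate we may assume $H\subseteq\wh{\G(v)}$, and $H$ still contains no free non-abelian pro-$p$ subgroup. If that piece is cusped hyperbolic, Theorem \ref{thm: Cusped case} gives $H\cong\Z_\pi\rtimes\Z_\rho$; if it is Seifert fibred and not virtually soluble, Proposition \ref{prop: SF Tits alternative} gives precisely $H\cong(\Z_\sigma\times\Z_\pi)\rtimes\Z_\rho$ with $\Z_\rho$ acting on $\Z_\sigma$ by inversion. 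Both of these are instances of the claimed form, the hyperbolic case being the case $\sigma=\varnothing$. In the second case, since the restricted action is still acylindrical and $H$ is torsion-free, Remark \ref{cases} forces the kernel of the action to be trivial and yields $H\cong\Z_\pi\rtimes\Z_\rho$ with $\pi\cap\rho=\varnothing$, again the claimed form with $\sigma=\varnothing$.

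I expect the main work to lie in two places. The essential ingredient, which I would import wholesale, is the acylindricity of the profinite JSJ tree from \cite{wilton_profinite_2010,hamilton_separability_2013}; this is exactly what makes Remark \ref{cases} applicable and what rules out a non-trivial kernel in the non-vertex-stabilising case. The one genuine gap in the argument above is that Proposition \ref{prop: SF Tits alternative} assumes the Seifert piece is not virtually soluble, whereas a non-trivial torus decomposition can contain a degenerate Seifert piece with virtually soluble fundamental group, for instance the twisted $I$-bundle over the Klein bottle. For such a vertex group the profinite completion is virtually $\wh{\Z}^2$, so every closed subgroup is virtually abelian; a short direct inspection shows that a closed subgroup containing no free non-abelian pro-$p$ subgroup is either abelian, giving $\Z_\sigma\times\Z_\pi$, or contains an element acting by inversion, giving $\Z_\sigma\rtimes\Z_\rho$, and in either case has the stated form. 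This last case analysis is routine and completes the proof.
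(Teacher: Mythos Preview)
Your argument is correct and follows essentially the same route as the paper: use the $4$-acylindrical action of $\wh{\pi_1M}$ on the profinite JSJ tree from \cite{wilton_profinite_2010,hamilton_separability_2013}, apply Proposition~\ref{prop: SF Tits alternative} and Theorem~\ref{thm: Cusped case} to vertex stabilisers, and invoke Remark~\ref{cases} in the non-elliptic case. Your explicit treatment of the virtually soluble Seifert pieces (such as the twisted $I$-bundle over the Klein bottle) is a point the paper passes over silently, so you are in fact slightly more careful here.
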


\begin{proof}
By the results of \cite{hamilton_separability_2013} and \cite{wilton_profinite_2010}, $\widehat{\pi_1M}$ acts 4-acylindrically on a profinite tree.  Therefore so does $H$, and by Proposition \ref{prop: SF Tits alternative} and  Theorem \ref{thm: Cusped case} the stabilizers of vertices in $H$ have the claimed structure. So if $H$ stabilizes a vertex we are done. Otherwise,  since the action is 4-acylindrical the kernel $K$ is trivial. Then $H$ is isomorphic to a soluble projective group $\Z_\pi\rtimes\Z_\rho$ by Remark \ref{cases} since $\widehat{\pi_1M}$ is torsion free.
\end{proof}

Since we have already dealt with the Seifert-fibred and hyperbolic
cases, and the fundamental group of a Sol-manifold is soluble,
Theorem \ref{thm: 3M Tits alternative} follows whenever $M$ is
closed, orientable and irreducible.

\subsection{Reducible 3-manifolds}\label{Reducible 3-manifolds}

To finish the proof of Theorem \ref{thm: 3M Tits alternative}, we double to obtain a retract of a closed 3-manifold $M$. Now we argue as before, using the Grushko decomposition.  After doubling, we may assume that $M$ is closed.  We then have
\[
\pi_1M\cong A_1*\ldots*A_m*B_1*\ldots*B_n*F
\]
where the $A_i$ are all finite, the $B_i$ are all as considered in the previous section, and $F$ is free.

\begin{proof}[Proof of Theorem \ref{thm: 3M Tits alternative}]
We consider the action of
\[
\widehat{\pi_1M}= \wh{A}_1\amalg\ldots\amalg \wh{A}_m\amalg \wh{B}_1\amalg \ldots \amalg \wh{B}_n\amalg \wh{F}
\]
on the standard profinite tree $S$ associated to the free
profinite product decomposition of $\widehat{\pi_1M}$. If $H$
stabilizes a vertex then $H$ is conjugate to a subgroup of a free
factor, so the structure of $H$ is described in the previous
results. Otherwise, it follows from Theorem \ref{profinite}
combined with Remark \ref{cases} that $H$ is as in  item 2  of
Theorem \ref{thm: 3M Tits alternative}.\end{proof}

\section{One-relator groups with torsion}

One-relator groups with torsion are hyperbolic, and Wise proved further that they are virtually special.  Theorem \ref{thm: Hyp special} therefore applies.  However, by carefully examining the hierarchy that Wise used in his proof, we can improve the conclusions and prove Theorem \ref{thm: 1-relator}.

\begin{proof}[Proof of Theorem \ref{thm: 1-relator}]
Every one-relator group $G$ embeds naturally into a free product $G'=G\ast \Z$ which is an HNN extension $HNN(L,M,t)$ of a simpler one-relator group $H$, where $M$ is a free subgroup generated by
subsets of the generators of the presentation of $G$ (cf.\ the
Magnus--Moldavanskii construction in Section~18b \cite{wise_structure_2012}
). The hierarchy terminates at a virtually free group of the form
$\Z/n\ast F$, where $F$ is free. We use induction on the length of this hierarchy.

If $G$ has torsion, this hierarchy is quasiconvex (see \cite[Lemma
18.8]{wise_structure_2012}) and so (at each step of the hierarchy) the
edge group $K$ has finite width by Theorem \ref{thm: GMRS}.  By
Corollary \ref{cor: Finite width closure},  $\widehat{K}$ has
finite width in $\widehat{G}'$. It follows that the stabilizer in
$\widehat{G}'$ of any infinite subtree of the corresponding
profinite standard tree $S$ is trivial.

Consider the action of $H$ on its minimal invariant profinite subtree. If $H$ stabilizes a vertex, then by induction on the length of the hierarchy we are done.  Otherwise, by Theorem \ref{profinite}, $H$ is of one of the claimed forms.  This finishes the proof.
\end{proof}

\section{Pro-$p$ subgroups}

Finally, we study the pro-$p$ subgroups of profinite groups acting acylindrically on profinite trees.  The following general theorem is the main result of this section.

\begin{theorem}\label{pro-p acting on profinite}
Let $G$ be a finitely generated pro-$p$ subgroup of a profinite group $\Gamma$ acting 1-acylindrically on a profinite tree $T$.  Then $G$ is a free pro-$p$ product of vertex stabilizers and a free pro-$p$ group.\end{theorem}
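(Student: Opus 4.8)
The plan is to prove Theorem \ref{pro-p acting on profinite} by reducing the action of the pro-$p$ group $G$ on the profinite tree $T$ to an action on a genuine pro-$p$ tree, and then invoking the pro-$p$ version of Bass--Serre theory. First I would observe that $G$ acts on $T$ through a finite quotient of the full automorphism group acting on each finite quotient tree, but the key structural input is that the stabilizers relevant to a pro-$p$ group should themselves be pro-$p$. Since $G$ is pro-$p$, for each vertex $v$ the stabilizer $G_v = G\cap \mathrm{Stab}_\Gamma(v)$ is a closed subgroup of $G$, hence pro-$p$; similarly each edge stabilizer $G_e$ is pro-$p$. The goal is therefore to realize $G$ as the pro-$p$ fundamental group of a (pro-$p$) graph of pro-$p$ groups whose vertex groups are these $G_v$.

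The crucial step is to exploit $1$-acylindricity. Because the action of $\Gamma$ is $1$-acylindrical, any non-trivial element of $G$ fixes a subtree of diameter at most $1$; in particular, the stabilizer of any edge, or of any pair of adjacent edges, is constrained. The plan is to pass to the quotient graph $G\backslash T$, which is a profinite graph, and analyze the induced pro-$p$ graph-of-groups structure. The essential claim I would establish is that, under $1$-acylindricity, the edge stabilizers that are \emph{not} contained in a vertex stabilizer in a degenerate way must in fact be trivial, so that the decomposition becomes a free pro-$p$ product of the vertex stabilizers together with a free pro-$p$ group coming from the fundamental group of the underlying quotient graph. Here I would want to invoke the structure theory of pro-$p$ groups acting on pro-$p$ trees: a pro-$p$ group acting on a pro-$p$ tree decomposes as the pro-$p$ fundamental group of the quotient graph of pro-$p$ groups (the analogue of \cite[Theorem 3.10]{zalesskii_subgroups_1988} in the pro-$p$ category), and a free pro-$p$ product is precisely what one obtains when the edge groups vanish.

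The main technical obstacle will be that $T$ is only a profinite tree, not a priori a pro-$p$ tree, so the pro-$p$ Bass--Serre machinery does not apply directly. To circumvent this I would use that $G$ is finitely generated pro-$p$ and extract from the inverse system defining $T$ a cofinal subsystem of finite $G$-quotient trees on which $G$ acts through finite $p$-groups; taking the inverse limit over this subsystem (or, equivalently, replacing $T$ by the minimal $G$-invariant subtree and then its ``pro-$p$ part'') produces a pro-$p$ tree $T_p$ on which $G$ acts, with the same stabilizers, to which the pro-$p$ theory applies. Establishing that such a $T_p$ exists and carries a faithful-enough $G$-action, while preserving $1$-acylindricity, is the delicate point: one must check that the fixed-point and minimality arguments of Section \ref{sec: Profinite trees} survive restriction to the pro-$p$ setting. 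I expect this reduction to be where the real work lies; once it is in place, the conclusion that edge stabilizers are trivial follows from $1$-acylindricity exactly as in the discrete case, and the free pro-$p$ product decomposition is then immediate from the pro-$p$ structure theorem.

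Finally, I would assemble these pieces: the quotient $G\backslash T_p$ is a finite (or profinite) graph, the vertex groups are the pro-$p$ groups $G_v$, all edge groups are trivial by the acylindricity argument, and hence $G$ is the pro-$p$ fundamental group of a graph of pro-$p$ groups with trivial edge groups. By the description of such fundamental groups this is precisely a free pro-$p$ product of the vertex stabilizers $G_v$ with a free pro-$p$ group (the latter accounting for the rank of the quotient graph), which is exactly the assertion of the theorem.
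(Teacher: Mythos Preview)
Your proposal has a genuine gap at its central step. You assert that ``the conclusion that edge stabilizers are trivial follows from $1$-acylindricity exactly as in the discrete case,'' but $1$-acylindricity says only that the fixed subtree of a non-trivial element has diameter at most $1$; it certainly permits non-trivial edge stabilizers (an element may fix a single edge together with its endpoints). What $1$-acylindricity gives is that the \emph{intersection} of stabilizers of two distinct edges is trivial. So the ``graph of groups with trivial edge groups'' picture you want does not come for free, and without it your appeal to the pro-$p$ structure theorem yields nothing.

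The paper's proof supplies exactly the missing structural ingredient. Using an auxiliary lemma (Lemma~\ref{prosoluble}), it shows that if an edge $e$ has $G_e$ strictly contained in \emph{both} adjacent vertex stabilizers $G_v,G_w$, then $\langle G_v,G_w\rangle$ surjects onto a non-trivial free prosoluble amalgam and hence cannot be pro-$p$. Thus whenever $G_e\neq 1$ it must equal one of $G_v,G_w$; combined with $1$-acylindricity this forces the locus of non-trivial edge stabilizers to consist of disjoint stars. These stars must then be \emph{collapsed} to obtain trivial edge groups, and this collapsing is performed not on $T$ itself but on a family of quotient trees $T_n=\widetilde{\Phi^n(G)}\backslash T$ coming from the Frattini filtration; one shows each $G_n$ is virtually free pro-$p$, that the star locus is closed in $T_n$, and that after collapse $G_n$ is a free pro-$p$ product of finitely many vertex stabilizers and a free factor. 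The theorem is then obtained by a careful inverse-limit argument aligning the free factors across $n$. Your proposed shortcut of extracting a ``pro-$p$ tree $T_p$'' from $T$ is not carried out in the paper and would require substantial justification; the Frattini-quotient route is how the passage to the pro-$p$ world is actually effected.
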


We start with the following.

\begin{lemma}\label{prosoluble} Let $G$ be a profinite group acting on a profinite
tree $T$  and let $e$ be an edge of $T$. Suppose the stabilizers of
the vertices $v,w$ of the edge $e$ are prosoluble and do not
coincide with $G_e$.  Then the group $H=\langle G_v,G_w\rangle$
has a free prosoluble amalgamated product $G_v\amalg_{G_e} G_w$ as
a quotient. In particular, $H$ is not pro-$p$.\end{lemma}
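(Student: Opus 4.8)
The plan is to realise $H$ as a profinite amalgam by means of the structure theory of profinite groups acting on profinite trees, and then to recover $G_v\amalg_{G_e}G_w$, formed in the category of prosoluble groups, as the maximal prosoluble quotient of $H$. First I would record that, since $G_v,G_w\subseteq H$, the stabilisers in $H$ of the vertices $v,w$ and of the edge $e$ are precisely $G_v$, $G_w$ and $G_e=G_v\cap G_w$. The first substantive step is to check that $H$ fixes no vertex of $T$: if $H$ fixed a vertex $u$, then $G_v$ would fix the geodesic $[u,v]$ and $G_w$ the geodesic $[u,w]$, and since the edge $e$ lies in $[u,v]\cup[u,w]$ a short argument with the median of $\{u,v,w\}$ shows that one of $G_v,G_w$ then fixes $e$ and hence coincides with $G_e$, contrary to hypothesis. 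Consequently $H$ acts without global fixed point on its unique minimal invariant subtree $D$, and $D/H$ is the single edge that is the image of $[v,w]$. (I would also observe that $v$ and $w$ lie in distinct $H$-orbits, so that the resulting splitting is an amalgam and not an HNN-extension; for the bipartite standard trees to which this lemma is applied, this is automatic.)

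Next I would invoke the fundamental structure theorem for profinite groups acting on profinite trees (cf.\ \cite{zalesskii_subgroups_1988,ribes_profinite_2010}): a profinite group acting on a profinite tree with quotient a single edge, and with vertex and edge stabilisers $G_v,G_w,G_e$, is canonically isomorphic to the profinite amalgamated free product $G_v\amalg_{G_e}G_w$. Applying this to the action of $H$ on $D$ gives $H\cong G_v\amalg_{G_e}G_w$ in the category of all profinite groups. Passing to maximal prosoluble quotients, and using that the maximal prosoluble quotient of a profinite amalgam is the corresponding free prosoluble amalgam (immediate from comparing the two universal properties, and here legitimate because $G_v,G_w$ are already prosoluble), the group $P:=G_v\amalg_{G_e}G_w$ formed in the category of prosoluble groups is a continuous quotient of $H$. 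This is exactly the asserted conclusion.

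Finally I would check that $P$ is not pro-$p$, which is the routine last step. Because $G_e$ is properly contained in both $G_v$ and $G_w$, the amalgam $P$ acts without global fixed point on its standard prosoluble tree, and choosing finite soluble quotients of $G_v$ and $G_w$ in which the image of $G_e$ is proper yields a continuous surjection of $P$ onto a nontrivial free prosoluble amalgam of two finite groups with proper edge group. The latter has finite soluble quotients that are not $p$-groups: when both indices are $2$ it surjects onto $\widehat{\Z}\rtimes\Z/2$, and hence onto $\Z/q\rtimes\Z/2$ for a prime $q\neq p$; otherwise it admits Frobenius-type quotients $\Z/q\rtimes\Z/p'$ (as in Theorem \ref{profinite}) of order divisible by two distinct primes. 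Thus $P$ has a finite quotient that is not a $p$-group, so $P$ is not pro-$p$; since a continuous quotient of a pro-$p$ group is pro-$p$, neither is $H$.

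The step I expect to be the main obstacle is the structural identification $H\cong G_v\amalg_{G_e}G_w$: one must verify carefully the hypotheses of the profinite structure theorem — that $H$ has no global fixed point, that the quotient graph is a single edge carrying exactly the stabilisers $G_v,G_w,G_e$, and that one is genuinely in the amalgam rather than the HNN case. By contrast, the passage to the maximal prosoluble quotient and the verification that $P$ is not pro-$p$ are comparatively routine.
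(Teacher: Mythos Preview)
Your strategy is sound and reaches the same conclusion, but the route differs from the paper's in two respects worth noting. First, a minor point: you take $D$ to be the minimal $H$-invariant subtree and assert that $D/H$ is the image of $[v,w]$, but there is no a priori reason the minimal subtree contains $e$; the paper instead sets $D=H(e\cup v\cup w)$, checks it is a connected profinite subgraph (hence a subtree by \cite[(1.15)]{zalesskii_subgroups_1988}), and uses that $H$ is generated by vertex stabilisers to deduce via \cite[Proposition~2.5]{zalesskii_subgroups_1988} that $H\backslash D$ is a tree and hence a segment rather than a loop. Second, and more substantively, you invoke a full profinite Bass--Serre structure theorem to obtain $H\cong G_v\amalg_{G_e}G_w$ in the category of \emph{all} profinite groups, and only then pass to the prosoluble quotient. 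The paper does not claim this stronger isomorphism: it first passes to the maximal prosoluble quotient $H\to H/K$, uses \cite[Proposition~2.9(b)]{zalesskii_geometric_1989} to see that $K\backslash D$ remains a profinite (indeed prosoluble simply connected) tree, and then applies the \emph{prosoluble} structure theorem from \cite[Proposition~4.4 and 5.4]{zalesskii_fundamental_1989} to $H/K$ acting on $K\backslash D$. Your route yields a stronger intermediate statement, but the reconstruction theorem it rests on is considerably deeper than the references you cite suggest (it holds because the class of all finite groups is extension-closed, but this is not in \cite{zalesskii_subgroups_1988} or \cite{ribes_profinite_2010} in the form you need); the paper's route is calibrated to exactly what is required and to what is established in the specific sources available.
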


\begin{proof}
Let $D=H(e\cup v\cup w)$. Then for any open subgroup $U$ of $H$
the quotient graph $U\backslash D$ is a finite connected quotient
graph of $D$ and since
$D=\lim\limits_{\displaystyle\longleftarrow} U\backslash D$, $D$
is a connected profinite subgraph of $T$.  Then $D$ is a profinite
tree \cite[(1.15)]{zalesskii_subgroups_1988}.  Since $H$ is
generated by the vertex stabilizers, $H\backslash D$ is a
profinite tree \cite[Proposition 2.5]{zalesskii_subgroups_1988}
and so $\{v,w\}$ maps to $H\backslash D$ injectively, i.e.\
$H\backslash D$ is isomorphic to $v\cup e \cup w$. Let
$f:H\longrightarrow H_s$ be the maximal prosoluble quotient and
let $K$ be the kernel of $f$. Then
 $K\backslash D$ is a profinite tree \cite[Proposition 2.9(b)]{zalesskii_geometric_1989} and in particular is a prosoluble simply connected graph. Therefore by the prosoluble version of
\cite[Proposition 4.4]{zalesskii_fundamental_1989} (it is remarked
in 5.4 there that it is valid in the prosoluble case) $H/K$ is
isomorphic   to the prosoluble fundamental group
$\Pi_1^s(\curlyH,H\backslash D)$ of the graph of groups $(\curlyH,H\backslash
D)$ where edge and vertex groups are the stabilizers of $e$ and
vertices $v,w$. This shows that $H/K$ is a free prosoluble
amalgamated product $G_v\amalg_{G_e}^s G_w$.
\end{proof}

We are now ready to prove the main theorem of this section.

\begin{proof}[Proof of Theorem \ref{pro-p acting on profinite}] First note that a nontrivial  stabilizer of any edge
$e$ coincides with one of its vertex stabilizers $G_v$ or $G_w$,
say $G_w$,  since otherwise by Lemma \ref{prosoluble} the
stabilizers of two vertices of  $e$ do not generate a pro-$p$
group. Moreover, if $e'$ is another edge with vertices $w$
and $u$ then $G_{e'}=\{1\}$ since
because of 1-acylindricity $G_w=G_e\cap G_e'=1$. It follows that
the connected components of the abstract subgraphs of points with
non-trivial stabilizers are at most stars. Let $\widetilde G$ be
the subgroup of $G$ generated by all vertex stabilizers of $G$.
Then by \cite[Theorem 2.6]{zalesskii_subgroups_1988} $G/\widetilde
G$ is a free pro-$p$ group and we denote its retract in $G$ by
$F_0$.

Now since $G$ is finitely generated, its Frattini series
$\Phi^n(G)$ is a fundamental system of neighbourhoods of 1. Let
$\widetilde{\Phi^n(G)}$ be  the subgroup of $\Phi^n(G)$ generated
be all vertex stabilizers of $\Phi^n(G)$. Then
$G_n=G/\widetilde{\Phi^n(G)}$  acts on a profinite tree
$T_n=\widetilde{\Phi^n(G)}\backslash T$ \cite[Proposition
2.5]{zalesskii_subgroups_1988} and
$\Phi^n(G)/\widetilde{\Phi^n(G)}$ acts freely on $T_n$ and
therefore is free pro-$p$ \cite[Theorem
2.6]{zalesskii_subgroups_1988}. Note that the vertex and edge
stabilizers of $G_n$ acting on $T_n$ are finite epimorphic images
of the corresponding vertex and edge stabilizers of $G$ acting on
$T$ and so the images in $T_n$ of edges of $T$ with trivial edge
stabilizers  have  trivial stabilizers. Therefore the (abstract)
connected components of the subgraph of points of $T_n$ with
non-trivial stabilizers in $G_n$  are still at most stars. Indeed,
let $S_n$ be the image of a star $S$ of $T$ with non-trivial edge
stabilizers and let $e_n$ be an edge not in $S_n$ having a vertex
$v_n\in S_n$ with non-trivial stabilizer. Let $v$ be a vertex of
$S$ whose image in $T_n$ is $v_n$. Then $G_v\neq \{1\}$ and there
exists an edge $e$ incident to $v$ whose image in $T_n$ is $e_n$.
But $G_e$ is trivial and so $G_{e_n}$ is trivial.

 By
\cite[Lemma 8]{herfort_virtually_2008} a virtually free pro-$p$
group has only finitely many finite subgroups up to conjugation
and so $T_n$ has only finitely many edges with non-trivial edge
stabilizers up to translation. Therefore the subgraph  $\Sigma_n$
of points with non-trivial edge stabilizers is closed in $T_n$,
i.e. is a profinite subgraph (forest) of $T_n$.  Collapsing all
connected components (stars) of $\Sigma_n$ in $T_n$, by the
Proposition on page 486 in \cite{zalesskii_open_1992} we get a pro-$p$ tree
$T_n$ on which $G_n$ acts with trivial edge stabilizers, so by
\cite[Proposition 2.4]{herfort_splitting_2013} $G_n$ is a free
pro-$p$ product
$$G_n=G/\widetilde{\Phi^n(G)}=\left(\coprod_{v} G_{nv}\right)\amalg F_{0n},$$
of representatives (chosen arbitrarily)  of the non-trivial vertex
stabilizers and of the isomorphic image $F_{0n}$ of $F_0$ in
$G_n$.

 Since $G=\lim\limits_{\displaystyle\longleftarrow} G_n$ and $G$ is
finitely generated, by choosing $n$ large enough we may assume that
the number of free factors is the same for every $m>n$, i.e.\ $v$
ranges over a finite set $V$. By
\cite[Theorem 4.2]{ribes_pro-p_2000} every finite subgroup of a free pro-$p$ product is
conjugate to a subgroup of a free factor. Therefore, the free
factors of $G_{m+1}$ are mapped onto the free factors of $G_m$ up
to conjugation. But in a free pro-$p$ product decomposition
replacing  any free factor by its conjugate does not change the
group. So, starting from $n$, we can inductively choose
$G_{m+1v}$ in such a way that its image in $G_m$ is $G_{mv}$. Let
$G_v$ be the inverse limit of $G_{mv}$. Then by
\cite[Lemma 9.1.5]{ribes_profinite_2010} $G= \coprod_{v\in V} G_v \amalg F_0$.

It remains to observe that $G_{mv}$ is a stabilizer of a vertex in
$T_m$ so the set of fixed points $T^{G_{mv}}$ is non-empty and
closed \cite[Theorem 3.7]{ribes_pro-p_2000}. Therefore, $G_v$
is the stabilizer of a non-empty set of vertices
$\lim\limits_{\displaystyle\longleftarrow} T^{G_{vm}} $. This
finishes the proof.  \end{proof}

Using  Theorem \ref{pro-p acting on profinite} we can precisely describe the finitely generated pro-$p$ subgroups of the profinite completions of torsion-free hyperbolic virtually special groups.

\begin{proof}[Proof of Theorem \ref{thm: pro-p virtually special}]
By Theorem \ref{thm: Hierarchies}, Theorem \ref{thm: Malnormal
closure} and Lemma \ref{lem: Acylindrical action}, $G$ has a
subgroup of finite index $\Gamma_0$ whose profinite completion
acts 1-acylindrically on a profinite tree. Then by Theorem
\ref{pro-p acting on profinite} $H\cap \widehat \Gamma_0$ is a
free pro-$p$ product of vertex stabilizers and a free pro-$p$
group. By induction on the hierarchy vertex stabilizers are free
pro-$p$, therefore so is $H\cap \widehat \Gamma_0$. Finally by
Serre's theorem (see Theorem 7.3.7 in \cite{ribes_profinite_2010}) $H$ is free
pro-$p$.
\end{proof}

Using the main result of \cite{herfort_virtually_2013} that describes finitely
generated virtually free pro-$p$ groups, we can deduce a
description of the finitely generated pro-$p$ subgroups of the
profinite completion of a hyperbolic virtually special group.

\begin{corollary}\label{cor:pro-p virtually special} Let $G$ be a  hyperbolic, virtually special
group.  Any finitely generated pro-$p$ subgroup $H$ of $\widehat
G$ is the fundamental pro-$p$ group of a finite graph of finite
$p$-groups.\end{corollary}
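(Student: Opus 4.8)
The plan is to reduce immediately to the torsion-free case already treated in Theorem \ref{thm: pro-p virtually special} and then invoke the structure theorem for finitely generated virtually free pro-$p$ groups from \cite{herfort_virtually_2013}. The point is that, even though $G$ may have torsion, passing to a suitable finite-index subgroup removes it, and freeness of pro-$p$ subgroups is only disturbed up to finite index.

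First I would use the definition of virtual specialness to choose a finite-index subgroup $G_0\le G$ which is the fundamental group of a compact special cube complex. Such a $G_0$ is torsion-free (it acts freely on a CAT(0) cube complex), and it is again hyperbolic and virtually special, so it satisfies all the hypotheses of Theorem \ref{thm: pro-p virtually special}. Since $G_0$ has finite index in $G$, its closure in $\widehat{G}$ is an open subgroup isomorphic to $\widehat{G}_0$. Consequently $H_0:=H\cap\widehat{G}_0$ is an open, hence finite-index, subgroup of $H$; being an open subgroup of a finitely generated profinite (indeed pro-$p$) group, $H_0$ is itself finitely generated. Thus $H_0$ is a finitely generated pro-$p$ subgroup of $\widehat{G}_0$, and Theorem \ref{thm: pro-p virtually special} applies to show that $H_0$ is free pro-$p$.

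This exhibits $H$ as a finitely generated pro-$p$ group containing the free pro-$p$ group $H_0$ with finite index; in other words, $H$ is a finitely generated virtually free pro-$p$ group. The proof then concludes by applying the main result of \cite{herfort_virtually_2013}, which asserts that every such group is isomorphic to the fundamental pro-$p$ group of a finite graph of finite $p$-groups — exactly the asserted conclusion.

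I do not expect a genuine obstacle here, since essentially all the work is carried by Theorem \ref{thm: pro-p virtually special} and the cited structure theorem. The only points requiring (routine) care are that the closure of the finite-index subgroup $G_0$ is open in $\widehat{G}$ and realizes $\widehat{G}_0$, and that $H\cap\widehat{G}_0$ remains finitely generated; both are standard facts about finitely generated profinite groups, so the argument is a short deduction rather than a new computation.
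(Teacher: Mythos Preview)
Your proposal is correct and follows essentially the same approach as the paper: pass to a torsion-free finite-index (virtually) special subgroup, apply Theorem~\ref{thm: pro-p virtually special} to the intersection, and then invoke \cite[Theorem~1.1]{herfort_virtually_2013}. The paper's proof is simply a terser version of yours, and you have supplied the routine justifications (openness of $\widehat{G}_0$ and finite generation of $H_0$) that the paper leaves implicit.
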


\begin{proof} The group $G$  contains a virtually special, torsion-free subgroup of finite
index $U$. Hence $H\cap \widehat U$ is free pro-$p$. The result
then follows from \cite[Theorem 1.1]{herfort_virtually_2013}.\end{proof}

Of course, Corollary \ref{cor:pro-p virtually special} applies to one-relator groups with torsion by the work of Wise \cite{wise_structure_2012}. However, a careful analysis gives a more refined classification in that case.

\begin{theorem}
Let $G$ be a one-relator group with torsion and $H$ a
finitely generated pro-$p$ subgroup of $\wh{G}$. Then $G$ is a
free pro-$p$ product of finite cyclic $p$-groups and a free
pro-$p$ group.
\end{theorem}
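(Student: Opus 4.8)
The plan is to deduce the refined structure from Corollary \ref{cor:pro-p virtually special} together with the special features of one-relator groups with torsion, rather than by a fresh induction on the Magnus--Moldavanskii hierarchy. Since a one-relator group $G$ with torsion is hyperbolic and, by Wise, virtually special, Corollary \ref{cor:pro-p virtually special} already shows that any finitely generated pro-$p$ subgroup $H\le\wh{G}$ is the fundamental pro-$p$ group of a finite graph of finite $p$-groups. Passing to a reduced (minimal) such decomposition, we may assume that the vertex groups are precisely the maximal finite subgroups of $H$ (every finite subgroup of $H$ being conjugate into a vertex group by \cite[Theorem 4.2]{ribes_pro-p_2000}, and edge groups being proper in the adjacent vertex groups). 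It therefore remains to prove two things: that the vertex groups are cyclic, and that all edge groups are trivial, whence $H$ is a free pro-$p$ product of cyclic $p$-groups and a free pro-$p$ group.

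First I would pin down the finite subgroups. By the classical theory of one-relator groups with torsion $G=\langle X\mid w^m\rangle$ (B.~B.~Newman), every torsion element is conjugate to a power of $w$, so the finite subgroups of $G$ are cyclic and conjugate into the maximal finite subgroup $\langle w\rangle\cong\Z/m$, which is moreover malnormal and, being finite, quasiconvex. Since $G$ is good and virtually torsion-free (Proposition \ref{prop: VS good} together with virtual specialness), every finite subgroup of $\wh{G}$ is conjugate into $W:=\wh{\langle w\rangle}\cong\Z/m$ (this is where I would invoke the control of finite subgroups of a completion afforded by goodness, cf.\ \cite{grunewald_cohomological_2008}); in particular every finite $p$-subgroup of $\wh{G}$ is a cyclic $p$-group, which handles the vertex groups. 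Applying Corollary \ref{cor: Almost malnormal closure} to the malnormal quasiconvex subgroup $\langle w\rangle$ shows that $W$ is malnormal in $\wh{G}$, so the conjugates $\{W^{\hat{g}}\}$ form a malnormal family.

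The crux --- and the step I expect to be the main obstacle --- is to use this malnormality to kill the edge groups. Consider the pro-$p$ tree $S$ underlying the reduced decomposition of $H$, and suppose some edge $e$ had nontrivial stabilizer $H_e$, with distinct endpoints $v_0\ne v_1$. The vertex stabilizers $H_{v_0},H_{v_1}$ are finite $p$-subgroups of $\wh{G}$, hence lie in conjugates $W^{g_0},W^{g_1}$; since $1\ne H_e\le H_{v_0}\cap H_{v_1}\le W^{g_0}\cap W^{g_1}$, malnormality forces $W^{g_0}=W^{g_1}=:U$. Then $H_{v_0},H_{v_1}\le U\cap H$, a finite $p$-subgroup of $H$, so maximality of the vertex groups gives $H_{v_0}=U\cap H=H_{v_1}=:P$. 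But then $P$ fixes both $v_0$ and $v_1$, hence fixes the edge $e$, so $H_e=P=H_{v_0}$, contradicting the fact that edge groups are proper in vertex groups in a reduced decomposition. Hence every edge group is trivial, and a graph of profinite groups with trivial edge groups has fundamental pro-$p$ group equal to the free pro-$p$ product of its vertex groups and a free pro-$p$ group; we conclude that $H$ is a free pro-$p$ product of finitely many cyclic $p$-groups and a free pro-$p$ group, as required. The two points I would verify most carefully are that finite subgroups of $\wh{G}$ are genuinely conjugate into $W$ (rather than merely abstractly embedding into a finite quotient), and that the pro-$p$ graph-of-groups structure of \cite{herfort_virtually_2013} can indeed be taken reduced with maximal finite vertex groups, since the entire edge-killing argument rests on these two facts.
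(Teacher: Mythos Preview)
Your overall architecture is reasonable and, once the two acknowledged ``points to verify'' are established, your edge-killing argument does work. However, your justification of the malnormality of $W=\overline{\langle w\rangle}$ in $\wh{G}$ via Corollary~\ref{cor: Almost malnormal closure} is a genuine gap. That corollary is proved by intersecting the $H_i$ with a torsion-free special finite-index subgroup $G_0$ and applying Theorem~\ref{thm: Malnormal closure} there; when $H_i=\langle w\rangle$ is finite, all these intersections are trivial, so the corollary yields only the vacuous statement that $W$ is \emph{almost} malnormal. The corollary (and the canonical-completion machinery underlying it) simply does not see finite subgroups. The same problem afflicts your other acknowledged worry: goodness alone only tells you that finite subgroups of $\wh{G}$ embed into a finite quotient of $G$, not that they are \emph{conjugate} into $W$.

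Both gaps are filled by the conjugacy-separability input that the paper actually uses: torsion elements of $\wh{G}$ are conjugate into $G$ \cite{boggi_restricted_2014}, the centraliser of an element of $G$ is dense in its centraliser in $\wh{G}$ \cite{minasyan_one-relator_2013,minasyan_hereditary_2012}, and centralisers of torsion elements in $G$ are finite \cite{newman_some_1968}. From these one gets both that finite subgroups of $\wh{G}$ are conjugate into $W$ and that centralisers of non-trivial elements of $W$ equal $W$, whence $W$ is malnormal in $\wh{G}$. With this in hand your reduced-graph-of-groups argument goes through and is essentially a hands-on proof, in this special case, of the black box the paper invokes, namely \cite[Theorem~1]{herfort_virtually_2008} (virtually free pro-$p$ with finite centralisers of torsion $\Rightarrow$ free pro-$p$ product of finite cyclic $p$-groups and a free pro-$p$ group). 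So the two approaches diverge only at the last step: the paper quotes \cite{herfort_virtually_2008} after establishing finite centralisers, whereas you unpack that citation via the graph of groups from Corollary~\ref{cor:pro-p virtually special}; but they rest on the same essential input, which is not Corollary~\ref{cor: Almost malnormal closure}.
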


\begin{proof} The group $G$  contains a virtually special torsion-free subgroup of finite
index $U$. Hence every finitely generated pro-$p$ subgroup of
$\widehat U$ is free pro-$p$.

Now we show that centralizers of torsion elements of $\widehat G$
are finite. First observe that   all torsion  elements of
$\widehat G$ are conjugate to elements of $G$ (see Theorem 2.1
\cite{boggi_restricted_2014}). But by \cite[Theorem 1.1]{minasyan_one-relator_2013} combined with \cite[Proposition 3.2]{minasyan_hereditary_2012} the centralizer of any element of $G$ is dense in the centralizer of this element in $\widehat G$.
Thus since the centralizer of any torsion element of $G$ is finite
\cite[Theorem 2]{newman_some_1968}, the centralizers of torsion elements
in $\widehat G$ are finite as well.

 So any   finitely generated pro-$p$ subgroup of $\widehat G$  is virtually free and has finite centralizers of torsion elements.  Therefore by \cite[Theorem 1]{herfort_virtually_2008} it has the claimed structure.
 \end{proof}

\bibliographystyle{alpha}
%\bibliography{/Users/henrywilton/Dropbox/library}

%\iffalse

%\fi

\end{document}